\algnewcommand\algorithmicinput{\textbf{Input:}}
\algnewcommand\Input{\item[\algorithmicinput]}
\algnewcommand\algorithmicoutput{\textbf{Output:}}
\algnewcommand\Output{\item[\algorithmicoutput]}
\newcommand{\N}{\mathbb{N}}
\newcommand{\R}{\mathbb{R}}
\DeclareMathOperator{\im}{im}
\newcommand{\dist}{d}
\newcommand{\ball}{B}
\newcommand{\norm}[1]{\Vert #1 \Vert}
\newcommand{\ip}[2]{\left\langle #1, #2 \right\rangle}
\DeclareMathOperator*{\argmin}{argmin}
\DeclareMathOperator*{\argmax}{argmax}
\DeclareMathOperator*{\lip}{Lip}
\newcommand{\proj}[2]{P_{#1}(#2)}
\newcommand{\gencone}[4]{{#1}_{#2}^{#4}(#3)} 
\newcommand{\tancone}[2]{\gencone{T}{#1}{#2}{}} 
\newcommand{\norcone}[2]{\gencone{N}{#1}{#2}{}} 
\newcommand{\regnorcone}[2]{\gencone{\widehat{N}}{#1}{#2}{}} 
\newcommand{\proxnorcone}[2]{\,\hspace{1pt}\gencone{\widehat{\!\hspace{-1pt}\vphantom{\rule{1pt}{8.5pt}}\smash{\widehat{N}}\hspace{-1pt}}}{#1}{#2}{}} 
\newcommand{\oshort}[1]{\mkern 0.9mu\overline{\mkern-0.9mu#1\mkern-0.9mu}\mkern 0.9mu}
\newcommand{\ushort}[1]{\mkern 0.9mu\underline{\mkern-0.9mu#1\mkern-0.9mu}\mkern 0.9mu}
\newcommand{\sparse}[2]{\R_{\le #2}^{#1}}
\DeclareMathOperator{\supp}{supp}
\newcommand{\tp}{\top}
\DeclareMathOperator{\rank}{rank}
\newcommand{\pgd}{\mathrm{PGD}}
\newcommand{\ppgd}{\mathrm{P}^2\mathrm{GD}}
\crefname{hypothesis}{Hypothesis}{Hypotheses}
\title{Projected gradient descent accumulates at Bouligand stationary points\thanks{Submitted to the editors September 11, 2024.
\funding{This work was supported by ERC grant 786854 G-Statistics from the European Research Council under the European Union's Horizon 2020 research and innovation program and by the French government through the 3IA Côte d'Azur Investments ANR-23-IACL-0001 and the PRAIRIE 3IA Institute ANR-19-P3IA-0001, managed by the National Research Agency.}}}
\author{Guillaume Olikier\thanks{Université Côte d'Azur and Inria, Epione Project Team, 2004 route des Lucioles - BP 93, 06902 Sophia Antipolis Cedex, France
  (\email{guillaume.olikier@inria.fr}).}
\and Irène Waldspurger\thanks{CNRS, Universit\'e Paris Dauphine, équipe-projet Mokaplan (Inria), place du Maréchal de Lattre de Tassigny, 75016 Paris, France
  (\email{waldspurger@ceremade.dauphine.fr}).}
}
\DeclareMathOperator{\diag}{diag}
\begin{document}

\maketitle

\begin{abstract}
This paper considers the projected gradient descent (PGD) algorithm for the problem of minimizing a continuously differentiable function on a nonempty closed subset of a Euclidean vector space. Without further assumptions, this problem is intractable and algorithms are only expected to find a stationary point. PGD generates a sequence in the set whose accumulation points are known to be Mordukhovich stationary. In this paper, these accumulation points are proven to be Bouligand stationary, and even proximally stationary if the gradient is locally Lipschitz continuous. These are the strongest stationarity properties that can be expected for the considered problem.
\end{abstract}

\begin{keywords}
projected gradient descent, stationary point, critical point, tangent and normal cones, Clarke regularity
\end{keywords}

\begin{MSCcodes}
65K10, 49J53, 90C26, 90C30, 90C46
\end{MSCcodes}

\section{Introduction}
\label{sec:Introduction}
Let $\mathcal{E}$ be a Euclidean vector space, $C$ a nonempty closed subset of $\mathcal{E}$, and $f : \mathcal{E} \to \R$ a function satisfying at least the first of the following two properties:
\begin{enumerate}[label={(H\arabic*)}]
\item $f$ is differentiable on $C$, i.e., for every $x \in C$, there exists a (unique) vector in $\mathcal{E}$, denoted by $\nabla f(x)$, such that
\begin{equation*}
\lim_{\substack{y \to x \\ y \in \mathcal{E} \setminus \{x\}}} \frac{f(y)-f(x)-\ip{\nabla f(x)}{y-x}}{\norm{y-x}} = 0,
\end{equation*}
and $\nabla f : C \to \mathcal{E}$ is continuous;
\label{it:continuously_diff}
\item $f$ is differentiable on $\mathcal{E}$ and $\nabla f : \mathcal{E} \to \mathcal{E}$ is locally Lipschitz continuous.
\label{it:Lipschitz_diff}
\end{enumerate}
This paper considers the problem
\begin{equation}
\label{eq:MinDiffFunctionClosedSet}
\min_{x \in C} f(x)
\end{equation}
of minimizing $f$ on~$C$.
In general, without further assumptions on~$C$ or $f$, finding an exact or approximate global minimizer of problem~\eqref{eq:MinDiffFunctionClosedSet} is intractable. Even finding an approximate local minimizer is not always feasible in polynomial time (unless $\text{P} = \text{NP}$) \cite{AhmadiZhang}. Therefore, algorithms are only expected to return a point satisfying a condition called \emph{stationarity}, which is a tractable surrogate for local optimality.

A point $x \in C$ is said to be stationary for~\eqref{eq:MinDiffFunctionClosedSet} if $-\nabla f(x)$ is normal to $C$ at~$x$. Several definitions of normality exist. Each defines a different notion of stationarity, which is a surrogate for local optimality in the sense that, possibly under mild regularity assumptions on~$f$, every local minimizer of $f|_C$ is stationary for~\eqref{eq:MinDiffFunctionClosedSet}. In particular, each of the three notions of normality in \cite[Definition~6.3 and Example~6.16]{RockafellarWets}, namely normality in the general sense, in the regular sense, and in the proximal sense, yields an important definition of stationarity. The sets of general, regular, and proximal normals to $C$ at $x \in C$ are respectively denoted by $\norcone{C}{x}$, $\regnorcone{C}{x}$, and $\proxnorcone{C}{x}$. These sets are reviewed in section~\ref{subsec:NormalityStationarity}. Importantly, they are nested as follows: for every $x \in C$,
\begin{equation}
\label{eq:NestedNormalCones}
\proxnorcone{C}{x} \subseteq \regnorcone{C}{x} \subseteq \norcone{C}{x},
\end{equation}
and $C$ is said to be \emph{Clarke regular} at $x$ if the second inclusion is an equality.
The definitions of stationarity based on these sets are given in Definition~\ref{def:M/B/P-Stationarity}, and the terminology is discussed in section~\ref{sec:StationarityLiterature}.

\begin{definition}
\label{def:M/B/P-Stationarity}
For problem~\eqref{eq:MinDiffFunctionClosedSet}, a point $x \in C$ is said to be:
\begin{itemize}
\item \emph{Mordukhovich stationary (M-stationary)} if $-\nabla f(x) \in \norcone{C}{x}$;
\item \emph{Bouligand stationary (B-stationary)} if $-\nabla f(x) \in \regnorcone{C}{x}$;
\item \emph{proximally stationary (P-stationary)} if $-\nabla f(x) \in \proxnorcone{C}{x}$.
\end{itemize}
\end{definition}

There are many practical examples of a set $C$ for which at least one of the inclusions in~\eqref{eq:NestedNormalCones} is strict, especially the second one. This is illustrated by the four examples studied in section~\ref{sec:ExamplesFeasibleSets}, where the second inclusion is strict at infinitely many points. The three notions of stationarity are therefore not equivalent.

For problem~\eqref{eq:MinDiffFunctionClosedSet}, B-stationarity and P-stationarity are the strongest necessary conditions for local optimality under hypotheses \ref{it:continuously_diff} and \ref{it:Lipschitz_diff}, respectively. This is a consequence of the following gradient characterizations of the regular and proximal normal cones given in \cite[Theorem~6.11]{RockafellarWets} and Theorem~\ref{thm:GradientCharacterizationProximalNormals}: for all $x \in C$,
\begin{align}
\regnorcone{C}{x}
\label{eq:GradientCharacterizationRegularNormals}
&= \left\{-\nabla h(x) ~\Big| \begin{array}{l} h : \mathcal{E} \to \R \text{ satisfies~\ref{it:continuously_diff}},\\ x \text{ is a local minimizer of } h|_C \end{array}\right\},\\
\proxnorcone{C}{x}
\label{eq:GradientCharacterizationProximalNormals}
&= \left\{-\nabla h(x) ~\Big| \begin{array}{l} h : \mathcal{E} \to \R
\text{ satisfies~\ref{it:Lipschitz_diff}},\\
x \text{ is a local minimizer of } h|_C \end{array}\right\}.
\end{align}
The inclusion $\supseteq$ in~\eqref{eq:GradientCharacterizationRegularNormals} shows that, under \ref{it:continuously_diff}, every local minimizer of $f|_C$ is B-stationary for~\eqref{eq:MinDiffFunctionClosedSet}. Thus, under \ref{it:continuously_diff}, $\regnorcone{C}{x}$ is sufficiently large to yield a necessary condition for local optimality. The inclusion $\subseteq$ in~\eqref{eq:GradientCharacterizationRegularNormals} shows that, under \ref{it:continuously_diff}, replacing $\regnorcone{C}{x}$ with one of its proper subsets would yield a condition that is not necessary for local optimality. The two inclusions in~\eqref{eq:GradientCharacterizationProximalNormals} yield the corresponding facts for P-stationarity and the proximal normal cone under~\ref{it:Lipschitz_diff}.

In comparison, M-stationarity is a weaker notion of stationarity, which is considered unsatisfactory in \cite[\S 4]{HosseiniLukeUschmajew2019}, \cite[\S 1]{LevinKileelBoumal2023}, and \cite[\S 2.1]{Pauwels}. Furthermore, as explained in~\cite{LevinKileelBoumal2023}, distinguishing convergence to a B-stationary point from convergence to an M-stationary point is difficult (a phenomenon formalized by the notion of \emph{apocalypse} in~\cite{LevinKileelBoumal2023}) in the sense that it cannot be done based on standard measures of B-stationarity because of their possible lack of lower semicontinuity at points where the feasible set is not Clarke regular, as also explained in \cite[\S\S 1.1 and 2.4]{OlikierGallivanAbsil2024}.

Projected gradient descent, or $\pgd$ for short, is a basic algorithm aiming at solving problem~\eqref{eq:MinDiffFunctionClosedSet}. To the best of our knowledge, the first article to have considered $\pgd$ on a possibly nonconvex closed set was \cite{BeckTeboulle}. The nonmonotone backtracking version considered in this paper is defined as Algorithm~\ref{algo:PGD} and is based on \cite[Algorithm~3.1]{JiaEtAl} and \cite[Algorithm~3.1]{DeMarchi}. Given $x \in C$ as input, the iteration map of $\pgd$, called the $\pgd$ map and defined as Algorithm~\ref{algo:PGDmap}, performs a backtracking projected line search along the direction of $-\nabla f(x)$: it computes an arbitrary projection $y$ of $x-\alpha\nabla f(x)$ onto $C$ for decreasing values of the step size $\alpha \in (0, \infty)$ until $y$ satisfies an Armijo condition.
In the simplest version of $\pgd$, called \emph{monotone}, the Armijo condition ensures that the value of $f$ at the next iterate is smaller by a specified amount than the value at the current iterate. Following the general settings proposed in \cite{JiaEtAl,KanzowMehlitz} and \cite{DeMarchi}, the value at the current iterate can be replaced with the maximum value of $f$ over a prefixed number of the previous iterates (``max'' rule) or with a weighted average of the values of $f$ at the previous iterates (``average'' rule). This version of $\pgd$ is called \emph{nonmonotone}.
Monotone $\pgd$ accumulates at M-stationary points of~\eqref{eq:MinDiffFunctionClosedSet} if $f$ is continuously differentiable on~$\mathcal{E}$ and bounded from below on~$C$ \cite[Theorem~3.1]{KanzowMehlitz}. The same result holds for nonmonotone $\pgd$ with the ``average'' rule \cite[Theorem~4.6]{DeMarchi}, and also for nonmonotone $\pgd$ with the ``max'' rule if $f$ is further uniformly continuous on the sublevel set
\begin{equation}
\label{eq:InitialSublevelSet}
\{x \in C \mid f(x) \le f(x_0)\},
\end{equation}
where $x_0 \in C$ is the initial iterate given to the algorithm \cite[Theorem~4.1]{KanzowMehlitz}.
However, as pointed out in \cite[\S 1]{LevinKileelBoumal2023}, it is an open question whether the accumulation points of $\pgd$ are always B-stationary for~\eqref{eq:MinDiffFunctionClosedSet}.

This paper answers positively the question by proving Theorem~\ref{thm:StationarityAccumulationPointsPGD}.

\begin{theorem}
\label{thm:StationarityAccumulationPointsPGD}
Consider a sequence generated by $\pgd$ (Algorithm~\ref{algo:PGD}) when applied to problem~\eqref{eq:MinDiffFunctionClosedSet}.
\begin{itemize}
\item If this sequence is finite, then its last element is B-stationary for~\eqref{eq:MinDiffFunctionClosedSet} under~\ref{it:continuously_diff}, and even P-stationary for~\eqref{eq:MinDiffFunctionClosedSet} under~\ref{it:Lipschitz_diff}.
\item If this sequence is infinite, then all of its accumulation points, if any, are B-stationary for~\eqref{eq:MinDiffFunctionClosedSet} under~\ref{it:continuously_diff}, and even P-stationary for~\eqref{eq:MinDiffFunctionClosedSet} under~\ref{it:Lipschitz_diff}.
\end{itemize}
\end{theorem}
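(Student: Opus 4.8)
The plan is to work throughout with the first-order reformulation of stationarity: since $f$ is differentiable at an accumulation point $x_\ast$, one has $-\nabla f(x_\ast)\in\regnorcone{C}{x_\ast}$ if and only if $\liminf_{w\to_C x_\ast,\,w\neq x_\ast}\norm{w-x_\ast}^{-1}\ip{\nabla f(x_\ast)}{w-x_\ast}\ge 0$, and $-\nabla f(x_\ast)\in\proxnorcone{C}{x_\ast}$ if and only if $\ip{\nabla f(x_\ast)}{w-x_\ast}\ge-\tfrac{\rho}{2}\norm{w-x_\ast}^2$ for some $\rho$ and all $w\in C$ near $x_\ast$. The only elementary facts I would use are the optimality of the projection — for $y=\proj{C}{x-\alpha\nabla f(x)}$ and every $w\in C$, $\ip{\nabla f(x)}{y-x}\le\ip{\nabla f(x)}{w-x}+\tfrac{1}{2\alpha}(\norm{w-x}^2-\norm{y-x}^2)$, whence in particular $\ip{\nabla f(x)}{y-x}\le-\tfrac{1}{2\alpha}\norm{y-x}^2$ — together with, under \ref{it:Lipschitz_diff} only, the descent lemma. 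Fixing $x_\ast$ and a subsequence of iterates converging to it, the accepted step sizes along that subsequence either stay bounded away from $0$ or tend to $0$, and this dichotomy organizes the proof.

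The regimes where the step sizes do not collapse are short. If the accepted steps are bounded below by some $\underline\alpha>0$, then (using $\norm{x_k-y_k}\to 0$, which the Armijo decrease guarantees) passing to the limit in $\ip{\nabla f(x_k)}{w-y_k}\ge(2\alpha_k)^{-1}(\norm{x_k-y_k}^2-\norm{x_k-w}^2)\ge-(2\underline\alpha)^{-1}\norm{x_k-w}^2$ yields $\ip{\nabla f(x_\ast)}{w-x_\ast}\ge-(2\underline\alpha)^{-1}\norm{w-x_\ast}^2$ for every $w\in C$, i.e.\ P-stationarity, a fortiori B-stationarity. Under \ref{it:Lipschitz_diff} this is the only regime: the descent lemma forces the line search to accept a step bounded below by a constant multiple of $1/L$, so collapse cannot occur; and in the finite case, failure of the line search for all small $\alpha$ forces $\proj{C}{x_\ast-\alpha\nabla f(x_\ast)}=x_\ast$, which is exactly $-\nabla f(x_\ast)\in\proxnorcone{C}{x_\ast}$. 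This already settles the P-stationarity statements under \ref{it:Lipschitz_diff}.

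The substantial case is \ref{it:continuously_diff} with steps tending to $0$, where backtracking is active and every step larger than the accepted one was rejected. The engine is the estimate that for any rejected steps $\alpha_i\to0$ taken at base points $\tilde x_i\to x_\ast$ with projections $y_i$, one has $\norm{y_i-\tilde x_i}/\alpha_i\to0$: insert the descent inequality and the expansion $f(y_i)=f(\tilde x_i)+\ip{\nabla f(\tilde x_i)}{y_i-\tilde x_i}+o(\norm{y_i-\tilde x_i})$ into the violated Armijo condition, the remainder being \emph{uniformly} $o(\norm{y_i-\tilde x_i})$ by uniform continuity of $\nabla f$ on a compact neighborhood of $x_\ast$. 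I would then argue by contradiction: if $x_\ast$ is not B-stationary there are $\delta>0$ and $w_i\to_C x_\ast$ with $\ip{\nabla f(x_\ast)}{w_i-x_\ast}\le-\delta\norm{w_i-x_\ast}$. For each $i$, choose an iterate $x_k$ of the subsequence close enough to $x_\ast$ and a rejected step $\alpha^\ast_i$ of order $\norm{w_i-x_\ast}/\delta$ — available on the geometric backtracking grid precisely because $\alpha^\ast_i$ is far larger than the collapsing accepted step yet at most the initial one. Testing the projection inequality against the competitor $w_i$ then keeps $\norm{x_k-w_i}^2/(2\alpha^\ast_i)$ bounded by about $\tfrac{\delta}{2}\norm{w_i-x_\ast}$, so $\ip{\nabla f(x_k)}{y_i-x_k}\le-\tfrac{\delta}{3}\norm{w_i-x_\ast}$, and Cauchy–Schwarz gives $\norm{y_i-x_k}/\alpha^\ast_i\ge c_0$ for a fixed $c_0>0$; but the $\alpha^\ast_i$ are rejected steps with $\alpha^\ast_i\to0$ at base points converging to $x_\ast$, so the estimate above forces $\norm{y_i-x_k}/\alpha^\ast_i\to0$, a contradiction.

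The main obstacle is exactly this scale selection. Against a fixed competitor $w$, the projection inequality at a rejected step is useless when the step collapses, since $\norm{x-w}^2/(2\alpha)$ blows up; nor can one pass to a limit of regular normals along the rejected projections, because $\regnorcone{C}{\cdot}$ is not closed under limits of base points — that operation returns only the larger cone $\norcone{C}{x_\ast}$, which is why the existing analyses stop at M-stationarity. The remedy is to never form such a limit but to pair each deviating competitor $w_i$ with an iterate and a rejected step at the commensurate scale $\norm{w_i-x_\ast}$, so that the quantitative Armijo and projection inequalities can be played directly against each other. The finite case under \ref{it:continuously_diff} is the same mechanism with the fixed base point $x_\ast$, the rejected steps replaced by the steps for which the line search fails and the scale $\alpha_i=\norm{w_i-x_\ast}/\delta$ simply prescribed, which is its cleanest instance.
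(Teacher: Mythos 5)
Your architecture is genuinely close to the paper's, even though you organize it by a dichotomy on accepted step sizes: your bounded-below regime is exactly the paper's limiting argument (section~\ref{subsec:proof_accumulates} and Theorem~\ref{thm:PGDaccumulatesAtPStationaryPoints}, which pass $x_{i_k+1} \in \proj{C}{x_{i_k}-\alpha_{i_k}\nabla f(x_{i_k})}$ to the limit to get a proximal normal), and your collapsing regime is the contrapositive of the paper's Proposition~\ref{prop:AcceptedStep} (non-B-stationarity forces acceptance of steps bounded below near the accumulation point), with your scale-matched competitors $w_i$ playing the role of the paper's tangent-direction approximants in Proposition~\ref{prop:ArbitrarySmallAlpha}. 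The \ref{it:Lipschitz_diff} part and the bounded-below limit argument are correct.

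There is, however, a genuine gap in the ``engine'' of your substantial case. You claim that, for rejected steps at varying base points $\tilde x_i \to x_\ast$, the remainders $f(y_i)-f(\tilde x_i)-\ip{\nabla f(\tilde x_i)}{y_i-\tilde x_i}$ are \emph{uniformly} $o(\norm{y_i-\tilde x_i})$ ``by uniform continuity of $\nabla f$ on a compact neighborhood of $x_\ast$''. Under~\ref{it:continuously_diff} this justification is unavailable: $\nabla f$ is only assumed to exist at points of $C$ and to be continuous as a map $C \to \mathcal{E}$, so there is no neighborhood on which $\nabla f$ is defined, and pointwise differentiability at each point of $C$ plus continuity of $\nabla f|_C$ does \emph{not} imply uniform first-order remainders between pairs of points of $C$. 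For instance, take $C = \{0\} \cup \bigcup_{n\ge 3}\{c_n, c_n+d_n\} \subseteq \R$ with $c_n = 2^{-n}$, $d_n = c_n/n$, and $f$ a function vanishing near every $c_n$ and near $0$, equal to $\varepsilon d_n$ with zero derivative at $c_n+d_n$, and smoothly interpolated in between: then \ref{it:continuously_diff} holds (all gradients on $C$ are zero), yet the remainder between $c_n$ and $c_n+d_n$ equals $\varepsilon\norm{c_n+d_n-c_n}$ for every $n$. Without uniformity you cannot conclude $\norm{y_i-\tilde x_i}/\alpha_i \to 0$, and the contradiction collapses. This is precisely the difficulty the paper's Proposition~\ref{prop:AcceptedStep} is engineered to avoid: it centers \emph{every} expansion at the single fixed point $\ushort{x}$, uses continuity of $\nabla f$ on $C$ only to compare gradients (never remainders), and absorbs the fixed-center remainders using the lower bound $\norm{y-x} \ge \tfrac{3}{4}(1-\kappa)\norm{x-\ushort{x}}$ supplied by Proposition~\ref{prop:ArbitrarySmallAlpha}. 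Your argument can be repaired in the same spirit --- expand both $f(y_i)$ and $f(x_k)$ around $x_\ast$, and choose the paired iterate with $\norm{x_k-x_\ast} \le \alpha^\ast_i$, so the stray term $o(\norm{x_k-x_\ast})$ is at most $\alpha^\ast_i \cdot o(1)$ and still contradicts $\norm{y_i-x_k}/\alpha^\ast_i \ge c_0$ --- but that re-centering is the missing idea, not a routine detail. Two smaller points: the claim that the Armijo decrease ``guarantees'' $\norm{x_{k+1}-x_k}\to 0$ is immediate only for monotone $\pgd$; for the ``max'' and ``average'' rules it is the nontrivial content of Propositions~\ref{prop:NonmonotonePGD}, \ref{prop:NonmonotonePGDAverage}\creflastconjunction\ref{prop:convergence_ik}. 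And the finite case of the theorem is settled by the stationarity test in line~\ref{algo:PGD:while} of Algorithm~\ref{algo:PGD} (the sequence is finite exactly when that test triggers), not by a line search that fails for all small $\alpha$, a scenario which under the paper's algorithm cannot occur.
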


If $\nabla f$ is globally Lipschitz continuous, then it is known that every local minimizer of $f|_C$ is P-stationary for~\eqref{eq:MinDiffFunctionClosedSet} \cite[Proposition~3.5(ii)]{ThemelisStellaPatrinos} (the result is given for a global minimizer but the proof shows that it also holds for a local minimizer) and that $\pgd$ with a constant step size smaller than the inverse of the Lipschitz constant accumulates at P-stationary points of~\eqref{eq:MinDiffFunctionClosedSet} \cite[Theorem~5.6(i)]{ThemelisStellaPatrinos}. Indeed, the ZeroFPR algorithm proposed in \cite{ThemelisStellaPatrinos} extends the proximal gradient algorithm with a constant step size \cite[Remark~5.5]{ThemelisStellaPatrinos}, which itself extends $\pgd$ with a constant step size; problem~\eqref{eq:MinDiffFunctionClosedSet} corresponds to \cite[problem~(1.1)]{ThemelisStellaPatrinos} with $g$ the indicator function of our set $C$. These results were rediscovered in \cite{Pauwels} where, in addition, the distance from the negative gradient of the continuously differentiable function to the regular subdifferential of the other function is proven to converge to zero along the generated sequence, and a quadratic lower bound on $\varphi-\varphi(\oshort{x})$ for every accumulation point $\oshort{x}$ is obtained, where $\varphi$ denotes the sum of the two functions. The two results cited from \cite{ThemelisStellaPatrinos} were already stated in \cite[Theorems~2.2 and~3.1]{BeckEldar} for $C$ the set $\sparse{n}{s}$ of vectors of $\R^n$ having at most $s$ nonzero components for some positive integer $s < n$, and in \cite[Proposition~1 and Theorem~1]{BalashovPolyakTremba} for $C$ satisfying a regularity condition called \emph{proximal smoothness}, which none of the four examples studied in section~\ref{sec:ExamplesFeasibleSets} satisfies.

This paper is organized as follows. The necessary background in variational analysis is introduced in section~\ref{sec:ElementsOfVariationalAnalysis}. The literature on stationarity notions for problem~\eqref{eq:MinDiffFunctionClosedSet} is surveyed in section~\ref{sec:StationarityLiterature}. The $\pgd$ algorithm is reviewed in section~\ref{sec:PGD}. It is analyzed under hypothesis~\ref{it:continuously_diff} in section~\ref{sec:ConvergenceAnalysisForContinuousGradient} and under hypothesis~\ref{it:Lipschitz_diff} in section~\ref{sec:ConvergenceAnalysisForLocallyLipschitzContinuousGradient}. Four practical examples of a set $C$ for which the first inclusion in~\eqref{eq:NestedNormalCones} is an equality for all $x \in C$ and the second is strict for infinitely many $x \in C$ are given in section~\ref{sec:ExamplesFeasibleSets}. Theorem~\ref{thm:StationarityAccumulationPointsPGD} is illustrated by a comparison between $\pgd$ and a first-order algorithm that is not guaranteed to accumulate at B-stationary points of~\eqref{eq:MinDiffFunctionClosedSet} in section~\ref{sec:PGDvsP2GD}. Concluding remarks are gathered in section~\ref{sec:Conclusion}.

\section{Elements of variational analysis}
\label{sec:ElementsOfVariationalAnalysis}
This section, mostly based on \cite{RockafellarWets}, reviews background material in variational analysis that is used in the rest of the paper. Section~\ref{subsec:ProjectionMap} concerns the projection map onto $C$. Section~\ref{subsec:NormalityStationarity} reviews the three notions of normality on which the three notions of stationarity provided in Definition~\ref{def:M/B/P-Stationarity} are based.

Recall that, throughout the paper, $\mathcal{E}$ is a Euclidean vector space and $C \subseteq \mathcal{E}$ is nonempty and closed. Moreover, for every $x \in \mathcal{E}$ and $\rho \in (0, \infty)$, $\ball(x, \rho) \coloneq \{y \in \mathcal{E} \mid \norm{x-y} < \rho\}$ and $\ball[x, \rho] \coloneq \{y \in \mathcal{E} \mid \norm{x-y} \le \rho\}$ are respectively the open and closed balls of center $x$ and radius $\rho$ in~$\mathcal{E}$.
A nonempty subset $K$ of $\mathcal{E}$ is called a \emph{cone} if $x \in K$ implies $\alpha x \in K$ for all $\alpha \in [0, \infty)$ \cite[\S 3B]{RockafellarWets}.

\subsection{Projection map}
\label{subsec:ProjectionMap}
Given $x \in \mathcal{E}$, the distance from $x$ to $C$ is $\dist(x, C) \coloneq \min_{y \in C} \norm{x-y}$ and the projection of $x$ onto $C$ is $\proj{C}{x} \coloneq \argmin_{y \in C} \norm{x-y}$. The function $\mathcal{E} \to \R : x \mapsto \dist(x, C)$ is continuous and, for every $x \in \mathcal{E}$, the set $\proj{C}{x}$ is nonempty and compact \cite[Example~1.20]{RockafellarWets}.
Proposition~\ref{prop:ProjectedTranslation} is invoked frequently in the rest of the paper.

\begin{proposition}
\label{prop:ProjectedTranslation}
For all $x \in C$, $v \in \mathcal{E}$, and $y \in \proj{C}{x-v}$,
\begin{align}
\label{eq:ProjectedTranslationDistance}
\norm{y-x} &\le 2 \norm{v},\\
\label{eq:ProjectedTranslationIP}
2 \ip{v}{y-x} &\le - \norm{y-x}^2,
\end{align}
and the inequalities are strict if $x \notin \proj{C}{x-v}$.
\end{proposition}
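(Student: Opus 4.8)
The plan is to exploit the single fact that $x$ itself lies in $C$ and is therefore a competitor in the minimization defining the projection. Since $y \in \proj{C}{x-v}$ minimizes $w \mapsto \norm{(x-v)-w}$ over $w \in C$, and $x \in C$, the defining minimality yields the one inequality on which everything rests:
\begin{equation*}
\norm{(x-v)-y} \le \norm{(x-v)-x} = \norm{v}.
\end{equation*}
All of the claimed estimates follow from this by squaring and applying the Cauchy--Schwarz inequality; no property of $C$ beyond $x \in C$ is needed.

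To obtain~\eqref{eq:ProjectedTranslationIP}, I would set $u \coloneq y-x$, so that $(x-v)-y = -(v+u)$, and expand the squared form of the inequality above:
\begin{equation*}
\norm{v}^2 + 2\ip{v}{u} + \norm{u}^2 = \norm{v+u}^2 \le \norm{v}^2,
\end{equation*}
which after cancelling $\norm{v}^2$ is exactly $2\ip{v}{y-x} \le -\norm{y-x}^2$. Inequality~\eqref{eq:ProjectedTranslationDistance} then follows from~\eqref{eq:ProjectedTranslationIP} together with Cauchy--Schwarz: since $\norm{u}^2 \le -2\ip{v}{u} \le 2\norm{v}\,\norm{u}$, dividing by $\norm{u}$ when $u \neq 0$ gives $\norm{y-x} \le 2\norm{v}$, while the case $u = 0$ is trivial.

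For the strictness claims, the only extra observation is that if $x \notin \proj{C}{x-v}$ then, because $x \in C$ is a non-optimal competitor whereas $y$ is optimal, the fundamental inequality is strict, $\norm{(x-v)-y} < \norm{v}$; propagating this strictness through the expansion yields a strict version of~\eqref{eq:ProjectedTranslationIP}, and since strictness also forces $y \neq x$ (equality $y=x$ would make $x$ optimal), the division step in the derivation of~\eqref{eq:ProjectedTranslationDistance} remains legitimate and produces a strict inequality there as well. I expect no genuine obstacle here: the proposition is elementary once one recognizes that the feasibility of $x$ is the whole content, and the only point demanding a little care is this strictness bookkeeping, namely verifying that $x \notin \proj{C}{x-v}$ delivers the strict fundamental inequality and simultaneously rules out the degenerate case $y = x$.
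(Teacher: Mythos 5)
Your proof is correct and rests on exactly the same key fact as the paper's: since $x \in C$ is a competitor in the minimization defining the projection, $\norm{y-(x-v)} \le \norm{v}$, with strictness when $x \notin \proj{C}{x-v}$, and your derivation of~\eqref{eq:ProjectedTranslationIP} by expanding the square is the paper's argument verbatim. The only inessential difference is that you deduce~\eqref{eq:ProjectedTranslationDistance} from~\eqref{eq:ProjectedTranslationIP} via Cauchy--Schwarz (with correct bookkeeping that strictness forces $y \ne x$, so the division is legitimate), whereas the paper obtains it directly from the key inequality by the triangle inequality.
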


\begin{proof}
By definition of the projection, $\norm{y-(x-v)} \le \norm{x-(x-v)} = \norm{v}$ and the inequality is strict if $x \notin \proj{C}{x-v}$. Thus, on the one hand,
\begin{equation*}
\norm{y-x}
= \norm{y-(x-v)-v}
\le \norm{y-(x-v)} + \norm{-v}
\le \norm{v} + \norm{v}
= 2 \norm{v},
\end{equation*}
and, on the other hand, $\norm{y-(x-v)}^2 \le \norm{v}^2$, which is equivalent to~\eqref{eq:ProjectedTranslationIP}.
\end{proof}

\subsection{Normality and stationarity}
\label{subsec:NormalityStationarity}
Based on \cite[Chapter~6]{RockafellarWets}, this section reviews the three notions of normality on which the three notions of stationarity given in Definition~\ref{def:M/B/P-Stationarity} are based.

A vector $v \in \mathcal{E}$ is said to be \emph{tangent} to $C$ at $x \in C$ if there exist sequences $(x_i)_{i \in \N}$ in $C$ converging to $x$ and $(t_i)_{i \in \N}$ in $(0, \infty)$ such that the sequence $(\frac{x_i-x}{t_i})_{i \in \N}$ converges to $v$ \cite[Definition~6.1]{RockafellarWets}. The set of all tangent vectors to $C$ at $x \in C$ is a closed cone \cite[Proposition~6.2]{RockafellarWets} called the \emph{tangent cone} to $C$ at $x$ and denoted by $\tancone{C}{x}$.
The \emph{regular normal cone} to $C$ at $x \in C$ is
\begin{equation*}
\regnorcone{C}{x} \coloneq \left\{v \in \mathcal{E} \mid \ip{v}{w} \le 0 \; \forall w \in \tancone{C}{x}\right\},
\end{equation*}
which is a closed convex cone \cite[Definition~6.3 and Proposition~6.5]{RockafellarWets}.
A vector $v \in \mathcal{E}$ is said to be \emph{normal (in the general sense)} to $C$ at $x \in C$ if there exist sequences $(x_i)_{i \in \N}$ in $C$ converging to $x$ and $(v_i)_{i \in \N}$ converging to $v$ such that, for all $i \in \N$, $v_i \in \regnorcone{C}{x_i}$ \cite[Definition~6.3]{RockafellarWets}. The set of all normal vectors to $C$ at $x \in C$ is a closed cone \cite[Proposition~6.5]{RockafellarWets} called the \emph{normal cone} to $C$ at $x$ and denoted by $\norcone{C}{x}$.
A vector $v \in \mathcal{E}$ is called a \emph{proximal normal} to $C$ at $x \in C$ if there exists $\oshort{\alpha} \in (0, \infty)$ such that $x \in \proj{C}{x+\oshort{\alpha}v}$, i.e., $\oshort{\alpha}\norm{v} = \dist(x+\oshort{\alpha}v, C)$, which implies that, for all $\alpha \in [0, \oshort{\alpha})$, $\proj{C}{x+\alpha v} = \{x\}$ \cite[Example~6.16]{RockafellarWets}. The set of all proximal normals to $C$ at $x \in C$ is a convex cone called the \emph{proximal normal cone} to $C$ at $x$ and denoted by $\proxnorcone{C}{x}$.

As stated in~\eqref{eq:NestedNormalCones}, for all $x \in C$,
\begin{equation*}
\proxnorcone{C}{x} \subseteq \regnorcone{C}{x} \subseteq \norcone{C}{x}.
\end{equation*}
The closed set $C$ is said to be \emph{Clarke regular} at $x \in C$ if $\regnorcone{C}{x} = \norcone{C}{x}$ \cite[Definition~6.4]{RockafellarWets}. Thus, M-stationarity is equivalent to B-stationarity at a point $x \in C$ if and only if $C$ is Clarke regular at $x$, which is not the case in many practical situations, as illustrated by the four examples given in section~\ref{sec:ExamplesFeasibleSets}. For those examples, however, regular normals are proximal normals (Proposition~\ref{prop:ProximalRegularLowSparsityOrRank}). An example of a set $C$ and a point $x \in C$ such that both inclusions in~\eqref{eq:NestedNormalCones} are strict is given in Example~\ref{example:DifferentNormalCones}.

\begin{example}
\label{example:DifferentNormalCones}
Let $\mathcal{E} \coloneq \R^2$ and $C \coloneq \left\{\left(t, \max\left\{0, t^{3/5}\right\}\right) \mid t \in \R\right\}$ (inspired by \cite[Figure~6--12(a)]{RockafellarWets}). Then,
\begin{align*}
\tancone{C}{0, 0} &= (\{0\} \times [0, \infty)) \cup ((-\infty, 0] \times \{0\}),\\
\regnorcone{C}{0, 0} &= [0, \infty) \times (-\infty, 0],\\
\proxnorcone{C}{0, 0} &= \regnorcone{C}{0, 0} \setminus ((0, \infty) \times \{0\}),\\
\norcone{C}{0, 0} &= \regnorcone{C}{0, 0} \cup \tancone{C}{0, 0}.
\end{align*}
Thus,
\begin{equation*}
\proxnorcone{C}{0, 0}
\subsetneq \regnorcone{C}{0, 0}
\subsetneq \norcone{C}{0, 0}.
\end{equation*}
This is illustrated in Figure~\ref{fig:DifferentNormalCones}.
\begin{figure}[h]
\begin{center}
\begin{tikzpicture}[scale=2]
\def\r{0.5}
\draw (-1, 0) -- (0, 0);
\draw [thick, blue!30] (-\r, 0) -- (0, 0) -- (0, \r);
\fill [JungleGreen!60] (0, -\r) arc (270:360:\r);
\fill [JungleGreen!60] (0, 0) -- (0, -\r) -- (\r, 0) -- cycle;
\draw [JungleGreen!60] (0.003, {0.003-\r}) -- ({\r-0.003}, -0.003);
\draw [thick, red] (0, 0) -- (\r, 0);
\draw (0, {-1.5*\r}) node {$\norcone{C}{0, 0} = {\color{JungleGreen!60} \proxnorcone{C}{0, 0}} \cup {\color{red} (\regnorcone{C}{0, 0} \setminus \proxnorcone{C}{0, 0})} \cup {\color{blue!30} \tancone{C}{0, 0}}$};
\draw [domain=0:1, samples=400] plot (\x, {\x^0.6});
\draw (0, 0) node {{\tiny $\bullet$}};
\draw ({-0.4*\r}, {0.22*\r}) node {$(0, 0)$};
\draw (1.1, 1) node {$C$};
\end{tikzpicture}
\end{center}
\caption{Tangent and normal cones from Example~\ref{example:DifferentNormalCones}.}
\label{fig:DifferentNormalCones}
\end{figure}
\end{example}

As pointed out in section~\ref{sec:Introduction}, the regular and proximal normal cones enjoy gradient characterizations that imply that, for problem~\eqref{eq:MinDiffFunctionClosedSet}, B- and P-stationarity are the strongest necessary conditions for local optimality under \ref{it:continuously_diff} and \ref{it:Lipschitz_diff}, respectively. The characterization given in~\eqref{eq:GradientCharacterizationRegularNormals} follows from \cite[Theorem~6.11]{RockafellarWets}. That given in~\eqref{eq:GradientCharacterizationProximalNormals} comes from Theorem~\ref{thm:GradientCharacterizationProximalNormals}, established at the end of this section.
Example~\ref{example:ProxNorConeNotNecessary} illustrates that, for problem~\eqref{eq:MinDiffFunctionClosedSet} under \ref{it:continuously_diff}, P-stationarity is not necessary for local optimality. 

\begin{example}
\label{example:ProxNorConeNotNecessary}
Let $\mathcal{E} \coloneq \R^2$, $C \coloneq \left\{(x_1, x_2) \in \R^2 \mid x_2 \ge \max\left\{0, x_1^{3/5}\right\}\right\}$ \cite[Figure~6--12(a)]{RockafellarWets}, and $f : \R^2 \to \R : (x_1, x_2) \mapsto \frac{1}{2}(x_1-1)^2 + |x_2|^{3/2}$. Then, $f$ is continuously differentiable on $\mathcal{E}$, hence on $C$, and, for all $(x_1, x_2) \in \R^2$, $\nabla f(x_1, x_2) = (x_1-1, \frac{3}{2}\mathrm{sgn}(x_2)|x_2|^{1/2})$. Thus, $-\nabla f(0, 0) = (1, 0) \in \regnorcone{C}{0, 0} \setminus \proxnorcone{C}{0, 0}$, yet $\argmin_C f = \{(0, 0)\}$. 
\end{example}

Proposition~\ref{prop:ProxNorConeNecessary} states that P-stationarity is necessary for local optimality if $f$ is assumed to satisfy~\ref{it:Lipschitz_diff}, that is, $f$
is differentiable on~$\mathcal{E}$ and $\nabla f$ is locally Lipschitz continuous. The latter means that, for every open or closed ball $\mathcal{B} \subsetneq \mathcal{E}$,
\begin{equation*}
\lip_{\mathcal{B}}(\nabla f) \coloneq \sup_{\substack{x, y \in \mathcal{B} \\ x \ne y}} \frac{\norm{\nabla f(x) - \nabla f(y)}}{\norm{x-y}} < \infty,
\end{equation*}
which implies, by \cite[Lemma~1.2.3]{Nesterov2018}, that, for all $x, y \in \mathcal{B}$,
\begin{equation}
\label{eq:InequalityLipschitzContinuousGradient}
|f(y) - f(x) - \ip{\nabla f(x)}{y-x}| \le \frac{\lip_{\mathcal{B}}(\nabla f)}{2} \norm{y-x}^2.
\end{equation}

\begin{proposition}
\label{prop:ProxNorConeNecessary}
Assume that $f$ satisfies~\ref{it:Lipschitz_diff}. If $x \in C$ is a local minimizer of $f|_C$, then $-\nabla f(x) \in \proxnorcone{C}{x}$.
\end{proposition}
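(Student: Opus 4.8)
The plan is to exhibit a single step size $\oshort{\alpha} \in (0, \infty)$ for which $x \in \proj{C}{x - \oshort{\alpha}\nabla f(x)}$; by the definition of the proximal normal cone (applied to $v = -\nabla f(x)$, so that $x + \oshort{\alpha}v = x - \oshort{\alpha}\nabla f(x)$), this is precisely the assertion $-\nabla f(x) \in \proxnorcone{C}{x}$. Writing out the defining inequality $\norm{x - (x - \oshort{\alpha}\nabla f(x))} \le \norm{y - (x - \oshort{\alpha}\nabla f(x))}$ and squaring both sides, the goal reduces to showing
\begin{equation*}
0 \le \norm{y-x}^2 + 2\oshort{\alpha}\ip{\nabla f(x)}{y-x} \qquad \text{for all } y \in C.
\end{equation*}
If $\nabla f(x) = 0$ there is nothing to prove because $0 \in \proxnorcone{C}{x}$, so I would assume $\nabla f(x) \neq 0$.

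First I would control the feasible points near $x$. Since $x$ is a local minimizer of $f|_C$, there is $\rho \in (0, \infty)$ with $f(y) \ge f(x)$ for every $y \in C \cap \ball[x, \rho]$; setting $\mathcal{B} \coloneq \ball[x, \rho]$ and $L \coloneq \lip_{\mathcal{B}}(\nabla f) < \infty$, inequality~\eqref{eq:InequalityLipschitzContinuousGradient} combined with local minimality yields, for every such $y$,
\begin{equation*}
\ip{\nabla f(x)}{y-x} \ge f(y) - f(x) - \frac{L}{2}\norm{y-x}^2 \ge -\frac{L}{2}\norm{y-x}^2.
\end{equation*}
Consequently, for $y \in C \cap \mathcal{B}$ the target inequality holds as soon as $\oshort{\alpha} \le 1/L$, since then $\norm{y-x}^2 + 2\oshort{\alpha}\ip{\nabla f(x)}{y-x} \ge (1 - \oshort{\alpha}L)\norm{y-x}^2 \ge 0$.

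The remaining, and main, difficulty is that the projection inequality must hold for every $y \in C$, whereas local minimality only constrains $f$ on $\mathcal{B}$. I would resolve this by keeping $\oshort{\alpha}$ small enough that $x - \oshort{\alpha}\nabla f(x)$ remains far from every distant feasible point: for $y \in C$ with $\norm{y-x} > \rho$, the triangle inequality gives $\norm{y - (x - \oshort{\alpha}\nabla f(x))} \ge \norm{y-x} - \oshort{\alpha}\norm{\nabla f(x)} > \rho - \oshort{\alpha}\norm{\nabla f(x)}$, and this is at least $\oshort{\alpha}\norm{\nabla f(x)} = \norm{x - (x - \oshort{\alpha}\nabla f(x))}$ whenever $\oshort{\alpha} \le \rho/(2\norm{\nabla f(x)})$. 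Taking $\oshort{\alpha} \coloneq \min\{1/L,\ \rho/(2\norm{\nabla f(x)})\} > 0$ reconciles both constraints, so the defining inequality holds for all $y \in C$ and the proof is complete. I expect the only genuine obstacle to be this treatment of far-away feasible points, the near-$x$ estimate being a direct combination of~\eqref{eq:InequalityLipschitzContinuousGradient} with the local minimality of $x$.
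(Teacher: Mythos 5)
Your proof is correct, and it takes a genuinely different route from the paper's. The paper argues by contrapositive: if $-\nabla f(x) \notin \proxnorcone{C}{x}$, then for every $\alpha \in (0,\infty)$ one has $x \notin \proj{C}{x-\alpha\nabla f(x)}$, so any projection point $y$ satisfies the strict versions of the inequalities in Proposition~\ref{prop:ProjectedTranslation}; combining \eqref{eq:ProjectedTranslationIP} with \eqref{eq:InequalityLipschitzContinuousGradient} then yields $f(y) < f(x)$ with $y \in \ball(x,\rho) \cap C$ for every $\rho$, so $x$ is not a local minimizer. In that argument the only feasible points one must control are projection points, and \eqref{eq:ProjectedTranslationDistance} automatically confines those to a small ball around $x$, so no far-away case ever arises. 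Your direct proof instead verifies the definition of the projection over all of $C$, which is precisely what forces your near/far dichotomy: local minimality plus \eqref{eq:InequalityLipschitzContinuousGradient} handles $y \in C \cap \ball[x,\rho]$, while the triangle inequality handles $\norm{y-x} > \rho$; you correctly identified this far-point step as the part with no counterpart in the near analysis, and your resolution of it is sound. What your route buys is an explicit, quantitative conclusion: every $\oshort{\alpha} \in (0, \min\{1/L,\, \rho/(2\norm{\nabla f(x)})\}]$ witnesses $x \in \proj{C}{x-\oshort{\alpha}\nabla f(x)}$, i.e., $x$ is $\oshort{\alpha}$-stationary in the sense of section~\ref{subsec:ProximalStationarityLiterature}, whereas the contrapositive, as stated, delivers only the existence of some witness (notably, the same two thresholds $1/\lip(\nabla f)$ and $\rho/(2\norm{\nabla f(x)})$ govern the paper's restriction on $\alpha$). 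What the paper's route buys is economy: it reuses Proposition~\ref{prop:ProjectedTranslation}, which is invoked throughout the paper anyway, and needs no case analysis. One cosmetic repair to your argument: the quantity $1/L$ is undefined in the degenerate case $L \coloneq \lip_{\ball[x,\rho]}(\nabla f) = 0$; there the near-point estimate holds for every $\oshort{\alpha}$, so one simply takes $\oshort{\alpha} = \rho/(2\norm{\nabla f(x)})$ (and your separate treatment of $\nabla f(x) = 0$ already avoids the other division by zero).
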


\begin{proof}
By contrapositive. Assume that $-\nabla f(x) \notin \proxnorcone{C}{x}$ for some $x \in C$. Let $\rho \in (0, \infty)$. Then, for all $\alpha \in (0, \frac{\rho}{2\norm{\nabla f(x)}}]$,
\begin{equation*}
x
\notin \proj{C}{x-\alpha\nabla f(x)}
\subseteq \ball(x, 2\alpha\norm{\nabla f(x)})
\subseteq \ball(x, \rho),
\end{equation*}
where the first inclusion holds by \eqref{eq:ProjectedTranslationDistance}. Thus, by~\eqref{eq:InequalityLipschitzContinuousGradient} and~\eqref{eq:ProjectedTranslationIP}, for all $\alpha \in (0, \min\{\frac{\rho}{2\norm{\nabla f(x)}}, \frac{1}{\lip_{\ball(x, \rho)}(\nabla f)}\}]$ and $y \in \proj{C}{x-\alpha\nabla f(x)}$,
\begin{align*}
f(y)-f(x)
&\le \ip{\nabla f(x)}{y-x} + \frac{\lip_{\ball(x, \rho)}(\nabla f)}{2} \norm{y-x}^2\\
&< \left(-\frac{1}{2\alpha} + \frac{\lip_{\ball(x, \rho)}(\nabla f)}{2}\right) \norm{y-x}^2\\
&\le 0.
\end{align*}
Hence, $x$ is not a local minimizer of $f|_C$.
\end{proof}

Theorem~\ref{thm:GradientCharacterizationProximalNormals} strengthens \cite[Proposition~8.46(d)]{RockafellarWets} by stating that \eqref{eq:GradientCharacterizationProximalNormals} is valid.

\begin{theorem}[gradient characterization of proximal normals]
\label{thm:GradientCharacterizationProximalNormals}
For every $x \in C$, \eqref{eq:GradientCharacterizationProximalNormals} holds.
\end{theorem}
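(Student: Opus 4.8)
The plan is to prove the set equality in~\eqref{eq:GradientCharacterizationProximalNormals} by establishing the two inclusions separately, observing that the only nontrivial content lies in the direction that exhibits, for each proximal normal, a witnessing function.

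For the inclusion $\supseteq$, I would simply invoke Proposition~\ref{prop:ProxNorConeNecessary}: if $h$ satisfies~\ref{it:Lipschitz_diff} and $x$ is a local minimizer of $h|_C$, then that proposition, applied with $h$ in place of $f$, yields $-\nabla h(x) \in \proxnorcone{C}{x}$. This disposes of one direction with no further work.

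For the inclusion $\subseteq$, let $v \in \proxnorcone{C}{x}$ be arbitrary. By the definition of the proximal normal cone recalled in section~\ref{subsec:NormalityStationarity}, there exists $\oshort{\alpha} \in (0, \infty)$ with $x \in \proj{C}{x + \oshort{\alpha} v}$. I would then introduce the quadratic function
\[
h : \mathcal{E} \to \R : y \mapsto \frac{1}{2\oshort{\alpha}}\norm{y - (x + \oshort{\alpha} v)}^2.
\]
Three things then remain to be checked, all routine. First, $h$ satisfies~\ref{it:Lipschitz_diff}: it is differentiable on $\mathcal{E}$ with $\nabla h(y) = \frac{1}{\oshort{\alpha}}(y - (x + \oshort{\alpha} v))$, which is globally, hence locally, Lipschitz continuous with constant $1/\oshort{\alpha}$. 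Second, evaluating at $x$ gives $\nabla h(x) = \frac{1}{\oshort{\alpha}}(x - x - \oshort{\alpha} v) = -v$, so $v = -\nabla h(x)$. Third, minimizing $h$ over $C$ is equivalent to minimizing $y \mapsto \norm{y - (x + \oshort{\alpha} v)}$ over $C$, whose set of minimizers is exactly $\proj{C}{x + \oshort{\alpha} v}$; since $x$ belongs to this set, $x$ is a global minimizer of $h|_C$, a fortiori a local one. Hence $v$ lies in the right-hand side of~\eqref{eq:GradientCharacterizationProximalNormals}, and as $v$ was arbitrary the two inclusions together give the equality.

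I do not expect a genuine obstacle here: the construction is essentially forced by the geometric meaning of a proximal normal, namely that $x$ realizes the projection of the shifted point $x + \oshort{\alpha} v$, so a squared-distance function to that shifted point is the natural witness. The only point deserving care is confirming that the witnessing $h$ can be taken with a globally Lipschitz gradient, so that it meets~\ref{it:Lipschitz_diff} rather than merely being continuously differentiable; the quadratic choice secures this automatically, and this is precisely the strengthening of \cite[Proposition~8.46(d)]{RockafellarWets} that the theorem asserts.
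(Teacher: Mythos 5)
Your proof is correct and follows essentially the same route as the paper's: the inclusion $\supseteq$ via Proposition~\ref{prop:ProxNorConeNecessary}, and the inclusion $\subseteq$ via the quadratic witness $h(y) = \frac{1}{2\oshort{\alpha}}\norm{y-(x+\oshort{\alpha}v)}^2$, whose globally Lipschitz (affine) gradient ensures~\ref{it:Lipschitz_diff} and for which $x$ is a global, hence local, minimizer of $h|_C$. No gaps to report.
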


\begin{proof}
Let $x \in C$. The inclusion $\supseteq$ holds by Proposition~\ref{prop:ProxNorConeNecessary}.
For the inclusion~$\subseteq$, let $v \in \proxnorcone{C}{x}$. By definition of $\proxnorcone{C}{x}$, there exists $\oshort{\alpha} \in (0, \infty)$ such that $x \in \proj{C}{x+\oshort{\alpha}v}$. This is equivalent to the fact that $x$ is a global minimizer of $h|_C$ with
\begin{equation*}
h : \mathcal{E} \to \R : y \mapsto \frac{1}{2\oshort{\alpha}} \norm{y - (x+\oshort{\alpha}v)}^2.
\end{equation*}
The function $h$ is differentiable, its gradient is locally Lipschitz continuous (actually, globally Lipschitz continuous, since it is an affine map), and $-\nabla h(x) = v$.
Since $x$ is a global minimizer of $h|_C$, it is also a local minimizer of $h|_C$. Thus,
\begin{equation*}
v \in \left\{-\nabla h(x) ~\Big|
\begin{array}{l} h : \mathcal{E} \to \R \text{ satisfies~\ref{it:Lipschitz_diff}},
\\ x \text{ is a local minimizer of } h|_C \end{array}\right\},
\end{equation*}
which implies the inclusion $\subseteq$ in~\eqref{eq:GradientCharacterizationProximalNormals}.
\end{proof}

\begin{remark}
From our proof, we see that \eqref{eq:GradientCharacterizationProximalNormals} is also true if we replace ``local minimizer'' with ``global minimizer''. The same holds for~\eqref{eq:GradientCharacterizationRegularNormals} \cite[Theorem~6.11]{RockafellarWets}. However, in this section, we are interested in understanding the closeness between the notions of stationarity and \textit{local} optimality.
\end{remark}

\section{Stationarity in the literature}
\label{sec:StationarityLiterature}
This section surveys the names given to the stationarity notions provided in Definition~\ref{def:M/B/P-Stationarity} and attempts to offer a brief historical perspective.
The terms ``B-stationarity'' and ``M-stationarity'' first appeared in the literature about mathematical programs with equilibrium constraints (MPECs), as explained in sections~\ref{subsec:HistoryBouligandStationarity} and \ref{subsec:HistoryMordukhovichStationarity}.
In contrast, the term ``P-stationarity'' seems to be new. P-stationarity is called ``criticality'' in \cite[Definition~3.1(ii)]{ThemelisStellaPatrinos}; recall that problem~\eqref{eq:MinDiffFunctionClosedSet} is \cite[problem~(1.1)]{ThemelisStellaPatrinos} with $g$ the indicator function of our set $C$. We propose the name ``P-stationarity'' because this stationarity notion is based on the proximal normal cone. It is related to the so-called $\alpha$-stationarity; see section~\ref{subsec:ProximalStationarityLiterature}.

\subsection{A brief history of B-stationarity}
\label{subsec:HistoryBouligandStationarity}
Peano knew that B-stationarity is a necessary condition for optimality. The statement is implicit in his 1887 book \emph{Applicazioni geometriche del calcolo infinitesimale} and explicit in his 1908 book \emph{Formulario Mathematico}, where the formulation is based on the tangent cone and the derivative defined in the same book; see the historical investigation in \cite{DoleckiGreco2007,DoleckiGreco2011}.

B-stationarity appears as a necessary condition for optimality in \cite[Theorem~2.1]{Varaiya} and \cite[Theorem~1]{Guignard}, without any reference to Peano's work. The latter theorem uses the polar of the closure of the convex hull of the tangent cone, which equals the polar of the tangent cone by \cite[Corollary~6.21]{RockafellarWets}. Neither ``stationary'' nor ``critical'' appears in \cite{Varaiya} or \cite{Guignard}.

The ``Bouligand derivative'', or ``B-derivative'' for short, was introduced in \cite{Robinson1987}. It is a special case of the contingent derivative introduced by Aubin based on the tangent cone. The name ``Bouligand derivative'' was chosen because the tangent cone is generally attributed to Bouligand; see, e.g., \cite{RockafellarWets,Mordukhovich1,Mordukhovich2018} for recent references. Differentiability implies B-differentiability.

In \cite[\S 4]{ScholtesStohr}, a point where a real-valued function is B-differentiable is called a ``Bouligand stationary (B-stationary) point'' of the function if the B-derivative at that point is nonnegative. This is a stationarity concept for unconstrained optimization, which therefore does not apply to problem~\eqref{eq:MinDiffFunctionClosedSet}.

B-stationarity is called a ``stationarity condition'' and said to be ``well known'' in \cite[\S 4.1]{LuoPangRalphWu} where \cite{Guignard} is cited.

In \cite[\S 2.1]{ScheelScholtes}, the term ``B-stationarity'' is used to name the stationarity concept for an MPEC that corresponds to B-stationarity in the sense of \cite[\S 4]{ScholtesStohr} for a nonsmooth reformulation of the MPEC \cite[Proposition~6]{ScheelScholtes}.
As pointed out in \cite[\S 2.1]{Ye2005} and \cite[\S 3.3]{FlegelKanzow2005JOTA}, B-stationarity in the sense of \cite[\S 2.1]{ScheelScholtes}, which is specific to MPECs, is not B-stationarity in the sense of Definition~\ref{def:M/B/P-Stationarity} and is called ``MPEC-linearized B-stationarity'' in \cite[\S 3.3]{FlegelKanzow2005JOTA} to avoid confusion. Nevertheless, this MPEC-linearized B-stationarity appears under the name ``B-stationarity'' in \cite[\S 1.1]{HuRalph}, \cite[Definition~2.2]{GuoLin}, and \cite[Definition~3.2]{WuZhangZhang}, which all cite \cite{ScheelScholtes}.

The term ``B-stationarity'' was used to name the absence of descent directions in the tangent cone (as in Definition~\ref{def:M/B/P-Stationarity}) first in \cite[\S 1]{PangFukushima}. It was used in this sense in several subsequent works by various authors; see, e.g., \cite[\S 2]{FukushimaPang}, \cite[\S 2]{FukushimaTseng}, \cite[Definition~2.4]{FukushimaLin}, \cite[Definition~2.2]{Ye2005}, \cite[\S\S 3.3 and 4]{FlegelKanzow2005JOTA}, \cite[\S 3]{FlegelKanzow2005Optimization}, \cite[\S 2]{Pang}, \cite[Definition~2.4]{SteffensenUlbrich}, \cite[Definition 3.4]{Gfrerer}, \cite[(18)]{PangRazaviyaynAlvarado}, \cite[Definition~3(1)]{BenkoGfrerer2017}, \cite[Definition~4(i)]{BenkoGfrerer2018}, \cite[\S 4]{HosseiniLukeUschmajew2019}, and \cite[Definition~6.1.1]{CuiPang}.

In \cite[Definition~6.1.1]{CuiPang}, B-stationarity is defined for the problem of minimizing a real-valued function that is B-differentiable on a nonempty closed subset of a Euclidean vector space, thereby extending the concept introduced in \cite[\S 4]{ScholtesStohr} to constrained optimization. This more general definition reduces to that from Definition~\ref{def:M/B/P-Stationarity} if the function is differentiable.

B-stationarity is also known under other names in the literature.
First, in \cite[Definition~1(b)]{FlegelKanzowOutrata} and \cite[\S 3]{Mehlitz}, B-stationarity is called ``strong stationarity''; \cite[problem~(4)]{FlegelKanzowOutrata} and \cite[(P2)]{Mehlitz} reduce to problem~\eqref{eq:MinDiffFunctionClosedSet} for $F$ the identity map on $\R^n$.
Second, because the regular normal cone is also called the Fréchet normal cone, especially in infinite-dimensional spaces \cite{RockafellarWets,Mordukhovich1,Mordukhovich2018}, B-stationarity is called ``Fréchet stationarity'', or ``F-stationarity'' for short, in \cite[Definition~4.1(ii)]{LiSongXiu2019}, \cite[Definition~5.1(i)]{LiXiuZhou2020}, \cite[Definition~3.2(ii)]{LuoQi2023}, and \cite{Pauwels}.
Third, B-stationarity is simply called ``stationarity'' (or ``criticality'') in \cite[\S 2.1]{SchneiderUschmajew2015}, \cite[\S 2.1.1]{HaLiuBarber2020}, \cite[Definition~2.3]{LevinKileelBoumal2023}, \cite[Definition~3.2(c)]{LevinKileelBoumal2025}, and \cite[Definition~1]{GaoPengYuan}.

\subsection{A brief history of M-stationarity}
\label{subsec:HistoryMordukhovichStationarity}
According to \cite[\S 2]{FlegelKanzowOutrata}, the term ``M-stationarity'' was introduced in \cite{Scholtes} for an MPEC. This name was chosen because the corresponding stationarity condition was derived from the generalized differential calculus of Mordukhovich. To the best of our knowledge, the term ``M-stationarity'' was used to indicate that the negative gradient is in the normal cone (as in Definition~\ref{def:M/B/P-Stationarity}) first in \cite[Definition~1(a)]{FlegelKanzowOutrata}; recall that \cite[problem~(4)]{FlegelKanzowOutrata} reduces to problem~\eqref{eq:MinDiffFunctionClosedSet} for $F$ the identity map on $\R^n$. There, the name is motivated by the presence of the normal cone that was introduced by Mordukhovich. M-stationarity appears, under this name, in several subsequent works by various authors; see, e.g., \cite[Definition~3(3)]{BenkoGfrerer2017}, \cite[Definition~4(iii)]{BenkoGfrerer2018}, \cite[\S 4]{HosseiniLukeUschmajew2019}, \cite[\S 3]{Mehlitz}, \cite[\S 2]{KanzowMehlitz}, \cite[\S 3]{JiaEtAl}, and \cite[\S 2.3]{JiaKanzowMehlitz}.

\subsection{P-stationarity and $\alpha$-stationarity}
\label{subsec:ProximalStationarityLiterature}
P-stationarity is closely related to \mbox{$\alpha$-stationarity}, which was introduced in \cite[Definition~2.3]{BeckEldar} for $C = \sparse{n}{s}$ and in \cite[Definition~4.1(i)]{LiSongXiu2019}, \cite[\S 2.1.1]{HaLiuBarber2020}, \cite[Definition~5.1(ii)]{LiXiuZhou2020}, \cite[(4.2)]{LiLuo2023}, and \cite[Definition~3.2(i)]{LuoQi2023} for several sets of low-rank matrices. By definition of the proximal normal cone, a point $x \in C$ is P-stationary for~\eqref{eq:MinDiffFunctionClosedSet} if and only if there exists $\alpha \in (0, \infty)$ such that $x \in \proj{C}{x-\alpha\nabla f(x)}$. In contrast, given $\alpha \in (0, \infty)$, a point $x \in C$ is said to be $\alpha$-stationary for~\eqref{eq:MinDiffFunctionClosedSet} if $x \in \proj{C}{x-\alpha\nabla f(x)}$. Thus, while $\alpha$-stationarity prescribes the number $\alpha \in (0, \infty)$, P-stationarity merely requires the existence of such a number. Furthermore, $\alpha$-stationarity should not be confused with the approximate stationarity from \cite[Definition~2.6]{LevinKileelBoumal2023}.

\section{The PGD algorithm}
\label{sec:PGD}
This section reviews the $\pgd$ algorithm, as defined in \cite[Algorithm~3.1]{JiaEtAl} except that the ``average'' rule is allowed as an alternative to the ``max'' rule. Its iteration map, called the $\pgd$ map, is defined as Algorithm~\ref{algo:PGDmap}. $\pgd$ is defined as Algorithm~\ref{algo:PGD}, which uses Algorithm~\ref{algo:PGDmap} as a subroutine. The nonmonotonic behavior of $\pgd$ is described in Propositions~\ref{prop:NonmonotonePGD} and~\ref{prop:NonmonotonePGDAverage}.

\begin{algorithm}[H]
\caption{$\pgd$ map}
\label{algo:PGDmap}
\begin{algorithmic}[1]
\Require
$(\mathcal{E}, C, f, \ushort{\alpha}, \oshort{\alpha}, \beta, c)$, where $\mathcal{E}$ is a Euclidean vector space, $C$ is a nonempty closed subset of~$\mathcal{E}$, $f : \mathcal{E} \to \R$ satisfies \ref{it:continuously_diff}, $0 < \ushort{\alpha} \le \oshort{\alpha} < \infty$, and $\beta, c \in (0, 1)$.
\Input
$(x, \mu)$ with $x \in C$ and $\mu \in [f(x), \infty)$.
\Output
$y \in \pgd(x, \mu; \mathcal{E}, C, f, \ushort{\alpha}, \oshort{\alpha}, \beta, c)$.

\State
Choose $\alpha \in [\ushort{\alpha}, \oshort{\alpha}]$ and $y \in \proj{C}{x-\alpha\nabla f(x)}$;
\label{algo:PGDmap:InitialStepSize}
\While
{$f(y) > \mu + c \ip{\nabla f(x)}{y-x}$}
\label{algo:PGDmap:NonmonotoneArmijoCondition}
\State
$\alpha \gets \alpha \beta$;
\State
Choose $y \in \proj{C}{x-\alpha\nabla f(x)}$;
\EndWhile
\State
Return $y$.
\end{algorithmic}
\end{algorithm}

\begin{remark}
The Armijo condition
\begin{equation*}
f(y) \le \mu + c \ip{\nabla f(x)}{y-x}
\end{equation*}
ensures that the decrease $\mu-f(y)$ is at least a fraction $c$ of the opposite of the directional derivative of $f$ at $x$ along the update vector $y-x$. By \eqref{eq:ProjectedTranslationIP}, this condition implies that
\begin{equation}\label{eq:KanzowMehlitzDecay}
f(y) \le \mu - \frac{c}{2\alpha} \norm{y-x}^2,
\end{equation}
which is the condition used in \cite[Algorithms~3.1 and 4.1]{KanzowMehlitz} and \cite[Algorithm~3.1]{DeMarchi}. Importantly, all results from \cite{KanzowMehlitz} hold for both conditions, as is clear from the proofs.
\end{remark}

\begin{remark}
\label{rem:PGDmap_terminates}
By Proposition~\ref{prop:AcceptedStep}, under \ref{it:continuously_diff}, if $x$ is not B-stationary for~\eqref{eq:MinDiffFunctionClosedSet}, then the while loop in Algorithm~\ref{algo:PGDmap} is guaranteed to terminate, thereby producing a point $y$ such that $f(y) < \mu$; $y \ne x$ holds because $x$ is not B-stationary and hence not P-stationary. If $f$ satisfies~\ref{it:Lipschitz_diff}, then the while loop is guaranteed to terminate for every $x \in C$, by Corollary~\ref{coro:PGDmapTerminationLipschitzContinuousGradient}.
\end{remark}

The $\pgd$ algorithm is defined as Algorithm~\ref{algo:PGD}. It is said to be monotone or nonmonotone depending on whether $\mu_i=f(x_i)$ for all $i$ (that is, $l=0$ for the ``max'' rule, or $p=1$ for the ``average'' rule) or not.

\begin{algorithm}[H]
\caption{$\pgd$}
\label{algo:PGD}
\begin{algorithmic}[1]
\Require
$(\mathcal{E}, C, f, \ushort{\alpha}, \oshort{\alpha}, \beta, c, \text{``nonmonotonicity''})$, where $\mathcal{E}$ is a Euclidean vector space, $C$ is a nonempty closed subset of~$\mathcal{E}$, $f : \mathcal{E} \to \R$ satisfies \ref{it:continuously_diff}, $0 < \ushort{\alpha} \le \oshort{\alpha} < \infty$, $\beta, c \in (0, 1)$, and $\text{``nonmonotonicity''} \in \{(\text{``max''}, l), (\text{``average''}, p)\}$ with $l \in \N$ and $p \in (0, 1]$.
\Input
$x_0 \in C$.
\Output
a sequence in $C$.

\State
$i \gets 0$;
\State
$\mu_{-1} \gets f(x_0)$;
\While
{$-\nabla f(x_i) \notin \regnorcone{C}{x_i}$}
\label{algo:PGD:while}
\If
{$\text{``nonmonotonicity''} = (\text{``max''}, l)$}
\State
$\mu_i \gets \max_{j \in \{\max\{0, i-l\}, \dots, i\}} f(x_j)$;
\ElsIf
{$\text{``nonmonotonicity''} = (\text{``average''}, p)$}
\State
$\mu_i \gets (1-p)\mu_{i-1} + pf(x_i)$; 
\EndIf
\State
Choose $x_{i+1} \in \hyperref[algo:PGDmap]{\pgd}(x_i, \mu_i; \mathcal{E}, C, f, \ushort{\alpha}, \oshort{\alpha}, \beta, c)$;
\label{algo:PGD:PGDmap}
\State
$i \gets i+1$;
\EndWhile
\end{algorithmic}
\end{algorithm}

\begin{remark}
For simplicity, we use a constant weight $p$ in the ``average'' rule. However, we could allow the weight to change from one iteration to the other. It would then be denoted by $p_i$. The main results of the article would hold true in this more general setting, under the additional assumption that $\inf_{i\in\N} p_i >0$.
\end{remark}

\begin{remark}
\label{rem:PGD_H2}
If $f$ satisfies~\ref{it:Lipschitz_diff}, then $\regnorcone{C}{x_i}$ should be replaced with $\proxnorcone{C}{x_i}$ in line~\ref{algo:PGD:while}.
\end{remark}

Examples of a set $C$ for which the projection map and the regular and proximal normal cones can be described explicitly abound; see section~\ref{sec:ExamplesFeasibleSets}. For such examples, Algorithm~\ref{algo:PGD} can be practically implemented.

\begin{remark}
\label{rem:FiniteOrInfinite}
From Remark \ref{rem:PGDmap_terminates}, under \ref{it:continuously_diff}, the call to Algorithm~\ref{algo:PGDmap} in line~\ref{algo:PGD:PGDmap} of $\pgd$ never results in an infinite loop.
Consequently, under \ref{it:continuously_diff}, by running $\pgd$, one always encounters one of the following two situations:
\begin{itemize}
\item $\pgd$ generates a finite sequence, and the last element of this sequence is B-stationary for~\eqref{eq:MinDiffFunctionClosedSet}, and even P-stationary for~\eqref{eq:MinDiffFunctionClosedSet} if $f$ satisfies~\ref{it:Lipschitz_diff}, by Remark~\ref{rem:PGD_H2};
\item $\pgd$ generates an infinite sequence.
\end{itemize}
\end{remark}

The rest of this section and the next two consider the nontrivial case where $\pgd$ generates an infinite sequence. The stationarity of the accumulation points of this sequence, if any, is studied in sections~\ref{sec:ConvergenceAnalysisForContinuousGradient} and \ref{sec:ConvergenceAnalysisForLocallyLipschitzContinuousGradient}. Following \cite[Remark~14]{Polak1971}, which states that it is usually better to determine whether an algorithm generates a sequence having at least one accumulation point by examining the algorithm in the light of the specific problem to which one wishes to apply it, no condition ensuring the existence of a convergent subsequence is imposed. As a reminder, a sequence $(x_i)_{i \in \N}$ in $\mathcal{E}$ has at least one accumulation point if and only if $\liminf_{i \to \infty} \norm{x_i} < \infty$.

A property of monotone $\pgd$ that is helpful for its analysis is the fact that $f$ is strictly decreasing along the generated sequence. For nonmonotone $\pgd$, this is not true. However, weaker properties, stated in the following two propositions, are enough for our purposes.

\begin{proposition}
\label{prop:NonmonotonePGD}
Let $(x_i)_{i \in \N}$ be a sequence generated by $\pgd$ (Algorithm~\ref{algo:PGD}) using the ``max'' rule. For every $i \in \N$, let $g(i) \in \argmax_{j \in \{\max\{0, i-l\}, \dots, i\}} f(x_j)$. Then:
\begin{enumerate}
\item $(f(x_{g(i)}))_{i \in \N}$ is monotonically nonincreasing;
\item $(x_i)_{i \in \N}$ is contained in the sublevel set~\eqref{eq:InitialSublevelSet};
\item if $x \in C$ is an accumulation point of $(x_i)_{i \in \N}$, then $(f(x_{g(i)}))_{i \in \N}$ converges to $\varphi \in [f(x), f(x_0)]$;
\item if $f$ is bounded from below and uniformly continuous on a set that contains $(x_i)_{i \in \N}$, then $(f(x_i))_{i \in \N}$ converges to $\varphi \in \R$.
\end{enumerate}
\end{proposition}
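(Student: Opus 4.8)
The plan is to establish the four claims in order, since each builds on the previous. The foundational observation is the Armijo acceptance condition: whenever $x_{i+1}$ is produced by the $\pgd$ map from $(x_i, \mu_i)$, we have $f(x_{i+1}) \le \mu_i + c\ip{\nabla f(x_i)}{x_{i+1}-x_i} \le \mu_i$, where the last inequality uses~\eqref{eq:ProjectedTranslationIP} (the directional-derivative term is nonpositive). With the ``max'' rule, $\mu_i = f(x_{g(i)}) = \max_{j \in \{\max\{0,i-l\},\dots,i\}} f(x_j)$. So the single key inequality driving everything is
\begin{equation*}
f(x_{i+1}) \le \max_{\max\{0,i-l\} \le j \le i} f(x_j) \qquad \text{for all } i \in \N.
\end{equation*}

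\emph{Claim 1.} To show $(f(x_{g(i)}))_{i\in\N}$ is nonincreasing, I would compare the windows defining $\mu_{i+1}$ and $\mu_i$. The window for $g(i+1)$ is $\{\max\{0,i+1-l\},\dots,i+1\}$, whose indices are all at most $i+1$; every index in it except possibly $i+1$ itself already lies in the window for $g(i)$, so its $f$-value is bounded by $f(x_{g(i)})$, while the new index satisfies $f(x_{i+1}) \le \mu_i = f(x_{g(i)})$ by the displayed inequality. Taking the max over the new window therefore gives $f(x_{g(i+1)}) \le f(x_{g(i)})$.

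\emph{Claim 2.} Since $(f(x_{g(i)}))$ is nonincreasing with $f(x_{g(0)}) = f(x_0)$, and since each $f(x_i) \le f(x_{g(i)})$ (as $i$ lies in its own window), we get $f(x_i) \le f(x_0)$ for all $i$, which is exactly membership in the sublevel set~\eqref{eq:InitialSublevelSet}.

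\emph{Claim 3.} The sequence $(f(x_{g(i)}))$ is nonincreasing and bounded below by $f(x)$: if $x$ is an accumulation point, then along a subsequence $x_{i_k} \to x$, so by continuity $f(x_{i_k}) \to f(x)$, and since $f(x_{i_k}) \le f(x_{g(i_k)})$ I can pass to the limit to get $f(x) \le \lim f(x_{g(i)}) =: \varphi$; the upper bound $\varphi \le f(x_0)$ is immediate from monotonicity. Convergence itself follows from monotone-plus-bounded.

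\emph{Claim 4.} This is the hardest step, and the one I expect to require real work rather than bookkeeping. The goal is to upgrade convergence of the ``max-value'' subsequence to convergence of the full sequence $(f(x_i))$. The standard device (originating with Grippo--Lampariello--Lucidi-type arguments) is to prove by induction on $r \in \{1,\dots,l+1\}$ that $\lim_{i\to\infty} \norm{x_{g(i)} - x_{g(i)-r}} = 0$ and $\lim_{i\to\infty} f(x_{g(i)-r}) = \varphi$. The engine is the per-step decrease~\eqref{eq:KanzowMehlitzDecay}, namely $f(x_{j+1}) \le \mu_j - \frac{c}{2\alpha}\norm{x_{j+1}-x_j}^2$ with $\alpha \le \oshort{\alpha}$; since $\mu_j = f(x_{g(j)}) \to \varphi$ and $f(x_{j+1}) \ge \inf f > -\infty$ (using boundedness below), the increments $\norm{x_{j+1}-x_j}$ must tend to zero. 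Uniform continuity then transfers this to $f$-values: $f(x_{g(i)-1}) \to \varphi$ via $|f(x_{g(i)}) - f(x_{g(i)-1})| \to 0$, and iterating $r$ times across the window (each step invoking uniform continuity to control $|f(x_{g(i)-r}) - f(x_{g(i)-r+1})|$) covers the whole window of width $l+1$. Since every index $i$ is within distance $l$ of some $g(j)$, this forces $f(x_i) \to \varphi$ for the full sequence. The main obstacle is carrying out the induction cleanly so that uniform continuity is applied to the increments $\norm{x_{g(i)-r+1}-x_{g(i)-r}} \to 0$ (which themselves come from~\eqref{eq:KanzowMehlitzDecay}) and ensuring the window-covering argument is uniform in $i$; boundedness below is precisely what guarantees the telescoped decreases are summable so that all increments vanish.
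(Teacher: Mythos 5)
Your treatments of claims 1--3 are correct and essentially the same as the paper's (the paper cites \cite[Lemma~4.1 and Corollary~4.1]{KanzowMehlitz} for the first two and proves the third exactly as you do). The problem is in claim~4, where your ``engine'' is a non sequitur. You assert that from $f(x_{j+1}) \le \mu_j - \frac{c}{2\alpha}\norm{x_{j+1}-x_j}^2$, together with $\mu_j \to \varphi$ and $f(x_{j+1}) \ge \inf f > -\infty$, ``the increments $\norm{x_{j+1}-x_j}$ must tend to zero.'' This does not follow: the inequality only gives $\frac{c}{2\alpha}\norm{x_{j+1}-x_j}^2 \le \mu_j - f(x_{j+1}) \le \mu_j - \inf f$, whose right-hand side converges to $\varphi - \inf f$, not to $0$. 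The increment is squeezed to zero only when \emph{both} $\mu_j$ and $f(x_{j+1})$ converge to the same limit $\varphi$ --- but the convergence of the $f(x_{j+1})$ is precisely what claim~4 asks you to prove, so invoking it here is circular. Likewise, your closing remark that ``boundedness below is precisely what guarantees the telescoped decreases are summable'' is invalid in the nonmonotone setting: since $\mu_j = f(x_{g(j)})$ rather than $f(x_j)$, the inequalities do not telescope, and no summability argument is available (this is exactly what distinguishes the nonmonotone case, $l \ge 1$, from the monotone one, $l = 0$).

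The repair is the induction you state at the outset but then bypass: one can only obtain vanishing increments at the special indices $g(i)-r$, for each \emph{fixed} offset $r$, and the induction must be run in the order (increments at offset $r$ vanish) $\Rightarrow$ (by uniform continuity, $f(x_{g(i)-r}) \to \varphi$) $\Rightarrow$ (increments at offset $r+1$ vanish). Concretely, for the base step write $f(x_{g(i)}) \le \mu_{g(i)-1} - \frac{c}{2\oshort{\alpha}}\norm{x_{g(i)}-x_{g(i)-1}}^2$ and note that $\mu_{g(i)-1} = f(x_{g(g(i)-1)}) \to \varphi$ and $f(x_{g(i)}) \to \varphi$, so the increment vanishes by a squeeze between two sequences with the same limit --- no appeal to boundedness below is needed for this step (boundedness below serves only to make $\varphi$ finite). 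This is the content of \cite[(27)]{KanzowMehlitz} and \cite[(A.9)]{JiaEtAl}, which the paper simply cites; the paper then finishes with a pigeonhole argument (extracting a subsequence along which the offset $p_j = g(i_j+l)-i_j$ is constant) where you instead propose a uniform-over-finitely-many-offsets covering argument. Your final covering step is fine once the offset lemma is in hand; it is the lemma itself that your proposal fails to establish.
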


\begin{proof}
The first two statements are \cite[Lemma~4.1 and Corollary~4.1]{KanzowMehlitz}.
For the third one, let $(x_{i_k})_{k \in \N}$ be a subsequence converging to $x$. Since the sequence $(f(x_{g(i)}))_{i\in\N}$ is monotonically nonincreasing, it has a limit in $\R \cup \{-\infty\}$. Thus,
\begin{equation*}
\lim_{i \to \infty} f(x_{g(i)})
= \lim_{k \to \infty} f(x_{g(i_k)})
\ge \lim_{k \to \infty} f(x_{i_k})
= f(x)
> -\infty.
\end{equation*}
It remains to prove the fourth statement. From the first statement and because $f$ is bounded from below, $(f(x_{g(i)}))_{i\in\N}$ converges to some limit $\varphi \in \R$. Assume, for the sake of contradiction, that $(f(x_i))_{i \in \N}$ does not converge to $\varphi$. Then, there exist $\rho \in (0, \infty)$ and a subsequence $(f(x_{i_j}))_{j \in \N}$ contained in $\R \setminus [\varphi-\rho, \varphi+\rho]$. For all $j \in \N$, define $p_j \coloneq g(i_j+l)-i_j \in \{0, \dots, l\}$. Then, there exist $p \in \{0, \dots, l\}$ and a subsequence $(p_{j_k})_{k \in \N}$ such that, for all $k \in \N$, $p_{j_k} = p$. By \cite[(27)]{KanzowMehlitz} or \cite[(A.9)]{JiaEtAl}, $(f(x_{g(i)-p}))_{i \in \N}$ converges to $\varphi$. Therefore, $(f(x_{g(i+l)-p}))_{i \in \N}$ converges to $\varphi$. Hence, $(f(x_{g(i_{j_k}+l)-p}))_{k \in \N}$ converges to $\varphi$. This is a contradiction since, for all $k \in \N$, $f(x_{g(i_{j_k}+l)-p}) = f(x_{i_{j_k}})$.
\end{proof}

\begin{proposition}
\label{prop:NonmonotonePGDAverage}
Let $(x_i)_{i \in \N}$ be a sequence generated by $\pgd$ (Algorithm~\ref{algo:PGD}) using the ``average'' rule. Then:
\begin{enumerate}
\item $(x_i)_{i \in \N}$ is contained in the sublevel set~\eqref{eq:InitialSublevelSet};
\item if  $(x_i)_{i \in \N}$ has an accumulation point, then $(f(x_i))_{i \in \N}$ and $(\mu_i)_{i\in\N}$ converge, toward the same (finite) value.
\end{enumerate}
\end{proposition}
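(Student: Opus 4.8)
The plan is to control the auxiliary sequence $(\mu_i)_{i \ge -1}$ generated by the ``average'' rule, show it is monotonically nonincreasing, and then use the convex-combination shape of the update to transfer convergence back and forth between $(\mu_i)$ and $(f(x_i))$.

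First I would record the one-step decrease. Since $x_{i+1}$ is produced by the $\pgd$ map applied to $(x_i, \mu_i)$, it satisfies the Armijo condition, which together with \eqref{eq:ProjectedTranslationIP} gives \eqref{eq:KanzowMehlitzDecay}, in particular $f(x_{i+1}) \le \mu_i$. Reindexing, $f(x_i) \le \mu_{i-1}$ for all $i \ge 1$, and the case $i = 0$ holds with equality because $\mu_{-1} = f(x_0)$. As $\mu_i = (1-p)\mu_{i-1} + p f(x_i)$ is a convex combination of $\mu_{i-1}$ and $f(x_i)$ with $f(x_i) \le \mu_{i-1}$, I obtain the sandwich
\[
f(x_i) \le \mu_i \le \mu_{i-1}
\]
for every $i$. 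The right inequality shows $(\mu_i)_{i \ge -1}$ is nonincreasing with $\mu_i \le \mu_{-1} = f(x_0)$, and then $f(x_i) \le \mu_i \le f(x_0)$ yields the first statement (containment in the sublevel set~\eqref{eq:InitialSublevelSet}).

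For the second statement, monotonicity makes $(\mu_i)$ converge to some $\mu^\star \in [-\infty, f(x_0)]$, and the crux is to exclude $\mu^\star = -\infty$. This is where the accumulation point enters: if $x_{i_k} \to x$, then continuity of $f$ under~\ref{it:continuously_diff} gives $f(x_{i_k}) \to f(x)$, while the sandwich $f(x_{i_k}) \le \mu_{i_k}$ passes to the limit to give $f(x) \le \mu^\star$, so $\mu^\star \ge f(x) > -\infty$ is finite. Finally, solving the defining recursion for $f(x_i)$ yields
\[
f(x_i) = \tfrac{1}{p}\mu_i - \tfrac{1-p}{p}\mu_{i-1},
\]
and letting $i \to \infty$ (both $\mu_i$ and $\mu_{i-1}$ tending to the same finite $\mu^\star$) gives $f(x_i) \to \mu^\star$. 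Hence $(f(x_i))$ and $(\mu_i)$ converge to the common finite value $\mu^\star$.

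The main obstacle is not computational but precisely the finiteness of $\mu^\star$: monotonicity only bounds $(\mu_i)$ from above, and the ``average'' rule supplies no a priori lower bound on $f$. The resolution is that a single accumulation point, combined with the lower sandwich $f(x_i) \le \mu_i$ coming from the Armijo decrease, furnishes the missing lower bound on the limit. I would emphasize that this is exactly where the existence of an accumulation point is indispensable, in contrast with the ``max'' rule in Proposition~\ref{prop:NonmonotonePGD}, which instead invokes boundedness from below of $f$.
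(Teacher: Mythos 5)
Your proof is correct, and its skeleton matches the paper's up to the crucial last step: monotonicity of $(\mu_i)_{i\in\N}$ from $f(x_i) \le \mu_{i-1}$, containment in the sublevel set~\eqref{eq:InitialSublevelSet}, and finiteness of the limit $\mu^\star$ obtained exactly as in the paper, by passing the Armijo-induced inequality along the convergent subsequence (the paper uses $f(x_{i_k}) \le \mu_{i_k-1}$, you use the equivalent sandwich $f(x_{i_k}) \le \mu_{i_k}$). Where you genuinely diverge is in transferring convergence from $(\mu_i)_{i\in\N}$ to $(f(x_i))_{i\in\N}$. The paper proceeds by a liminf/limsup argument: it extracts a subsequence $(x_{j_k})_{k\in\N}$ along which $f$ tends to $\liminf_{i\to\infty} f(x_i)$, passes the update rule $\mu_{j_k} = (1-p)\mu_{j_k-1} + p f(x_{j_k})$ to the limit to conclude $\varphi = \liminf_{i\to\infty} f(x_i)$, and then invokes $f(x_k) \le \mu_{k-1}$ once more to obtain $\limsup_{k\to\infty} f(x_k) \le \varphi$. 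You instead invert the recursion pointwise, writing $f(x_i) = \tfrac{1}{p}\mu_i - \tfrac{1-p}{p}\mu_{i-1}$, and let $i \to \infty$: since $\mu_i$ and $\mu_{i-1}$ tend to the same finite $\mu^\star$, the whole sequence $(f(x_i))_{i\in\N}$ converges to $\mu^\star$ in one stroke, with no subsequence extraction. Your route is shorter and arguably cleaner; its only delicate point is that the affine combination is indeterminate when $\mu^\star = -\infty$ and $p<1$, but you establish finiteness of $\mu^\star$ beforehand, exactly where the paper does, so the argument is sound. Both proofs use the accumulation point in the same indispensable way, as you correctly emphasize.
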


\begin{proof}
The sequence $(\mu_i)_{i\in\N}$ is monotonically nonincreasing since, for all $i \in \N$, $f(x_i) \le \mu_{i-1}$, hence $\mu_i=(1-p)\mu_{i-1}+ pf(x_i) \le \mu_{i-1}$. Therefore, for all $i \in \N$,
\begin{equation*}
f(x_i) \le \mu_{i-1} \le \mu_{-1} = f(x_0),
\end{equation*}
meaning that $(x_i)_{i\in\N}$ is contained in the sublevel set~\eqref{eq:InitialSublevelSet}.

Let us prove the second item of the proposition. Assume that $(x_i)_{i\in\N}$ has an accumulation point $x$. Let $(x_{i_k})_{k\in\N}$ be a subsequence converging to~$x$. Observe that
\begin{equation*}
\lim_{k \to \infty} f(x_{i_k}) = f(x),
\end{equation*}
since $f$ is differentiable, and in particular continuous, at $x$.
As $(\mu_i)_{i\in\N}$ is monotonically nonincreasing, it has a limit $\varphi \in \R \cup \{-\infty\}$. For all $k \in \N$,
\begin{equation*}
f(x_{i_k}) \le \mu_{i_k-1},
\end{equation*}
and letting $k$ tend to infinity yields
\begin{equation*}
f(x) = \lim_{k\to\infty} f(x_{i_k}) \le \lim_{k\to\infty} \mu_{i_k-1} = \varphi.
\end{equation*}
In particular, $\varphi$ is finite.

Let us show that $\varphi = \liminf_{i\to\infty} f(x_i)$. Let $(x_{j_k})_{k\in\N}$ be a subsequence such that
\begin{equation*}
\lim_{k \to \infty} f(x_{j_k}) = \liminf_{i\to\infty} f(x_i).
\end{equation*}
For all $k \in \N$,
\begin{align*}
\mu_{j_k} = (1-p) \mu_{j_k-1} + p f(x_{j_k}),
\end{align*}
and letting $k$ tend to infinity yields
\begin{equation*}
\varphi = (1-p) \varphi + p \liminf_{i\to\infty} f(x_i).
\end{equation*}
As $p>0$, this implies $\varphi = \liminf_{i\to\infty} f(x_i)$ (and, in particular, $\liminf_{i\to\infty} f(x_i) > -\infty$). To conclude, we observe that, for all $k \in \N$,
\begin{equation*}
f(x_k) \le \mu_{k-1},
\end{equation*}
hence
\begin{equation*}
\limsup_{k\to\infty} f(x_k) 
\leq \lim_{k\to\infty} \mu_{k-1} = \varphi = \liminf_{k\to\infty} f(x_k).
\end{equation*}
Therefore, $(f(x_k))_{k\in\N}$ converges to $\varphi$.
\end{proof}

\section{Convergence analysis for a continuous gradient}
\label{sec:ConvergenceAnalysisForContinuousGradient}
In this section, $\pgd$ (Algorithm~\ref{algo:PGD}) is analyzed under hypothesis~\ref{it:continuously_diff}. As mentioned after Remark~\ref{rem:FiniteOrInfinite}, only the nontrivial case where an infinite sequence is generated is considered here. Specifically, the first part of the second item of Theorem~\ref{thm:StationarityAccumulationPointsPGD}, restated in Theorem~\ref{thm:PGDaccumulatesAtBStationaryPoints} for convenience, is proven.

\begin{theorem}
\label{thm:PGDaccumulatesAtBStationaryPoints}
Let $(x_i)_{i \in \N}$ be a sequence generated by $\pgd$ (Algorithm~\ref{algo:PGD}), under~\ref{it:continuously_diff}. Then, all accumulation points of $(x_i)_{i \in \N}$ are B-stationary for~\eqref{eq:MinDiffFunctionClosedSet}.
If, moreover, $(x_i)_{i \in \N}$ has an isolated accumulation point, then $(x_i)_{i \in \N}$ converges.
\end{theorem}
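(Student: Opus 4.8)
The plan is to establish the first assertion by contradiction, producing near a hypothetical non-B-stationary accumulation point a \emph{uniform} descent that the monitored values cannot absorb, and to derive the second assertion from a standard topological argument once the increments are known to vanish. Assume, then, that $\oshort{x} \in C$ is an accumulation point of $(x_i)_{i \in \N}$ with $-\nabla f(\oshort{x}) \notin \regnorcone{C}{\oshort{x}}$, and fix a subsequence $(x_{i_k})_{k \in \N}$ converging to $\oshort{x}$. Since $\oshort{x}$ is not B-stationary, there is $w \in \tancone{C}{\oshort{x}}$ with $\ip{\nabla f(\oshort{x})}{w} < 0$; unwinding the definition of the tangent cone, I would fix once and for all a competitor $z \in C$ arbitrarily close to $\oshort{x}$ with $\ip{\nabla f(\oshort{x})}{z - \oshort{x}} < 0$ and with $\norm{z - \oshort{x}}^2$ as small as desired relative to $|\ip{\nabla f(\oshort{x})}{z - \oshort{x}}|$. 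The point of $z$ is that, being an admissible competitor for every projection onto $C$, it transfers the descent available at $\oshort{x}$ to all nearby iterates, bypassing any appeal to the (possibly failing) lower semicontinuity of a stationarity measure.

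The main line of the argument treats the favorable case where the accepted step size $\alpha_{i_k}$ is bounded below, say $\alpha_{i_k} \ge \ushort{\alpha}$. Writing $g_k \coloneq \nabla f(x_{i_k})$ and testing the minimality of $x_{i_k+1} \in \proj{C}{x_{i_k} - \alpha_{i_k} g_k}$ against $z$, a short computation (expanding the squared norms and discarding $-\norm{x_{i_k+1} - x_{i_k}}^2 \le 0$) yields
\begin{equation*}
\ip{g_k}{x_{i_k+1} - x_{i_k}} \le \frac{\norm{z - x_{i_k}}^2}{2\alpha_{i_k}} + \ip{g_k}{z - x_{i_k}}.
\end{equation*}
Because $x_{i_k} \to \oshort{x}$, $\nabla f$ is continuous, and $\alpha_{i_k} \in [\ushort{\alpha}, \oshort{\alpha}]$, the right-hand side is, for $k$ large, at most a fixed negative constant $-\varepsilon$, by the choice of $z$. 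The Armijo test satisfied at acceptance then gives $\mu_{i_k} - f(x_{i_k+1}) \ge -c\,\ip{g_k}{x_{i_k+1} - x_{i_k}} \ge c\,\varepsilon$, a decrease bounded away from $0$ along the subsequence. This contradicts the convergence of the monitored values furnished by Propositions~\ref{prop:NonmonotonePGD} and~\ref{prop:NonmonotonePGDAverage}, which forces the per-step decreases $\mu_{i_k} - f(x_{i_k+1})$ to tend to $0$.

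The main obstacle is therefore to exclude the remaining possibility that backtracking drives $\alpha_{i_k}$ to $0$ along the subsequence, which would destroy the estimate above (the bound blows up as $\alpha_{i_k} \to 0$). What must be shown is a neighborhood, uniform-in-$k$ analogue of the termination guarantee of Proposition~\ref{prop:AcceptedStep}: near the non-B-stationary point $\oshort{x}$, sufficiently small step sizes are always accepted, so the accepted $\alpha_{i_k}$ cannot be arbitrarily small. I would argue by examining, for a trial step $\hat\alpha$, the candidate $y \in \proj{C}{x_{i_k} - \hat\alpha g_k}$, which by~\eqref{eq:ProjectedTranslationDistance} satisfies $\norm{y - x_{i_k}} \le 2\hat\alpha\norm{g_k}$ and by~\eqref{eq:ProjectedTranslationIP} satisfies $\ip{g_k}{y - x_{i_k}} \le -\tfrac{1}{2\hat\alpha}\norm{y - x_{i_k}}^2$, and showing that the Armijo inequality $f(y) - f(x_{i_k}) \le c\,\ip{g_k}{y - x_{i_k}}$ holds once $\hat\alpha$ is below a threshold that is uniform over the subsequence. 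The delicate point, and the genuinely new difficulty compared with the Lipschitz setting of section~\ref{sec:ConvergenceAnalysisForLocallyLipschitzContinuousGradient}, is that under~\ref{it:continuously_diff} the gradient is merely continuous and $f$ is differentiable only at points of $C$, so one cannot integrate $\nabla f$ along the segment $[x_{i_k}, y]$, which may leave $C$, nor invoke a quadratic bound such as~\eqref{eq:InequalityLipschitzContinuousGradient}. I would instead control $f(y) - f(x_{i_k})$ through first-order expansions anchored at the single point $\oshort{x}$, combined with the continuity of $\nabla f$ and the fixed descent margin carried by $z$, so as to obtain the sufficient decrease for small steps with a threshold that does not degenerate as $x_{i_k} \to \oshort{x}$; making this error control uniform is where the main work lies. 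Once $\alpha_{i_k}$ is bounded below, the previous paragraph delivers the contradiction, proving that every accumulation point is B-stationary.

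Finally, for the convergence statement, the sufficient-decrease inequality~\eqref{eq:KanzowMehlitzDecay} together with the convergence of the monitored values yields $\norm{x_{i+1} - x_i} \to 0$ as $i \to \infty$ (immediately for the ``average'' rule, where $\mu_{i-1} - f(x_i) \to 0$, and for the ``max'' rule through the bookkeeping underlying Proposition~\ref{prop:NonmonotonePGD}). I would then invoke the standard fact that a sequence in a finite-dimensional space with vanishing increments and an isolated accumulation point $\oshort{x}$ converges to $\oshort{x}$: fixing $\rho > 0$ so that $\oshort{x}$ is the only accumulation point in $\ball[\oshort{x}, \rho]$, the vanishing increments prevent the sequence from crossing the compact annulus $\ball[\oshort{x}, \rho] \setminus \ball(\oshort{x}, \rho/2)$ infinitely often without accumulating there, which would contradict the isolation of $\oshort{x}$. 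Hence $(x_i)_{i \in \N}$ converges to $\oshort{x}$.
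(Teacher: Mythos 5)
Your overall architecture matches the paper's: a competitor point $z$ extracted from a tangent direction at the putative non-B-stationary accumulation point $\oshort{x}$, a lower bound on the accepted step sizes near $\oshort{x}$, and a contradiction between the resulting uniform decrease and the convergence of the monitored values. Your bounded-step-size computation (testing the minimality of $x_{i_k+1} \in \proj{C}{x_{i_k}-\alpha_{i_k}\nabla f(x_{i_k})}$ against $z$) is a legitimate variant of the paper's third part, which instead passes to the limit in the identity $\norm{x_{i_k}-\alpha_{i_k}\nabla f(x_{i_k})-x_{i_k+1}} = \dist(x_{i_k}-\alpha_{i_k}\nabla f(x_{i_k}), C)$ to conclude that $\oshort{x}$ would be P-stationary. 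But the step you explicitly defer (``making this error control uniform is where the main work lies'') \emph{is} the heart of the proof: it is Propositions~\ref{prop:ArbitrarySmallAlpha} and~\ref{prop:AcceptedStep}, which occupy most of section~\ref{sec:ConvergenceAnalysisForContinuousGradient}. Moreover, the target you state --- all sufficiently small step sizes are accepted, with a threshold uniform over $x$ near $\oshort{x}$ --- is not what the competitor can deliver: once $\alpha\norm{\nabla f(x)}$ is small compared with $\norm{x-\oshort{x}}$ or with $\norm{z-\oshort{x}}$, the point $z$ certifies nothing about $\proj{C}{x-\alpha\nabla f(x)}$, and the first-order remainders anchored at $\oshort{x}$ (the only anchor available under~\ref{it:continuously_diff}) are of order $o(\norm{x-\oshort{x}})+o(\norm{y-\oshort{x}})$, which cannot be absorbed by the descent term when $\norm{y-x} \approx \alpha\norm{\nabla f(x)} \to 0$ with $x$ fixed. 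What is provable --- and suffices, because backtracking shrinks $\alpha$ by the fixed factor $\beta$ --- is acceptance for all $\alpha$ in a band $[\alpha_{\oshort{x}}, \alpha_{\oshort{x}}/\beta]$ whose scale is tied to that of the competitor, on a neighborhood whose radius is in turn tied to $\alpha_{\oshort{x}}$; the paper additionally needs the lower bound $\norm{y-x} \ge (1-\kappa)\alpha\norm{\nabla f(x)}$ from Proposition~\ref{prop:ArbitrarySmallAlpha} precisely to dominate those remainders. Without carrying this out, the proof does not address the question that was open.

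There is a second gap, specific to the ``max'' rule. Your contradiction needs $\mu_{i_k} - f(x_{i_k+1}) \to 0$, and your last paragraph needs $\norm{x_{i+1}-x_i} \to 0$; neither follows from Proposition~\ref{prop:NonmonotonePGD} as stated. That proposition gives $\mu_{i_k} = f(x_{g(i_k)}) \to \varphi$, but says nothing about $f(x_{i_k+1})$ (its fourth statement requires $f$ bounded below and uniformly continuous on a set containing the iterates, which is not assumed here). A decrease $\mu_{i_k}-f(x_{i_k+1}) \ge c\varepsilon$ along a subsequence is not directly inconsistent with $\mu_i \downarrow \varphi$, because the maximum over the window defining $\mu_{i}$ for $i > i_k$ may be attained at indices other than $i_k+1$; ruling this scenario out requires showing that $x_{i_k+1} \to \oshort{x}$ and $f(\oshort{x}) = \varphi$, which is exactly the content of Proposition~\ref{prop:convergence_ik} --- whose max-rule case is itself nontrivial (iterated extraction of subsequences over the window offsets $s \in \{0,\dots,l+1\}$ and the minimal-$s_1$ argument). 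The same proposition, combined with the subsequential form of your annulus argument \cite[Lemma~4.10]{MoreSorensen}, is what rescues the isolated-accumulation-point claim under the ``max'' rule. For the ``average'' rule, both of your appeals are correct as stated, via Proposition~\ref{prop:NonmonotonePGDAverage} and~\eqref{eq:KanzowMehlitzDecay}.
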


The proof is divided into three parts. First, in section~\ref{subsec:PGDmapTerminantesBounded}, we show that, in a neighborhood of every point that is not B-stationary for~\eqref{eq:MinDiffFunctionClosedSet}, the $\pgd$ map (Algorithm~\ref{algo:PGDmap}) terminates after a bounded number of iterations. Then, in section~\ref{subsec:converges_to_x}, we prove that, if a subsequence $(x_{i_k})_{k \in \N}$ converges, then $(x_{i_k+1})_{k \in \N}$ also does, to the same limit. Finally, we combine the first two parts in section~\ref{subsec:proof_accumulates}: roughly, if $(x_{i_k})_{k \in \N}$ converges to $x$, then, from the second part,
\begin{equation*}
\norm{x_{i_k+1}-x_{i_k}} \to 0\quad
\text{when } k \to \infty,
\end{equation*}
but, from the first part, if $x$ is not B-stationary for~\eqref{eq:MinDiffFunctionClosedSet}, then the iterates of $\pgd$ move by at least a constant amount at each iteration. It is therefore impossible that $(x_{i_k})_{k \in \N}$ converges to a point that is not B-stationary for~\eqref{eq:MinDiffFunctionClosedSet}.

\subsection{First part: analysis of the PGD map}
\label{subsec:PGDmapTerminantesBounded}
In this section, we show that, if $\ushort{x} \in C$ is not B-stationary for~\eqref{eq:MinDiffFunctionClosedSet}, then the while loop in Algorithm~\ref{algo:PGDmap} terminates, in some neighborhood of $\ushort{x}$, for nonvanishing values of $\alpha$.
The intuition for this proof is that, for every $x$ close to $\ushort{x}$ and for every $y\in\proj{C}{x-\alpha\nabla f(x)}$,
\begin{equation*}
f(y) = f(x) + \ip{\nabla f(x)}{y-x} + \text{some remainder}.
\end{equation*}
The inner product $\ip{\nabla f(x)}{y-x}$ is negative, and larger in absolute value than some fraction of $\norm{\nabla f(x)}\norm{y-x}$ (Proposition~\ref{prop:ArbitrarySmallAlpha}). On the other hand, if $\alpha$ is small enough, the remainder (upper bounded in Proposition~\ref{prop:AcceptedStep}) is smaller than some arbitrarily small fraction of $\norm{\nabla f(x)}\norm{y-x}$. Therefore, for $\alpha$ small enough,
\begin{equation*}
f(y) < f(x) + c\ip{\nabla f(x)}{y-x}.
\end{equation*}

\begin{proposition}
\label{prop:ArbitrarySmallAlpha}
Assume that $f$ satisfies~\ref{it:continuously_diff}. Let $\ushort{x} \in C$ be non-B-stationary for~\eqref{eq:MinDiffFunctionClosedSet}, and $w \in \tancone{C}{\ushort{x}}$ be such that
\begin{equation}
\label{eq:def_w}
\ip{w}{-\nabla f(\ushort{x})} > 0.
\end{equation}
Let $\beta \in (0, 1)$. Define $\kappa \coloneq \sqrt{1 - \frac{\beta \ip{w}{-\nabla f(\ushort{x})}^2}{8 \norm{w}^2 \norm{\nabla f(\ushort{x})}^2}} \in (0, 1)$.
For every $\varepsilon \in (0, \infty)$, there exist $\alpha_{\ushort{x}} \in (0, \varepsilon]$ and $\oshort{\rho}(\alpha_{\ushort{x}}) \in (0, \infty)$ such that, for all $x \in \ball(\ushort{x}, \oshort{\rho}(\alpha_{\ushort{x}})) \cap C$ and $\alpha \in [\alpha_{\ushort{x}}, \alpha_{\ushort{x}}/\beta]$,
\begin{equation*}
\dist(x-\alpha\nabla f(x), C) \le \kappa \alpha \norm{\nabla f(x)},
\end{equation*}
which implies, for all $y \in\proj{C}{x-\alpha\nabla f(x)}$,
\begin{equation*}
\ip{\nabla f(x)}{y-x} \le - \sqrt{1-\kappa^2} \norm{\nabla f(x)} \norm{y-x}.
\end{equation*}
\end{proposition}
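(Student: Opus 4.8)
The plan is to bound $\dist(x-\alpha\nabla f(x),C)$ by exhibiting an explicit feasible competitor built from the tangent vector $w$, and then to convert that distance bound into the inner-product bound by completing the square. First note that \eqref{eq:def_w} forces $w\neq 0$ and $\nabla f(\ushort{x})\neq 0$, so $a:=\ip{w}{-\nabla f(\ushort{x})}>0$ and the denominator in the definition of $\kappa$ does not vanish. Since $w\in\tancone{C}{\ushort{x}}$, the definition of the tangent cone furnishes sequences $(t_i)_{i\in\N}$ in $(0,\infty)$ and $(w_i)_{i\in\N}$ in $\mathcal{E}$ with $w_i\to w$ and $\ushort{x}+t_iw_i\in C$ for all $i$; because $w\neq 0$ one checks that $t_i\to 0$. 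The point $\ushort{x}+t_iw_i\in C$ will be my competitor, so that for every $x$ and $\alpha$ one has $\dist(x-\alpha\nabla f(x),C)^2\le \norm{t_iw_i+(\ushort{x}-x)+\alpha\nabla f(x)}^2$, and it suffices to make the right-hand side at most $\kappa^2\alpha^2\norm{\nabla f(x)}^2$.

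The core is the idealized estimate obtained by setting $x=\ushort{x}$, $w_i=w$, and discarding $\ushort{x}-x$. I would set $\alpha_{\ushort{x}}:=t_i\norm{w}^2/a$, chosen so that as $\alpha$ runs over $[\alpha_{\ushort{x}},\alpha_{\ushort{x}}/\beta]$ the ratio $\tau:=t_i\norm{w}^2/(\alpha a)$ runs over $[\beta,1]$. Expanding the square and using $\ip{w}{\nabla f(\ushort{x})}=-a$ together with the identity $(1-\kappa^2)\norm{\nabla f(\ushort{x})}^2=\beta a^2/(8\norm{w}^2)$ gives
\begin{equation*}
\norm{t_iw+\alpha\nabla f(\ushort{x})}^2-\kappa^2\alpha^2\norm{\nabla f(\ushort{x})}^2=\frac{\alpha^2a^2}{\norm{w}^2}\Bigl(\tau^2-2\tau+\tfrac{\beta}{8}\Bigr).
\end{equation*}
The quadratic $\tau\mapsto\tau^2-2\tau+\beta/8$ is nonincreasing on $[\beta,1]$ (its vertex is at $\tau=1$) and equals $\beta(\beta-15/8)<0$ at $\tau=\beta$, so it is bounded above throughout $[\beta,1]$ by this negative constant; this is precisely where the factor $\beta/8$ in the definition of $\kappa$ is used. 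Hence the idealized left-hand side is at most $-m\alpha^2$ for a constant $m>0$ depending only on $\ushort{x}$, $w$, and $\beta$.

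It remains to absorb the three perturbations — $w_i$ in place of $w$, $\nabla f(x)$ in place of $\nabla f(\ushort{x})$, and the shift $\eta:=\ushort{x}-x$ — into this margin $m\alpha^2$. The difference between $\norm{t_iw_i+\eta+\alpha\nabla f(x)}^2-\kappa^2\alpha^2\norm{\nabla f(x)}^2$ and the idealized quantity splits into a term of size $O(\alpha^2)$ scaled by a factor controlled by $\norm{w_i-w}$ and $\norm{\nabla f(x)-\nabla f(\ushort{x})}$, plus the cross and square terms $2\ip{\eta}{t_iw_i+\alpha\nabla f(x)}+\norm{\eta}^2$ of size $O(\norm{\eta}\,\alpha)+O(\norm{\eta}^2)$. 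I would therefore first pick the index $i$ large enough that $t_i$ is small (forcing $\alpha_{\ushort{x}}\le\varepsilon$) and $w_i$ is close enough to $w$, and then, using the continuity of $\nabla f$ from~\ref{it:continuously_diff}, pick $\oshort{\rho}(\alpha_{\ushort{x}})>0$ small enough that on $\ball(\ushort{x},\oshort{\rho}(\alpha_{\ushort{x}}))\cap C$ the perturbations stay below $m\alpha^2$ for all $\alpha\in[\alpha_{\ushort{x}},\alpha_{\ushort{x}}/\beta]$. The key scaling point — and the main bookkeeping obstacle — is that the margin is only of order $\alpha^2\gtrsim\alpha_{\ushort{x}}^2$ whereas the cross term is of order $\norm{\eta}\,\alpha$, so $\oshort{\rho}$ must be taken small \emph{compared with} $\alpha_{\ushort{x}}$ rather than merely small in absolute terms; this is exactly why the radius is allowed to depend on $\alpha_{\ushort{x}}$. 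This yields $\dist(x-\alpha\nabla f(x),C)\le\kappa\alpha\norm{\nabla f(x)}$.

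Finally, the displayed implication is a short computation. For $y\in\proj{C}{x-\alpha\nabla f(x)}$ one has $\norm{y-x+\alpha\nabla f(x)}=\dist(x-\alpha\nabla f(x),C)\le\kappa\alpha\norm{\nabla f(x)}$; squaring and expanding gives $2\alpha\ip{\nabla f(x)}{y-x}\le-\norm{y-x}^2-(1-\kappa^2)\alpha^2\norm{\nabla f(x)}^2$, and applying the arithmetic--geometric mean inequality to the two terms on the right bounds that expression by $-2\sqrt{1-\kappa^2}\,\alpha\norm{\nabla f(x)}\norm{y-x}$. Dividing by $2\alpha>0$ gives $\ip{\nabla f(x)}{y-x}\le-\sqrt{1-\kappa^2}\norm{\nabla f(x)}\norm{y-x}$, as required.
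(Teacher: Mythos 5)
Your proposal is correct and takes essentially the same approach as the paper's proof: the paper likewise uses the tangent-cone sequence as feasible competitors, calibrates $\alpha_{\ushort{x}}$ to the ratio $\norm{w_i-\ushort{x}}^2/\ip{w_i-\ushort{x}}{-\nabla f(\ushort{x})}$ (your $t_i\norm{w}^2/a$), absorbs the perturbations coming from $w_i \ne w$, $\nabla f(x) \ne \nabla f(\ushort{x})$, and $x \ne \ushort{x}$ by choosing $\oshort{\rho}(\alpha_{\ushort{x}})$ small relative to $\alpha_{\ushort{x}}$, and converts the distance bound into the inner-product bound by the same completing-the-square argument (minimizing over $\norm{y-x}$ rather than invoking AM--GM, which is equivalent). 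The only difference is organizational: you isolate an exact identity in the rescaled variable $\tau$ with a clean negative margin $-m\alpha^2$ and treat everything else as perturbation, whereas the paper brackets the same ratio between $\alpha_{\ushort{x}}/2$ and $\alpha_{\ushort{x}}$ and chains the inequalities directly.
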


\begin{proof}
Let $\varepsilon \in (0, \infty)$ be fixed. We show that there exist $\alpha_{\ushort{x}} \in (0, \varepsilon]$ and $\oshort{\rho}(\alpha_{\ushort{x}}) \in (0, \infty)$ satisfying the required property.

Let $(w_i)_{i\in\N}$ be a sequence in $C$ converging to $\ushort{x}$, and $(t_i)_{i\in\N}$ be a sequence in $(0, \infty)$ such that
\begin{equation*}
\frac{w_i - \ushort{x}}{t_i} \xrightarrow{i\to\infty} w.
\end{equation*}
From the definition of $w$ in~\eqref{eq:def_w}, it holds for all $i \in \N$ large enough that
\begin{equation}
\label{eq:def_i_1}
\ip{w_i - \ushort{x}}{-\nabla f(\ushort{x})} > 0.
\end{equation}
As $\frac{1}{t_i}\frac{\norm{w_i-\ushort{x}}^2}{\ip{w_i-\ushort{x}}{-\nabla f(\ushort{x})}} \xrightarrow{i\to\infty} \frac{\norm{w}^2}{\ip{w}{-\nabla f(\ushort{x})}}$ and $t_i \xrightarrow{i\to\infty} 0$, it also holds for all $i \in \N$ large enough that
\begin{equation}
\label{eq:def_i_2}
\frac{\norm{w_i-\ushort{x}}^2}{\ip{w_i-\ushort{x}}{-\nabla f(\ushort{x})}} < \varepsilon.
\end{equation}
Similarly, it holds for all $i \in \N$ large enough that
\begin{equation}
\label{eq:def_i_3}
\frac{\ip{w_i-\ushort{x}}{-\nabla f(\ushort{x})}^2}{\norm{w_i-\ushort{x}}^2}
> \frac{\ip{w}{-\nabla f(\ushort{x})}^2}{2\norm{w}^2 }.
\end{equation}
Fix $i \in \N$ satisfying \eqref{eq:def_i_1}, \eqref{eq:def_i_2}, and \eqref{eq:def_i_3}.
Pick $\alpha_{\ushort{x}}$ such that
\begin{equation*}
\frac{\alpha_{\ushort{x}}}{2}
< \frac{\norm{w_i-\ushort{x}}^2}{\ip{w_i-\ushort{x}}{-\nabla f(\ushort{x})}}
< \alpha_{\ushort{x}}
< \varepsilon.
\end{equation*}
Since $\nabla f$ is continuous at $\ushort{x}$, there exists $\rho_0 \in (0, \infty)$ such that, for all $x \in \ball[\ushort{x}, \rho_0] \cap C$,
\begin{subequations}
\begin{gather}
\label{eq:def_rho0_1}
\ip{w_i-\ushort{x}}{-\nabla f(x)} > 0,\\
\label{eq:def_rho0_2}
\frac{\alpha_{\ushort{x}}}{2}
< \frac{\norm{w_i-\ushort{x}}^2}{\ip{w_i-\ushort{x}}{-\nabla f(x)}}
< \alpha_{\ushort{x}},\\
\label{eq:def_rho0_3}
\frac{\ip{w_i-\ushort{x}}{-\nabla f(x)}^2}{\norm{w_i-\ushort{x}}^2 \norm{\nabla f(x)}^2}
> \frac{\ip{w}{-\nabla f(\ushort{x})}^2}{2\norm{w}^2 \norm{\nabla f(\ushort{x})}^2}.
\end{gather}
\end{subequations}
We now establish the first inequality we have to prove: for an adequate value of $\oshort{\rho}(\alpha_{\ushort{x}})$, it holds for all $x \in \ball(\ushort{x}, \oshort{\rho}(\alpha_{\ushort{x}})) \cap C$ and $\alpha \in [\alpha_{\ushort{x}}, \alpha_{\ushort{x}}/\beta]$ that
\begin{equation*}
\norm{x - \alpha \nabla f(x) - y} \le \kappa \alpha \norm{\nabla f(x)}\quad
\forall y \in \proj{C}{x-\alpha\nabla f(x)},
\end{equation*}
which is equivalent to $\dist(x-\alpha\nabla f(x), C) \le \kappa \alpha \norm{\nabla f(x)}$.

For the moment, let $\oshort{\rho}(\alpha_{\ushort{x}}) \in (0, \rho_0]$ be arbitrary. For all $x \in \ball(\ushort{x}, \oshort{\rho}(\alpha_{\ushort{x}})) \cap C$, $\alpha \in [\alpha_{\ushort{x}}, \alpha_{\ushort{x}}/\beta]$, and $y \in \proj{C}{x-\alpha\nabla f(x)}$,
\begin{align*}
\norm{x - \alpha \nabla f(x) - y}^2
&\le \norm{x - \alpha \nabla f(x) - w_i}^2 \\
&= \norm{\ushort{x}-\alpha \nabla f(x) - w_i}^2 + 2 \ip{\ushort{x}-x}{\alpha \nabla f(x) + w_i-\ushort{x}} + \norm{\ushort{x}-x}^2 \\
&\le \norm{\ushort{x}-\alpha \nabla f(x) - w_i}^2 \\
&\qquad + 2 \oshort{\rho}(\alpha_{\ushort{x}}) \left(\alpha \norm{\nabla f(x)} + \norm{w_i-\ushort{x}}\right) + \oshort{\rho}(\alpha_{\ushort{x}})^2 \\
&\le \norm{\ushort{x}-\alpha \nabla f(x) - w_i}^2 \\
&\qquad + 2 \oshort{\rho}(\alpha_{\ushort{x}}) \left(\alpha \max_{z \in \ball[\ushort{x}, \rho_0] \cap C} \norm{\nabla f(z)} + \norm{w_i-\ushort{x}}\right) + \oshort{\rho}(\alpha_{\ushort{x}})^2 \\
&= \alpha^2 \norm{\nabla f(x)}^2 -2 \alpha \ip{w_i-\ushort{x}}{-\nabla f(x)} + \norm{w_i-\ushort{x}}^2 \\
&\qquad + 2 \oshort{\rho}(\alpha_{\ushort{x}}) \left(\alpha \max_{z \in \ball[\ushort{x}, \rho_0] \cap C} \norm{\nabla f(z)} + \norm{w_i-\ushort{x}}\right) + \oshort{\rho}(\alpha_{\ushort{x}})^2 \\
&\le \alpha^2 \norm{\nabla f(x)}^2 - \alpha \ip{w_i-\ushort{x}}{-\nabla f(x)} \\
&\qquad + 2 \oshort{\rho}(\alpha_{\ushort{x}}) \left(\frac{\alpha_{\ushort{x}}}{\beta} \max_{z \in \ball[\ushort{x}, \rho_0] \cap C} \norm{\nabla f(z)} + \norm{w_i-\ushort{x}}\right)
+ \oshort{\rho}(\alpha_{\ushort{x}})^2,
\end{align*}
where the last inequality follows from \eqref{eq:def_rho0_2} and the fact that $\alpha_{\ushort{x}} \le \alpha \le \frac{\alpha_{\ushort{x}}}{\beta}$.
Now, choose $\oshort{\rho}(\alpha_{\ushort{x}}) \in (0, \rho_0]$ small enough to ensure
\begin{align*}
&2 \oshort{\rho}(\alpha_{\ushort{x}}) \left(\frac{\alpha_{\ushort{x}}}{\beta} \max_{z \in \ball[\ushort{x}, \rho_0] \cap C} \norm{\nabla f(z)} + \norm{w_i-\ushort{x}}\right) + \oshort{\rho}(\alpha_{\ushort{x}})^2 \\
&\le \frac{\alpha_{\ushort{x}}}{2} \min_{z \in \ball[\ushort{x}, \rho_0] \cap C} \ip{w_i-\ushort{x}}{-\nabla f(z)}.
\end{align*}
Note that the right-hand side of this inequality is positive, from \eqref{eq:def_rho0_1}.
Combining this definition with the previous inequality, we arrive at
\begin{align*}
\norm{x - \alpha \nabla f(x) - y}^2
&\le \alpha^2 \norm{\nabla f(x)}^2 - \frac{\alpha}{2} \ip{w_i-\ushort{x}}{-\nabla f(x)}\\
&= \alpha^2 \norm{\nabla f(x)}^2 \left(1 - \frac{\ip{w_i-\ushort{x}}{-\nabla f(x)}}{2 \alpha \norm{\nabla f(x)}^2}\right)\\
&\le \alpha^2 \norm{\nabla f(x)}^2 \left(1 - \frac{\beta \ip{w_i-\ushort{x}}{-\nabla f(x)}}{2 \alpha_{\ushort{x}} \norm{\nabla f(x)}^2}\right)
\text{ as $\alpha \le \frac{\alpha_{\ushort{x}}}{\beta}$}\\
&\le \alpha^2 \norm{\nabla f(x)}^2 \left(1 - \frac{\beta \ip{w_i-\ushort{x}}{-\nabla f(x)}^2}{4 \norm{w_i-\ushort{x}}^2 \norm{\nabla f(x)}^2}\right)
\text{ from~\eqref{eq:def_rho0_2}}\\
&\le \alpha^2 \norm{\nabla f(x)}^2 \left(1 - \frac{\beta \ip{w}{-\nabla f(\ushort{x})}^2}{8 \norm{w}^2 \norm{\nabla f(\ushort{x})}^2}\right)
\text{ from~\eqref{eq:def_rho0_3}}\\
&= \kappa^2 \alpha^2 \norm{\nabla f(x)}^2.
\end{align*}
Thus, for all $x \in \ball(\ushort{x}, \oshort{\rho}(\alpha_{\ushort{x}})) \cap C$, $\alpha \in [\alpha_{\ushort{x}}, \alpha_{\ushort{x}}/\beta]$, and $y \in \proj{C}{x-\alpha\nabla f(x)}$,
\begin{equation*}
\norm{x - \alpha \nabla f(x) - y} \le \kappa \alpha \norm{\nabla f(x)}.
\end{equation*}
To conclude, we show that this inequality implies
\begin{equation}
\label{eq:ip_xy_nabla}
\ip{\frac{y-x}{\norm{y-x}}}{\frac{\nabla f(x)}{\norm{\nabla f(x)}}} \le - \sqrt{1-\kappa^2}.
\end{equation}
Indeed, defining $\theta \in \R$ such that $\ip{\frac{y-x}{\norm{y-x}}}{\frac{\nabla f(x)}{\norm{\nabla f(x)}}} = \cos(\theta)$, we have
\begin{equation*}
\norm{y-x}^2 + 2 \alpha \norm{\nabla f(x)} \norm{y-x} \cos(\theta) + \alpha^2 \norm{\nabla f(x)}^2
\le \alpha^2 \kappa^2 \norm{\nabla f(x)}^2.
\end{equation*}
This already shows that $\cos(\theta) < 0$. In addition, minimizing the left-hand side over all possible values of $\norm{y-x}$, we get
\begin{equation*}
- \alpha^2 \norm{\nabla f(x)}^2 \cos^2(\theta) + \alpha^2 \norm{\nabla f(x)}^2
\le \alpha^2 \kappa^2 \norm{\nabla f(x)}^2,
\end{equation*}
hence $\cos^2(\theta) \ge 1 - \kappa^2$, which establishes \eqref{eq:ip_xy_nabla}.
\end{proof}

\begin{proposition}
\label{prop:AcceptedStep}
Let $\ushort{\alpha} \in (0, \infty)$ and $\beta, c \in (0, 1)$.
Assume that $f$ satisfies~\ref{it:continuously_diff}. Let $\ushort{x} \in C$ be non-B-stationary for~\eqref{eq:MinDiffFunctionClosedSet}.
There exist $\alpha_{\ushort{x}} \in (0, \ushort{\alpha}]$ and $\rho \in (0, \infty)$ such that, for all $x \in \ball(\ushort{x}, \rho) \cap C$, $\alpha \in [\alpha_{\ushort{x}}, \alpha_{\ushort{x}}/\beta]$, and $y \in \proj{C}{x-\alpha\nabla f(x)}$,
\begin{equation*}
f(y) < f(x) + c \ip{\nabla f(x)}{y-x}.
\end{equation*}
\end{proposition}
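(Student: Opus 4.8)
The plan is to derive the Armijo inequality directly from the two estimates furnished by Proposition~\ref{prop:ArbitrarySmallAlpha}. Since $\ushort{x}$ is non-B-stationary, $-\nabla f(\ushort{x}) \notin \regnorcone{C}{\ushort{x}}$, so by definition of the regular normal cone there is $w \in \tancone{C}{\ushort{x}}$ with $\ip{w}{-\nabla f(\ushort{x})} > 0$; moreover $\nabla f(\ushort{x}) \neq 0$, since $0 \in \regnorcone{C}{\ushort{x}}$. I fix such a $w$ and let $\kappa \in (0,1)$ be as in Proposition~\ref{prop:ArbitrarySmallAlpha}, to be applied with a small $\varepsilon$ chosen later. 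Writing $R(x,y) \coloneq f(y)-f(x)-\ip{\nabla f(x)}{y-x}$, the Armijo gap decomposes as
\begin{equation*}
f(y)-f(x)-c\ip{\nabla f(x)}{y-x} = R(x,y) + (1-c)\ip{\nabla f(x)}{y-x},
\end{equation*}
and I would show that the negative drift term dominates the remainder $R(x,y)$ once $\alpha_{\ushort{x}}$ and $\rho$ are small enough.

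The drift is handled by the second conclusion of Proposition~\ref{prop:ArbitrarySmallAlpha}, which gives $\ip{\nabla f(x)}{y-x} \le -\sqrt{1-\kappa^2}\,\norm{\nabla f(x)}\,\norm{y-x}$; shrinking the radius so that $\norm{\nabla f(x)} \ge \tfrac12\norm{\nabla f(\ushort{x})}$ on the ball (possible by continuity of $\nabla f$), this is a strictly negative multiple of $\norm{y-x}$. From the distance conclusion $\dist(x-\alpha\nabla f(x),C)\le\kappa\alpha\norm{\nabla f(x)}$ and $y\in\proj{C}{x-\alpha\nabla f(x)}$, the reverse triangle inequality yields the complementary lower bound
\begin{equation*}
\norm{y-x} \ge \alpha\norm{\nabla f(x)} - \kappa\alpha\norm{\nabla f(x)} = (1-\kappa)\alpha\norm{\nabla f(x)} \ge (1-\kappa)\alpha_{\ushort{x}}\norm{\nabla f(x)} > 0.
\end{equation*}
This lower bound is the device that makes a fixed radius $\rho$ negligible relative to $\norm{y-x}$.

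The heart of the argument, and the main obstacle, is bounding $R(x,y)$: hypothesis~\ref{it:continuously_diff} gives differentiability with continuous gradient only on $C$, which does \emph{not} provide a uniform first-order Taylor estimate near $\ushort{x}$ (the segment $[x,y]$ may leave $C$, so no mean-value or integral argument is available). I would circumvent this by anchoring at $\ushort{x}$. Setting $\phi(z) \coloneq f(z)-f(\ushort{x})-\ip{\nabla f(\ushort{x})}{z-\ushort{x}}$, a direct computation gives
\begin{equation*}
R(x,y) = \phi(y) - \phi(x) + \ip{\nabla f(\ushort{x})-\nabla f(x)}{y-x},
\end{equation*}
and differentiability \emph{at the single point} $\ushort{x}$ yields, for any prescribed $\eta>0$, a radius within which $|\phi(z)| \le \eta\norm{z-\ushort{x}}$ for $z\in C$. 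Bounding $|\phi(x)|\le\eta\rho$, $|\phi(y)|\le\eta(\norm{y-x}+\rho)$, and the inner product by $\omega(\rho)\norm{y-x}$ with $\omega(\rho)\coloneq\sup_{z\in\ball[\ushort{x},\rho]\cap C}\norm{\nabla f(\ushort{x})-\nabla f(z)}\to0$, and then using the lower bound to absorb the additive $\rho$-terms into multiples of $\norm{y-x}$, gives
\begin{equation*}
|R(x,y)| \le \Bigl(\eta + \omega(\rho) + \tfrac{4\eta\rho}{(1-\kappa)\alpha_{\ushort{x}}\norm{\nabla f(\ushort{x})}}\Bigr)\norm{y-x}.
\end{equation*}

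To finish, I fix the quantifiers in order. The target is to make the coefficient of $\norm{y-x}$ above at most $\tfrac12(1-c)\sqrt{1-\kappa^2}\norm{\nabla f(x)}$, since combining with the drift bound then yields
\begin{equation*}
f(y)-f(x)-c\ip{\nabla f(x)}{y-x} \le -\tfrac12(1-c)\sqrt{1-\kappa^2}\,\norm{\nabla f(x)}\,\norm{y-x} < 0,
\end{equation*}
the strictness coming from $\norm{y-x}>0$. Because $\norm{\nabla f(x)}\ge\tfrac12\norm{\nabla f(\ushort{x})}$ on the ball, it suffices to control the coefficient against the fixed positive constant $K\coloneq\tfrac14(1-c)\sqrt{1-\kappa^2}\norm{\nabla f(\ushort{x})}$. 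Accordingly I first pick $\eta\coloneq K/3$, obtaining the point-differentiability radius $\delta_1$; then I invoke Proposition~\ref{prop:ArbitrarySmallAlpha} with $\varepsilon\coloneq\min\{\ushort{\alpha},\,\beta\delta_1/(4M)\}$, where $M$ bounds $\norm{\nabla f}$ on a fixed closed ball about $\ushort{x}$, so that $\norm{y-x}\le 2\alpha\norm{\nabla f(x)}\le\tfrac{2\alpha_{\ushort{x}}}{\beta}M\le\delta_1/2$ by \eqref{eq:ProjectedTranslationDistance} and both $x$ and $y$ stay within $\delta_1$ of $\ushort{x}$. With $\alpha_{\ushort{x}}$ now fixed, the third term and $\omega(\rho)$ both tend to $0$ as $\rho\to0$, so shrinking $\rho$ below $\oshort{\rho}(\alpha_{\ushort{x}})$ (which only strengthens the conclusions of Proposition~\ref{prop:ArbitrarySmallAlpha}) drives the coefficient below $K$, completing the proof.
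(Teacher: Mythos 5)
Your proposal is correct and takes essentially the same approach as the paper's proof: both rely on the two conclusions of Proposition~\ref{prop:ArbitrarySmallAlpha}, anchor the first-order remainders at $\ushort{x}$ (your $\phi(y)-\phi(x)$ is exactly the paper's two correction terms), use the lower bound $\norm{y-x}\ge(1-\kappa)\alpha\norm{\nabla f(x)}$ to absorb the terms proportional to $\rho$, and pass from $\nabla f(\ushort{x})$ to $\nabla f(x)$ via continuity of the gradient before applying the drift estimate. The differences are only cosmetic: the paper fixes all constants explicitly up front, whereas you defer to a final limiting shrinkage of $\rho$, with slightly different numerical factors.
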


\begin{proof}
Define $\kappa$ as in Proposition~\ref{prop:ArbitrarySmallAlpha}. Let $\delta \in (0, \infty)$ be small enough to ensure
\begin{subequations}
\begin{align}
\label{eq:def_alpha_x_small1}
\sup_{y \in \ball\left[\ushort{x}, \frac{7\delta}{2\beta}\norm{\nabla f(\ushort{x})}\right] \cap C \setminus \{\ushort{x}\}}
\frac{|f(y)-f(\ushort{x}) - \ip{\nabla f(\ushort{x})}{y-\ushort{x}}|}{\norm{y-\ushort{x}}}
&< \frac{(1-c)\sqrt{1-\kappa^2}\norm{\nabla f(\ushort{x})}}{4\left(1 + \frac{8}{3(1-\kappa)}\right)},\\
\label{eq:def_alpha_x_small2}
\sup_{y \in \ball\left[\ushort{x}, \frac{7\delta}{2\beta}\norm{\nabla f(\ushort{x})}\right] \cap C}
\norm{\nabla f(y)-\nabla f(\ushort{x})}
&< \frac{(1-c)\sqrt{1-\kappa^2}}{4} \norm{\nabla f(\ushort{x})}.
\end{align}
\end{subequations}
These inequalities are satisfied by all $\delta$ small enough, from the definition of the gradient for the first one, and because the gradient is continuous at $\ushort{x}$ for the second one.

Then, define $\varepsilon \coloneq \min\left\{\ushort{\alpha}, \delta\right\}$ and let $\alpha_{\ushort{x}} \in (0, \varepsilon]$ and $\oshort{\rho}(\alpha_{\ushort{x}})\in(0,\infty)$ be given by Proposition~\ref{prop:ArbitrarySmallAlpha}.
Define
\begin{equation*}
\rho \coloneq \min\left\{\oshort{\rho}(\alpha_{\ushort{x}}), \alpha_{\ushort{x}} \norm{\nabla f(\ushort{x})}\right\}.
\end{equation*}
Note that, for all $x \in \ball(\ushort{x}, \rho) \cap C$,
\begin{equation*}
\norm{x-\ushort{x}}
< \rho \le \alpha_{\ushort{x}} \norm{\nabla f(\ushort{x})}
< \frac{7\alpha_{\ushort{x}}}{2\beta} \norm{\nabla f(\ushort{x})}
\leq \frac{7\delta }{2\beta} \norm{\nabla f(\ushort{x})},
\end{equation*}
so that from~\eqref{eq:def_alpha_x_small2}, $\norm{\nabla f(x)-\nabla f(\ushort{x})} < \frac{\norm{\nabla f(\ushort{x})}}{4}$, which implies
\begin{align}
\nonumber
\frac{3}{4} \norm{\nabla f(\ushort{x})}
& < \norm{\nabla f(\ushort{x})}-\norm{\nabla f(x)-\nabla f(\ushort{x})} \\
& \le \norm{\nabla f(x)} \nonumber\\
& \le \norm{\nabla f(\ushort{x})}+\norm{\nabla f(x)-\nabla f(\ushort{x})} \nonumber\\
& < \frac{5}{4} \norm{\nabla f(\ushort{x})}.
\label{eq:nablas_ushortx_x}
\end{align}
For all $x \in \ball(\ushort{x}, \rho) \cap C$, $\alpha \in [\alpha_{\ushort{x}}, \alpha_{\ushort{x}}/\beta]$, and $y \in \proj{C}{x-\alpha\nabla f(x)}$,
\begin{align}
f(y)
&= f(x) + \ip{\nabla f(\ushort{x})}{y-x} \nonumber\\
&\qquad + \left(f(\ushort{x}) - f(x) - \ip{\nabla f(\ushort{x})}{\ushort{x}-x}\right) \nonumber\\
&\qquad + \left(f(y) - f(\ushort{x}) - \ip{\nabla f(\ushort{x})}{y-\ushort{x}}\right) \nonumber\\
&\le f(x) + \ip{\nabla f(\ushort{x})}{y-x} +
\frac{(1-c)\sqrt{1-\kappa^2}\norm{\nabla f(\ushort{x})}}{4\left(1 + \frac{8}{3(1-\kappa)}\right)} \left(\norm{\ushort{x}-x} + \norm{y-\ushort{x}}\right),
\label{eq:UpperBoundfDiffGrad}
\end{align}
where the inequality follows from \eqref{eq:def_alpha_x_small1}. Observe that
\begin{align*}
\norm{y-\ushort{x}}
&\le \norm{y-x} + \norm{x-\ushort{x}} \\
&\le 2 \alpha \norm{\nabla f(x)} + \rho
\text{ from \eqref{eq:ProjectedTranslationDistance}}\\
&\le \frac{2 \alpha_{\ushort{x}}}{\beta} \norm{\nabla f(x)}
+ \alpha_{\ushort{x}} \norm{\nabla f(\ushort{x})} \\
&< \frac{5\alpha_{\ushort{x}}}{2\beta} \norm{\nabla f(\ushort{x})}
+ \alpha_{\ushort{x}} \norm{\nabla f(\ushort{x})} 
\text{ from \eqref{eq:nablas_ushortx_x}} \\
&\le \frac{7\alpha_{\ushort{x}}}{2\beta} \norm{\nabla f(\ushort{x})}.
\end{align*}
We continue from \eqref{eq:UpperBoundfDiffGrad}:
\begin{align*}
f(y)
& \le f(x) + \ip{\nabla f(\ushort{x})}{y-x} +
\frac{(1-c)\sqrt{1-\kappa^2}\norm{\nabla f(\ushort{x})}}{
4\left(1 + \frac{8}{3(1-\kappa)}\right)} \left(2\norm{\ushort{x}-x} + \norm{y-x}\right) \\
&\overset{\text{(a)}}{<} f(x) + \ip{\nabla f(\ushort{x})}{y-x}
+ \frac{(1-c)\sqrt{1-\kappa^2}\norm{\nabla f(\ushort{x})}}{4} \norm{y-x} \\
&\le f(x) + \ip{\nabla f(x)}{y-x} + \norm{\nabla f(\ushort{x})-\nabla f(x)} \norm{y-x} \\
&\qquad + \frac{(1-c)\sqrt{1-\kappa^2}\norm{\nabla f(\ushort{x})}}{4} \norm{y-x} \\
&\le f(x) + \ip{\nabla f(x)}{y-x} + \frac{(1-c)\sqrt{1-\kappa^2}\norm{\nabla f(\ushort{x})}}{2} \norm{y-x}
\text{ from~\eqref{eq:def_alpha_x_small2}}\\
&< f(x) + \ip{\nabla f(x)}{y-x} + (1-c)\sqrt{1-\kappa^2} \norm{\nabla f(x)}\norm{y-x}
\text{ from~\eqref{eq:nablas_ushortx_x}}\\
&\le f(x) + \ip{\nabla f(x)}{y-x} - (1-c) \ip{\nabla f(x)}{y-x}
\text{ from Proposition~\ref{prop:ArbitrarySmallAlpha}}\\
&= f(x) + c \ip{\nabla f(x)}{y-x}.
\end{align*}
Inequality (a) is true because
\begin{align*}
\norm{y-x}
&\ge \alpha \norm{\nabla f(x)} - \norm{x-\alpha \nabla f(x)-y}
\text{ by the triangle inequality}\\
&= \alpha \norm{\nabla f(x)} - \dist(x-\alpha\nabla f(x), C) \\
&\ge (1-\kappa) \alpha \norm{\nabla f(x)}
\text{ from Proposition~\ref{prop:ArbitrarySmallAlpha}}\\
&\ge \frac{3}{4} (1-\kappa) \rho
\text{ from~\eqref{eq:nablas_ushortx_x}, the definition of $\rho$, and $\alpha_{\ushort{x}} \le \alpha$}\\
&> \frac{3}{4} (1-\kappa) \norm{x-\ushort{x}}.
\end{align*}
\end{proof}

\subsection{Second part: convergence of successive iterates}
\label{subsec:converges_to_x}

\begin{proposition}
\label{prop:convergence_ik}
Let $(x_i)_{i \in \N}$ be a sequence generated by $\pgd$ (Algorithm~\ref{algo:PGD}), under~\ref{it:continuously_diff}, and $x$ be an accumulation point.
Then, for every subsequence $(x_{i_k})_{k\in\N}$ converging to $x$, the sequence $(x_{i_k+1})_{k\in\N}$ also converges to $x$.
\end{proposition}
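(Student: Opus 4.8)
The plan is to reduce the proposition to the single claim that $\norm{x_{i_k+1}-x_{i_k}} \to 0$ as $k \to \infty$; since $x_{i_k} \to x$, the triangle inequality then gives $x_{i_k+1} \to x$. The basic tool is the decay estimate~\eqref{eq:KanzowMehlitzDecay}: if $\alpha_i \in (0, \oshort{\alpha}]$ denotes the step size accepted at iteration $i$ (backtracking only shrinks the initial value, which lies in $[\ushort{\alpha}, \oshort{\alpha}]$), then
\begin{equation*}
\norm{x_{i+1}-x_i}^2 \le \frac{2\oshort{\alpha}}{c}\bigl(\mu_i - f(x_{i+1})\bigr)
\qquad \forall i \in \N,
\end{equation*}
so everything comes down to controlling $\mu_i - f(x_{i+1})$ along the subsequence. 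For the ``average'' rule this is immediate: from $\mu_i = (1-p)\mu_{i-1}+pf(x_i)$ one gets $\mu_{i-1}-f(x_i) = \frac{1}{p}(\mu_{i-1}-\mu_i)$, and combining this with the displayed estimate and summing telescopes to $\sum_i \norm{x_i-x_{i-1}}^2 < \infty$, using that $(\mu_i)_{i \in \N}$ is nonincreasing with finite limit $\varphi$ by Proposition~\ref{prop:NonmonotonePGDAverage} (the accumulation point $x$ exists). Hence $\norm{x_{i+1}-x_i}\to 0$ for the whole sequence, which is more than enough.

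The ``max'' rule is the crux, and the main obstacle is precisely that the Armijo condition only compares $f(x_{i+1})$ with the window maximum $\mu_i = f(x_{g(i)})$: a successor $x_{i+1}$ whose value drops far below $\mu_i$ is perfectly admissible, and for such an iterate the decay estimate gives \emph{no} control on $\norm{x_{i+1}-x_i}$. By Proposition~\ref{prop:NonmonotonePGD}, $(\mu_i)_{i \in \N}$ converges to a finite $\varphi \in [f(x), f(x_0)]$, and $f(x_{i_k+1}) \le \mu_{i_k}$ forces $\limsup_k f(x_{i_k+1}) \le \varphi$; the difficulty is to prove the matching lower bound $\liminf_k f(x_{i_k+1}) \ge \varphi$, i.e.\ that the successor value does not fall below the running maximum in the limit.

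To resolve this I would exploit the argmax index together with \emph{local} uniform continuity. First, using $\norm{x_{i+1}-x_i}\le 2\oshort{\alpha}\norm{\nabla f(x_i)}$ from~\eqref{eq:ProjectedTranslationDistance}, the continuity of $\nabla f$, and a finite induction over $t = 0, \dots, l+1$ on nested balls about $x$ (each forward step is bounded by $2\oshort{\alpha}$ times the maximum of $\norm{\nabla f}$ on the previous ball), one checks that the finitely many forward iterates $x_{i_k}, x_{i_k+1}, \dots, x_{i_k+l+1}$ all lie, for $k$ large, in a fixed compact set $K \coloneq \ball[x, R] \cap C$; since $f$ is continuous on $C$, it is uniformly continuous on $K$. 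Next, set $r_k \coloneq g(i_k+l+1) \in \{i_k+1, \dots, i_k+l+1\}$, so that $f(x_{r_k}) = \mu_{i_k+l+1} \to \varphi$ while $\rho_k \coloneq r_k - i_k$ takes values in the finite set $\{1, \dots, l+1\}$; passing to a subsequence I may assume $\rho_k$ is constant. The point is that the decay estimate is now applied at the \emph{high-value} index $r_k$: since $\mu_{r_k-1} \to \varphi$ and $f(x_{r_k}) \to \varphi$, it yields $\norm{x_{r_k}-x_{r_k-1}}\to 0$, whereupon uniform continuity on $K$ transfers the limit, giving $f(x_{r_k-1}) \to \varphi$.

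Iterating this alternation of decay and uniform continuity backward over the bounded range $r_k, r_k-1, \dots, i_k$ forces every intermediate step to vanish; in particular $\norm{x_{i_k+1}-x_{i_k}}\to 0$ along the chosen subsequence (and, as a byproduct, $f(x_{i_k})\to\varphi$, so that in fact $\varphi = f(x)$). A routine subsequence-of-subsequence argument then upgrades this to $\norm{x_{i_k+1}-x_{i_k}}\to 0$ along the original $(i_k)_{k \in \N}$, completing the proof. The delicate ingredient is thus the step invoking uniform continuity: it is available only because the forward window stays in a fixed compact subset of $C$, which is exactly why the preliminary boundedness induction must come first.
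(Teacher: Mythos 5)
Your proof is correct, and its overall architecture coincides with the paper's: reduce everything to $\norm{x_{i_k+1}-x_{i_k}} \to 0$, dispatch the ``average'' rule via the decay estimate~\eqref{eq:KanzowMehlitzDecay} and Proposition~\ref{prop:NonmonotonePGDAverage} (your telescoping sum versus the paper's squeeze argument is an immaterial difference), and, for the ``max'' rule, first confine the forward window $x_{i_k}, \dots, x_{i_k+l+1}$ to a fixed compact set and then propagate the limit $\varphi$ of the window maxima backward through the window using the decay estimate. Where you genuinely diverge from the paper is in how that backward propagation is organized. The paper passes to a subsequence along which every forward slot converges, $x_{i_{j_k}+s} \to x^s$ for $s = 0, \dots, l+1$ with $x^0 = x$, identifies the \emph{smallest} $s_1$ with $f(x^{s_1}) = \varphi$, and applies the decay estimate once, in the limit, to get $x^{s_1} = x^{s_1-1}$; minimality then forces $s_1 = 1$, hence $x^1 = x$. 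You instead leave the iterates unextracted, fix the argmax offset $g(i_k+l+1)-i_k$ constant along a subsequence, and run an explicit backward induction from the argmax index down to $i_k+1$, alternating the decay estimate (step norm $\to 0$) with uniform continuity of $f$ on the compact window (value $\to \varphi$ at the preceding index). The two mechanisms are essentially equivalent -- the paper's minimal-index contradiction is a one-step, contradiction-flavored version of your induction -- but the trade-off is real: you avoid extracting $l+2$ convergent subsequences at the cost of invoking uniform continuity on the compact containment set, whereas the paper needs only continuity of $f$ at the limit points $x^s$ (automatic under~\ref{it:continuously_diff}, since differentiability on $C$ implies continuity there) and settles the question in a single propagation step. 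Both arguments rest on the same two ingredients, namely statement 3 of Proposition~\ref{prop:NonmonotonePGD} and the decay estimate with the uniform constant $c/(2\oshort{\alpha})$, and both conclude with the same subsequence-of-subsequence upgrade, so your proposal is a valid, slightly more computational variant of the paper's proof.
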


\begin{proof}
Let $(x_{i_k})_{k\in\N}$ be a subsequence converging to $x$. We show that $(x_{i_k+1})_{k\in\N}$ also converges to $x$.

If the nonmonotonicity rule is set to ``average'', this is a direct consequence of Proposition~\ref{prop:NonmonotonePGDAverage}. Indeed, for all $i \in \N$, from~\eqref{eq:KanzowMehlitzDecay},
\begin{equation*}
f(x_{i+1}) \le \mu_i - \frac{c}{2 \oshort{\alpha}} \norm{x_{i+1}-x_i}^2 \le \mu_i.    
\end{equation*}
From Proposition~\ref{prop:NonmonotonePGDAverage}, $(f(x_{i+1}))_{i\in\N}$ and $(\mu_i)_{i\in\N}$ converge to the same limit. Therefore,
\begin{equation*}
 \left(\mu_i - \frac{c}{2 \oshort{\alpha}} \norm{x_{i+1}-x_i}^2\right)_{i\in\N}
\end{equation*}
also converges to this limit.
This implies that $(\norm{x_{i+1}-x_i})_{i\in\N}$ converges to $0$, hence $(\norm{x_{i_k+1}-x_{i_k}})_{k\in\N}$ converges to $0$, and $(x_{i_k+1})_{k\in\N}$ converges to the same limit as $(x_{i_k})_{k\in\N}$, that is, $x$.

Now, let us consider the ``max'' rule case. It suffices to show that $x$ is an accumulation point of every subsequence of $(x_{i_k+1})_{k\in\N}$. Thus, we show the following: for every subsequence $(i_{j_k})_{k\in\N}$ of $(i_k)_{k\in\N}$, there exists a subsequence of $(x_{i_{j_k}+1})_{k\in\N}$ that converges to $x$.
Let $(i_{j_k})_{k\in\N}$ be a subsequence of $(i_k)_{k\in\N}$.
For all $i \in \N$, let $g(i) \in \argmax_{j \in \{\max\{0, i-l\}, \dots, i\}} f(x_j)$, as in Proposition~\ref{prop:NonmonotonePGD}. By the third statement of Proposition~\ref{prop:NonmonotonePGD}, the sequence $(f(x_{g(i)}))_{i\in\N}$ converges to $\varphi \in [f(x), f(x_0)]$.
For every $k \in \N$, letting $\alpha_{i_{j_k}} \in (0, \oshort{\alpha}]$ be the number such that $x_{i_{j_k}+1} \in \proj{C}{x_{i_{j_k}} - \alpha_{i_{j_k}} \nabla f(x_{i_{j_k}})}$, by \eqref{eq:ProjectedTranslationDistance},
\begin{equation*}
\norm{x_{i_{j_k}+1}-x_{i_{j_k}}}
\le 2 \alpha_{i_{j_k}} \norm{\nabla f(x_{i_{j_k}})}
\le 2 \oshort{\alpha} \norm{\nabla f(x_{i_{j_k}})}.
\end{equation*}
Thus, since $(x_{i_{j_k}})_{k\in\N}$ is bounded and $\nabla f$ is locally bounded (as it is continuous), the sequence $(x_{i_{j_k}+1})_{k\in\N}$ is bounded. If we replace $(i_{j_k})_{k\in\N}$ with a subsequence, we can assume that $(x_{i_{j_k}+1})_{k\in\N}$ converges.

Iterating the reasoning, we can assume that $(x_{i_{j_k}+s})_{k\in\N}$ converges to some $x^{s}\in C$ for every $s \in \{0, \dots, l+1\}$. By definition of $x$, $x^{0}=x$.

Observe that, from the continuity of $f$,
\begin{align*}
f(x_{g(i_{j_k}+l+1)})
&= \max\{f(x_{i_{j_k}+1}), \dots, f(x_{i_{j_k}+l+1})\} \\
&\to \max\{f(x^{1}), \dots, f(x^{l+1})\}
\text{ when } k \to \infty.
\end{align*}
In particular, there exists $s_1 \in \{1, \dots, l+1\}$ such that
\begin{equation}
\label{eq:def_s_1}
f(x^{s_1}) = \varphi.
\end{equation}
Let $s_1$ be the smallest such integer.
For all $k \in \N$, from the condition in line~\ref{algo:PGDmap:NonmonotoneArmijoCondition} of Algorithm~\ref{algo:PGDmap} and~\eqref{eq:KanzowMehlitzDecay},
\begin{align*}
f(x_{i_{j_k}+s_1})
&\le f(x_{g(i_{j_k}+s_1-1)}) - \frac{c}{2\oshort{\alpha}} \norm{x_{i_{j_k}+s_1}-x_{i_{j_k}+s_1-1}}^2,
\end{align*}
and letting $k$ tend to infinity yields
\begin{equation*}
\varphi
= f(x^{s_1})
\le \varphi - \frac{c}{2\oshort{\alpha}} \norm{x^{s_1}-x^{s_1-1}}^2.
\end{equation*}
Consequently, $x^{s_1} = x^{s_1-1}$. In particular, $f(x^{s_1-1}) =f(x^{s_1}) = \varphi$. Therefore, $s_1=1$, otherwise it would not be the smallest integer satisfying \eqref{eq:def_s_1}. The equality $x^{s_1} = x^{s_1-1}$ then reduces to $x^{1}=x^{0}=x$ and, when $k \to \infty$,
\begin{equation*}
x_{i_{j_k}+1} \to x^{1} = x.
\end{equation*}
\end{proof}

\subsection{Third part: proof of Theorem~\ref{thm:PGDaccumulatesAtBStationaryPoints}}
\label{subsec:proof_accumulates}
Let $\ushort{x}$ be an accumulation point of $(x_i)_{i\in\N}$. Assume, for the sake of contradiction, that $\ushort{x}$ is not B-stationary for~\eqref{eq:MinDiffFunctionClosedSet}. Let $(x_{i_k})_{k\in\N}$ be a subsequence converging to $\ushort{x}$.

Let $\alpha_{\ushort{x}}$ and $\rho$ be as in Proposition~\ref{prop:AcceptedStep}. For all $k \in \N$ large enough, $x_{i_k} \! \in \ball(\ushort{x}, \rho) \cap C$. Thus, when Algorithm~\ref{algo:PGDmap} is called at point $x_{i_k}$, the condition in line~\ref{algo:PGDmap:NonmonotoneArmijoCondition} stops being fulfilled for some $\alpha_{i_k} \ge \alpha_{\ushort{x}}$, meaning that
\begin{equation*}
x_{i_k+1} \in \proj{C}{x_{i_k}-\alpha_{i_k} \nabla f(x_{i_k})}
\text{ for some } \alpha_{i_k} \in [\alpha_{\ushort{x}}, \oshort{\alpha}].
\end{equation*}
Replacing $(i_k)_{k\in\N}$ with a subsequence if necessary, we can assume that $(\alpha_{i_k})_{k\in\N}$ converges to some $\alpha_{\lim} \in [\alpha_{\ushort{x}}, \oshort{\alpha}]$.

For all $k \in \N$, we have
\begin{equation*}
\norm{x_{i_k} - \alpha_{i_k} \nabla f(x_{i_k}) - x_{i_k+1}}
= \dist(x_{i_k} - \alpha_{i_k} \nabla f(x_{i_k}), C),
\end{equation*}
and since the distance to a nonempty closed set is a continuous function, we can take this equality to the limit. We use the fact that $x_{i_k+1} \to \ushort{x}$ when $k \to \infty$, from Proposition~\ref{prop:convergence_ik}. This yields
\begin{equation*}
\norm{\alpha_{\lim} \nabla f(\ushort{x})}
= \dist(\ushort{x} - \alpha_{\lim}\nabla f(\ushort{x}), C),
\end{equation*}
which means that $\ushort{x} \in \proj{C}{\ushort{x} - \alpha_{\lim}\nabla f(\ushort{x})}$. In particular, $-\nabla f(\ushort{x}) \in \proxnorcone{C}{\ushort{x}} \subseteq \regnorcone{C}{\ushort{x}}$, which contradicts our assumption that $\ushort{x}$ is not B-stationary for~\eqref{eq:MinDiffFunctionClosedSet}. We have therefore proven that every accumulation point is B-stationary.

Finally, if $(x_i)_{i\in\N}$ has an isolated accumulation point, then the sequence $(x_i)_{i\in\N}$ converges, from Proposition~\ref{prop:convergence_ik} and \cite[Lemma~4.10]{MoreSorensen}.

\section{Convergence analysis for a locally Lipschitz continuous gradient}
\label{sec:ConvergenceAnalysisForLocallyLipschitzContinuousGradient}
In this section, $\pgd$ (Algorithm~\ref{algo:PGD}) is analyzed under hypothesis~\ref{it:Lipschitz_diff}. As mentioned after Remark~\ref{rem:FiniteOrInfinite}, only the nontrivial case where an infinite sequence is generated is considered here. Specifically, the second part of the second item of Theorem~\ref{thm:StationarityAccumulationPointsPGD}, restated in Theorem~\ref{thm:PGDaccumulatesAtPStationaryPoints} for convenience, is proven based on Proposition~\ref{prop:ArmijoConditionLipschitzContinuousGradient} and Corollary~\ref{coro:PGDmapTerminationLipschitzContinuousGradient}, which state that, for every $\ushort{x} \in C$ and every input $x$ sufficiently close to $\ushort{x}$, the $\pgd$ map (Algorithm~\ref{algo:PGDmap}) terminates after at most a given number of iterations that depends only on~$\ushort{x}$.

\begin{proposition}
\label{prop:ArmijoConditionLipschitzContinuousGradient}
Assume that $f$ satisfies~\ref{it:Lipschitz_diff}.
Let $\ushort{x} \in C$, $\oshort{\alpha} \in (0, \infty)$, $c \in (0, 1)$, and $\rho \in (0, \infty)$. Let $\oshort{\rho} \in \left[\rho + 2 \oshort{\alpha} \max_{x \in \ball[\ushort{x}, \rho] \cap C} \norm{\nabla f(x)}, \infty\right)$ and define $\alpha_* \coloneq (1-c)/\lip_{\ball[\ushort{x}, \oshort{\rho}]}(\nabla f)$. Then, for all $x \in \ball[\ushort{x}, \rho] \cap C$, $\alpha \in [0, \min\{\alpha_*, \oshort{\alpha}\}]$, and $y \in \proj{C}{x-\alpha\nabla f(x)}$,
\begin{equation*}
f(y) \le f(x) + c \ip{\nabla f(x)}{y-x}.
\end{equation*}
\end{proposition}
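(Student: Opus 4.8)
The plan is to fix $x \in \ball[\ushort{x}, \rho] \cap C$, $\alpha \in [0, \min\{\alpha_*, \oshort{\alpha}\}]$, and $y \in \proj{C}{x - \alpha\nabla f(x)}$, and to derive the Armijo inequality from the descent inequality \eqref{eq:InequalityLipschitzContinuousGradient}. The obstruction to applying \eqref{eq:InequalityLipschitzContinuousGradient} directly is that it requires both $x$ and $y$ to lie in a common ball on which the Lipschitz constant is measured; here that ball is $\ball[\ushort{x}, \oshort{\rho}]$, and the whole purpose of the specific lower bound imposed on $\oshort{\rho}$ is to guarantee that the projected point $y$ does not escape $\ball[\ushort{x}, \oshort{\rho}]$. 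So the first step is this localization.

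For the localization, I would apply Proposition~\ref{prop:ProjectedTranslation} with $v = \alpha\nabla f(x)$, which is legitimate since $x \in C$. Inequality \eqref{eq:ProjectedTranslationDistance} gives $\norm{y - x} \le 2\alpha\norm{\nabla f(x)}$. Combining with $\norm{x - \ushort{x}} \le \rho$ via the triangle inequality yields $\norm{y - \ushort{x}} \le \rho + 2\alpha\norm{\nabla f(x)}$. Since $\alpha \le \oshort{\alpha}$ and $\norm{\nabla f(x)} \le \max_{z \in \ball[\ushort{x}, \rho] \cap C}\norm{\nabla f(z)}$, the right-hand side is at most $\rho + 2\oshort{\alpha}\max_{z \in \ball[\ushort{x}, \rho] \cap C}\norm{\nabla f(z)} \le \oshort{\rho}$. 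Hence $y \in \ball[\ushort{x}, \oshort{\rho}]$, and of course $x \in \ball[\ushort{x}, \rho] \subseteq \ball[\ushort{x}, \oshort{\rho}]$ as well.

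With both points in $\ball[\ushort{x}, \oshort{\rho}]$, inequality \eqref{eq:InequalityLipschitzContinuousGradient} gives $f(y) \le f(x) + \ip{\nabla f(x)}{y - x} + \frac{L}{2}\norm{y - x}^2$, where $L \coloneq \lip_{\ball[\ushort{x}, \oshort{\rho}]}(\nabla f)$. The remaining task is to absorb the quadratic remainder into a fraction of the linear term. Here inequality \eqref{eq:ProjectedTranslationIP}, again with $v = \alpha\nabla f(x)$, is the key: it reads $2\alpha\ip{\nabla f(x)}{y - x} \le -\norm{y - x}^2$, which in particular forces $\ip{\nabla f(x)}{y - x} \le 0$ and supplies $\frac{L}{2}\norm{y - x}^2 \le -L\alpha\ip{\nabla f(x)}{y - x}$. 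Since $\alpha \le \alpha_* = (1 - c)/L$ gives $L\alpha \le 1 - c$ while $-\ip{\nabla f(x)}{y - x} \ge 0$, this remainder is bounded above by $-(1 - c)\ip{\nabla f(x)}{y - x}$. Substituting back produces exactly $f(y) \le f(x) + c\,\ip{\nabla f(x)}{y - x}$.

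I do not anticipate a serious obstacle: the argument is a standard descent-lemma computation, and the only subtlety is recognizing why $\oshort{\rho}$ must be at least $\rho + 2\oshort{\alpha}\max_{z \in \ball[\ushort{x}, \rho] \cap C}\norm{\nabla f(z)}$, namely to keep $y$ inside the ball on which the Lipschitz constant is taken. The degenerate cases $\alpha = 0$ (where $y = x$ and the conclusion holds with equality) and $L = 0$ (affine $f$ on the ball, with $\alpha_*$ read as $+\infty$) both satisfy the conclusion trivially, the bound $L\alpha \le 1 - c$ holding automatically when $L = 0$; I would mention these only in passing.
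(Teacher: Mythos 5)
Your proposal is correct and follows essentially the same route as the paper's own proof: localize $y$ in $\ball[\ushort{x}, \oshort{\rho}]$ via \eqref{eq:ProjectedTranslationDistance} and the triangle inequality, apply the descent inequality \eqref{eq:InequalityLipschitzContinuousGradient} on that ball, and absorb the quadratic term into the linear one using \eqref{eq:ProjectedTranslationIP} together with $\alpha \le \alpha_*$. The only (harmless) additions are your explicit remarks on the degenerate cases $\alpha = 0$ and $\lip_{\ball[\ushort{x}, \oshort{\rho}]}(\nabla f) = 0$, which the paper leaves implicit.
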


\begin{proof}
For all $x \in \ball[\ushort{x}, \rho] \cap C$ and $\alpha \in [0, \oshort{\alpha}]$, $\proj{C}{x-\alpha\nabla f(x)} \subseteq \ball[\ushort{x}, \oshort{\rho}]$; indeed, for all $y \in \proj{C}{x-\alpha\nabla f(x)}$,
\begin{equation*}
\norm{y-\ushort{x}}
\le \norm{y-x} + \norm{x-\ushort{x}}
\le 2 \alpha \norm{\nabla f(x)} + \rho
\le \oshort{\rho},
\end{equation*}
where the second inequality follows from~\eqref{eq:ProjectedTranslationDistance}. Thus, by~\eqref{eq:InequalityLipschitzContinuousGradient} and~\eqref{eq:ProjectedTranslationIP}, for all $x \in \ball[\ushort{x}, \rho] \cap C$, $\alpha \in [0, \min\{\alpha_*, \oshort{\alpha}\}]$, and $y \in \proj{C}{x-\alpha\nabla f(x)}$,
\begin{align*}
f(y)
&\le f(x) + \ip{\nabla f(x)}{y-x} + \frac{1}{2} \lip_{\ball[\ushort{x}, \oshort{\rho}]}(\nabla f) \norm{y-x}^2\\
&\le f(x) + \left(1-\alpha\lip_{\ball[\ushort{x}, \oshort{\rho}]}(\nabla f)\right) \ip{\nabla f(x)}{y-x}\\
&\le f(x) + c \ip{\nabla f(x)}{y-x}.
\end{align*}
\end{proof}

\begin{corollary}
\label{coro:PGDmapTerminationLipschitzContinuousGradient}
Consider Algorithm~\ref{algo:PGDmap} under hypothesis~\ref{it:Lipschitz_diff}. Given $\ushort{x} \in C$ and $\rho \in (0, \infty)$, let $\oshort{\rho}$ and $\alpha_*$ be as in Proposition~\ref{prop:ArmijoConditionLipschitzContinuousGradient}.
Then, for every $x \in \ball[\ushort{x}, \rho] \cap C$, the while loop terminates with a step size $\alpha \in [\min\{\ushort{\alpha}, \beta\alpha_*\}, \oshort{\alpha}]$ and hence after at most $\max\{0, \lceil\ln(\alpha_*/\alpha_0)/\ln(\beta)\rceil\}$ iterations, where $\alpha_0$ is the step size chosen in line~\ref{algo:PGDmap:InitialStepSize}.
\end{corollary}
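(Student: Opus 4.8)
The plan is to read the while-loop exit test in Algorithm~\ref{algo:PGDmap} directly through Proposition~\ref{prop:ArmijoConditionLipschitzContinuousGradient}. The first thing I would observe is that, since the input satisfies $\mu \ge f(x)$, the Armijo inequality $f(y) \le \mu + c\ip{\nabla f(x)}{y-x}$ that terminates the loop is \emph{weaker} than the inequality $f(y) \le f(x) + c\ip{\nabla f(x)}{y-x}$ supplied by Proposition~\ref{prop:ArmijoConditionLipschitzContinuousGradient}. Hence, for $x \in \ball[\ushort{x}, \rho] \cap C$, the exit test is guaranteed to be satisfied as soon as the current step size $\alpha$ satisfies $\alpha \le \min\{\alpha_*, \oshort{\alpha}\}$, and the only question is how many halving steps $\alpha \gets \alpha\beta$ are needed to reach that regime and what the resulting value of $\alpha$ is.

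For the iteration count, I would track the step size explicitly: after $k$ executions of the loop body the step size equals $\alpha_0 \beta^k$, where $\alpha_0 \in [\ushort{\alpha}, \oshort{\alpha}]$ is the value chosen in line~\ref{algo:PGDmap:InitialStepSize}. Because $\alpha_0 \le \oshort{\alpha}$ and $\beta \in (0,1)$, every step size encountered is already at most $\oshort{\alpha}$, so the binding threshold from the previous paragraph reduces to $\alpha_0 \beta^k \le \alpha_*$. Solving this geometric inequality, recalling that $\ln\beta < 0$, yields $k \ge \ln(\alpha_*/\alpha_0)/\ln\beta$, so the loop must have exited after at most $\max\{0, \lceil \ln(\alpha_*/\alpha_0)/\ln\beta\rceil\}$ iterations; the outer $\max$ with $0$ simply covers the case $\alpha_0 \le \alpha_*$, in which the very first candidate $y$ already passes the test.

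For the two-sided bound on the terminal step size $\alpha$, the upper bound is immediate: $\alpha$ never exceeds its initial value, so $\alpha \le \alpha_0 \le \oshort{\alpha}$. The lower bound I would establish by a short case analysis. If the loop body is never executed, then $\alpha = \alpha_0 \ge \ushort{\alpha} \ge \min\{\ushort{\alpha}, \beta\alpha_*\}$. Otherwise the loop ran at least once, so the \emph{previous} step size $\alpha/\beta$ failed the exit test at line~\ref{algo:PGDmap:NonmonotoneArmijoCondition}; since $\alpha/\beta$ is itself one of the earlier step sizes it satisfies $\alpha/\beta \le \alpha_0 \le \oshort{\alpha}$, and the contrapositive of Proposition~\ref{prop:ArmijoConditionLipschitzContinuousGradient} then forces $\alpha/\beta > \alpha_*$, i.e.\ $\alpha > \beta\alpha_* \ge \min\{\ushort{\alpha}, \beta\alpha_*\}$. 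Combining the cases gives $\alpha \in [\min\{\ushort{\alpha}, \beta\alpha_*\}, \oshort{\alpha}]$.

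The only delicate point is this lower-bound argument: one must apply the contrapositive of Proposition~\ref{prop:ArmijoConditionLipschitzContinuousGradient} at the step size $\alpha/\beta$ rather than at $\alpha$, and verify that this earlier step size still lies in $[0, \oshort{\alpha}]$ so that the failure of the Armijo test can only be attributed to $\alpha/\beta$ exceeding $\alpha_*$ (and not to exceeding $\oshort{\alpha}$). Everything else is bookkeeping on the geometric decay of $\alpha$.
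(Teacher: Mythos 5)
Your proof is correct and takes essentially the same route as the paper's: both arguments rest on Proposition~\ref{prop:ArmijoConditionLipschitzContinuousGradient} forcing the Armijo test to pass once the geometrically decaying step size drops to $\min\{\alpha_*,\oshort{\alpha}\}$, so the loop can at worst exit at the first $\alpha = \alpha_0\beta^i$ with $\alpha/\beta > \alpha_*$, which yields the iteration bound. The paper's proof only spells out the iteration count, leaving the $\mu \ge f(x)$ weakening and the step-size interval $[\min\{\ushort{\alpha},\beta\alpha_*\},\oshort{\alpha}]$ implicit, whereas you make these details (correctly) explicit.
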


\begin{proof}
At the latest, the while loop ends after iteration $i \in \N \setminus \{0\}$ with $\alpha = \alpha_0\beta^i$ such that $\alpha/\beta > \alpha_*$. In that case, $i < 1+\ln(\alpha_*/\alpha_0)/\ln(\beta)$ and thus $i \le \lceil\ln(\alpha_*/\alpha_0)/\ln(\beta)\rceil$.
\end{proof}

\begin{theorem}
\label{thm:PGDaccumulatesAtPStationaryPoints}
Assume that $f$ satisfies~\ref{it:Lipschitz_diff}. Let $(x_i)_{i \in \N}$ be a sequence generated by $\pgd$ (Algorithm~\ref{algo:PGD}). Then, all accumulation points of $(x_i)_{i \in \N}$ are P-stationary for~\eqref{eq:MinDiffFunctionClosedSet}. Moreover, for every convergent subsequence $(x_{i_j})_{j \in \N}$,
\begin{equation}
\label{eq:PGDsubsequencePstationarityMeasure}
\lim_{j \to \infty} \dist(-\nabla f(x_{i_j+1}), \proxnorcone{C}{x_{i_j+1}}) = 0.
\end{equation}
\end{theorem}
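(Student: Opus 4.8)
The plan is to lean on two facts that are already in place under hypothesis~\ref{it:Lipschitz_diff}. First, since \ref{it:Lipschitz_diff} implies \ref{it:continuously_diff}, Proposition~\ref{prop:convergence_ik} applies verbatim: along any subsequence $x_{i_j} \to \ushort{x}$ one also has $x_{i_j+1} \to \ushort{x}$. Second, Corollary~\ref{coro:PGDmapTerminationLipschitzContinuousGradient} provides a uniform positive lower bound on the accepted step size throughout a fixed neighborhood of $\ushort{x}$. This second fact is the decisive improvement over the merely continuously differentiable case: it holds at \emph{every} point $\ushort{x} \in C$, whereas under \ref{it:continuously_diff} the analogous lower bound (Proposition~\ref{prop:AcceptedStep}) is only available at non-B-stationary points. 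It is precisely this uniformity at arbitrary points that lets us reach the stronger conclusion of P-stationarity directly, rather than by contradiction.

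First I would fix an accumulation point $\ushort{x}$ and a subsequence $x_{i_j} \to \ushort{x}$, recording that $x_{i_j+1} \in \proj{C}{x_{i_j} - \alpha_{i_j}\nabla f(x_{i_j})}$ for the accepted step sizes $\alpha_{i_j}$. Fixing some $\rho \in (0, \infty)$ and applying Corollary~\ref{coro:PGDmapTerminationLipschitzContinuousGradient}, once $x_{i_j} \in \ball[\ushort{x}, \rho]$ one has $\alpha_{i_j} \in [\alpha_{\min}, \oshort{\alpha}]$ with $\alpha_{\min} \coloneq \min\{\ushort{\alpha}, \beta\alpha_*\} > 0$, where $\alpha_*$ is the quantity from Proposition~\ref{prop:ArmijoConditionLipschitzContinuousGradient}. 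Both claims of the theorem will follow from this lower bound together with $x_{i_j+1} \to \ushort{x}$.

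For P-stationarity, I would pass to a further subsequence along which $\alpha_{i_j} \to \alpha_{\lim} \in [\alpha_{\min}, \oshort{\alpha}]$ and take the limit in the identity $\norm{x_{i_j} - \alpha_{i_j}\nabla f(x_{i_j}) - x_{i_j+1}} = \dist(x_{i_j} - \alpha_{i_j}\nabla f(x_{i_j}), C)$, using continuity of $\nabla f$ and of $\dist(\cdot, C)$ and the convergences $x_{i_j}, x_{i_j+1} \to \ushort{x}$. This yields $\ushort{x} \in \proj{C}{\ushort{x} - \alpha_{\lim}\nabla f(\ushort{x})}$, that is, $-\nabla f(\ushort{x}) \in \proxnorcone{C}{\ushort{x}}$, exactly as in section~\ref{subsec:proof_accumulates} but now with no non-B-stationarity hypothesis invoked. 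As every accumulation point arises as such a limit, all accumulation points are P-stationary.

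For the stationarity measure~\eqref{eq:PGDsubsequencePstationarityMeasure}, the key observation is that $x_{i_j+1} \in \proj{C}{x_{i_j} - \alpha_{i_j}\nabla f(x_{i_j})}$ forces $x_{i_j} - \alpha_{i_j}\nabla f(x_{i_j}) - x_{i_j+1} \in \proxnorcone{C}{x_{i_j+1}}$ directly from the definition of a proximal normal (taking $\oshort{\alpha} = 1$ there); scaling by the cone property gives the admissible vector $\frac{x_{i_j}-x_{i_j+1}}{\alpha_{i_j}} - \nabla f(x_{i_j}) \in \proxnorcone{C}{x_{i_j+1}}$. Bounding the distance by the norm of the difference with this vector yields
\[
\dist(-\nabla f(x_{i_j+1}), \proxnorcone{C}{x_{i_j+1}})
\le \norm{\nabla f(x_{i_j}) - \nabla f(x_{i_j+1})} + \frac{\norm{x_{i_j} - x_{i_j+1}}}{\alpha_{i_j}}.
\]
The first term vanishes in the limit by continuity of $\nabla f$ and $x_{i_j}, x_{i_j+1} \to \ushort{x}$; the second vanishes because $\norm{x_{i_j} - x_{i_j+1}} \to 0$ while $\alpha_{i_j} \ge \alpha_{\min} > 0$. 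I expect the only genuine obstacle to be this step-size lower bound: without it the quotient in the second term could blow up, so the entire argument rests on Corollary~\ref{coro:PGDmapTerminationLipschitzContinuousGradient}, whose uniform bound at arbitrary points is the essential payoff of Lipschitz continuity of the gradient.
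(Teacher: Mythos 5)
Your proposal is correct and follows essentially the same route as the paper's own proof: the step-size lower bound from Corollary~\ref{coro:PGDmapTerminationLipschitzContinuousGradient}, the convergence of successive iterates from Proposition~\ref{prop:convergence_ik}, passing to the limit in the projection identity to obtain $\ushort{x} \in \proj{C}{\ushort{x}-\alpha_{\lim}\nabla f(\ushort{x})}$, and bounding the stationarity measure via the proximal normal $\frac{1}{\alpha_{i_j}}(x_{i_j}-x_{i_j+1})-\nabla f(x_{i_j}) \in \proxnorcone{C}{x_{i_j+1}}$. Your observation that the uniform step-size bound at \emph{arbitrary} points is exactly what Lipschitz continuity of the gradient buys, and what makes the direct (non-contradiction) argument possible, matches the structure of the paper's argument precisely.
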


\begin{proof}
Assume that a subsequence $(x_{i_j})_{j \in \N}$ converges to $\ushort{x} \in C$. Given $\rho \in (0, \infty)$, let $\oshort{\rho}$ and $\alpha_*$ be as in Proposition~\ref{prop:ArmijoConditionLipschitzContinuousGradient}. Define
\begin{equation*}
I \coloneq \left[\min\left\{\ushort{\alpha}, \beta\alpha_*\right\}, \oshort{\alpha}\right].
\end{equation*}
There exists $j_* \in \N$ such that, for all integers $j \ge j_*$, $x_{i_j} \in \ball[\ushort{x}, \rho]$, thus, by Corollary~\ref{coro:PGDmapTerminationLipschitzContinuousGradient}, $x_{i_j+1} \in \proj{C}{x_{i_j}-\alpha_{i_j}\nabla f(x_{i_j})}$ with $\alpha_{i_j} \in I$, and hence
\begin{equation*}
\norm{x_{i_j+1}-(x_{i_j}-\alpha_{i_j}\nabla f(x_{i_j}))} = \dist(x_{i_j}-\alpha_{i_j}\nabla f(x_{i_j}), C).
\end{equation*}
Since $I$ is compact, a subsequence $(\alpha_{i_{j_k}})_{k \in \N}$ converges to $\alpha \in I$. Moreover, there exists $k_* \in \N$ such that $j_{k_*} \ge j_*$. Furthermore, by Proposition~\ref{prop:convergence_ik}, $(x_{i_j+1})_{j \in \N}$ converges to $\ushort{x}$. Therefore, for all integers $k \ge k_*$,
\begin{equation*}
\norm{x_{i_{j_k}+1}-(x_{i_{j_k}}-\alpha_{i_{j_k}}\nabla f(x_{i_{j_k}}))} = \dist(x_{i_{j_k}}-\alpha_{i_{j_k}}\nabla f(x_{i_{j_k}}), C),
\end{equation*}
and letting $k$ tend to infinity yields
\begin{equation*}
\norm{\ushort{x}-(\ushort{x}-\alpha\nabla f(\ushort{x}))} = \dist(\ushort{x}-\alpha\nabla f(\ushort{x}), C).
\end{equation*}
It follows that $\ushort{x} \in \proj{C}{\ushort{x}-\alpha\nabla f(\ushort{x})}$, which implies that $-\nabla f(\ushort{x}) \in \proxnorcone{C}{\ushort{x}}$.

We now establish \eqref{eq:PGDsubsequencePstationarityMeasure}. Recall that, for all integers $j \ge j_*$, $x_{i_j+1} \in \proj{C}{x_{i_j}-\alpha_{i_j}\nabla f(x_{i_j})}$ with $\alpha_{i_j} \in I$, hence $\frac{1}{\alpha_{i_j}}(x_{i_j}-x_{i_j+1})-\nabla f(x_{i_j}) \in \proxnorcone{C}{x_{i_j+1}}$, and thus
\begin{align*}
\dist(-\nabla f(x_{i_j+1}), \proxnorcone{C}{x_{i_j+1}})
&\le \norm{-\nabla f(x_{i_j+1})-(\frac{1}{\alpha_{i_j}}(x_{i_j}-x_{i_j+1})-\nabla f(x_{i_j}))}\\
&\le \frac{1}{\alpha_{i_j}} \norm{x_{i_j+1}-x_{i_j}} + \norm{\nabla f(x_{i_j+1})-\nabla f(x_{i_j})}\\
&\to 0 \text{ when } j \to \infty,
\end{align*}
since $(x_{i_j})_{j \in \N}$ and $(x_{i_j+1})_{j \in \N}$ converge to $\ushort{x}$ and $(\alpha_{i_j})_{j \in \N}$ is bounded away from zero.
\end{proof}

Proposition~\ref{prop:PGDboundedSequence} considers the case where $\pgd$ generates a bounded sequence.

\begin{proposition}
\label{prop:PGDboundedSequence}
Assume that $f$ satisfies~\ref{it:Lipschitz_diff}. Let $(x_i)_{i \in \N}$ be a sequence generated by $\pgd$ (Algorithm~\ref{algo:PGD}). If $(x_i)_{i \in \N}$ is bounded, which is the case if the sublevel set~\eqref{eq:InitialSublevelSet} is bounded, then all of its accumulation points, of which there exists at least one, are P-stationary for~\eqref{eq:MinDiffFunctionClosedSet} and have the same image by $f$, and
\begin{equation}
\label{eq:PGDboundedSequencePstationarityMeasure}
\lim_{i \to \infty} \dist(-\nabla f(x_i), \proxnorcone{C}{x_i}) = 0.
\end{equation}
\end{proposition}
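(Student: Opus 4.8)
The plan is to assemble the statement from results already established for convergent subsequences, the substantive work being to upgrade the subsequential limit \eqref{eq:PGDsubsequencePstationarityMeasure} of Theorem~\ref{thm:PGDaccumulatesAtPStationaryPoints} into the full-sequence limit \eqref{eq:PGDboundedSequencePstationarityMeasure}.

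First I would dispatch boundedness, existence, and P-stationarity. By the first item of Proposition~\ref{prop:NonmonotonePGDAverage} (average rule) or the second statement of Proposition~\ref{prop:NonmonotonePGD} (max rule), the sequence $(x_i)_{i\in\N}$ lies in the sublevel set~\eqref{eq:InitialSublevelSet}; hence if that set is bounded, so is $(x_i)_{i\in\N}$. A bounded sequence in the finite-dimensional space $\mathcal{E}$ has at least one accumulation point by the Bolzano--Weierstrass theorem, and that every accumulation point is P-stationary is the first assertion of Theorem~\ref{thm:PGDaccumulatesAtPStationaryPoints}.

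For the claim that all accumulation points share the same value of $f$, I would show that $(f(x_i))_{i\in\N}$ converges, since continuity of $f$ then forces every accumulation point to attain the limiting value. Under the average rule this is the second item of Proposition~\ref{prop:NonmonotonePGDAverage}. Under the max rule it follows from the fourth statement of Proposition~\ref{prop:NonmonotonePGD}: the sequence lies in a bounded set whose closure is compact, and $f$, being differentiable, is continuous, hence bounded from below and uniformly continuous on that compact closure, so the hypotheses of that statement are met.

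The crux is \eqref{eq:PGDboundedSequencePstationarityMeasure}, and the main obstacle is that Theorem~\ref{thm:PGDaccumulatesAtPStationaryPoints} controls the distance only at the successors $x_{i_j+1}$ of a convergent subsequence, not at an arbitrary index. To overcome this, I would invoke the standard criterion that a nonnegative sequence tends to $0$ if and only if every subsequence has a further subsequence tending to $0$. So I fix an arbitrary subsequence $(x_{m_k})_{k\in\N}$; for $k$ large enough $m_k \ge 1$, and the predecessors $(x_{m_k-1})_{k\in\N}$ form a bounded sequence, from which Bolzano--Weierstrass extracts a convergent sub-subsequence $(x_{m_{k_l}-1})_{l\in\N}$ with limit $\ushort{x} \in C$. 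Applying \eqref{eq:PGDsubsequencePstationarityMeasure} to this convergent subsequence (taking $i_j = m_{k_l}-1$) gives $\dist(-\nabla f(x_{m_{k_l}}), \proxnorcone{C}{x_{m_{k_l}}}) \to 0$ as $l \to \infty$, which is precisely a further subsequence of $(x_{m_k})_{k\in\N}$ along which the P-stationarity measure vanishes. The passage to predecessors is thus the device that converts control over successors into control over every index, yielding \eqref{eq:PGDboundedSequencePstationarityMeasure}.
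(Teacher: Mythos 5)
Your proposal is correct and follows essentially the same route as the paper's proof: the same reduction to Theorem~\ref{thm:PGDaccumulatesAtPStationaryPoints} and Propositions~\ref{prop:NonmonotonePGD}--\ref{prop:NonmonotonePGDAverage}, and the same key device of passing to the predecessors $(x_{m_k-1})_{k\in\N}$, extracting a convergent sub-subsequence, and applying \eqref{eq:PGDsubsequencePstationarityMeasure}. The only difference is cosmetic: you phrase the final step via the subsequence criterion for convergence to zero, whereas the paper runs the logically equivalent argument by contradiction.
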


\begin{proof}
Assume that $(x_i)_{i \in \N}$ is bounded. It suffices to establish \eqref{eq:PGDboundedSequencePstationarityMeasure} and to prove that all accumulation points of $(x_i)_{i \in \N}$ have the same image by $f$; the other statements follow from Theorem~\ref{thm:PGDaccumulatesAtPStationaryPoints}.

The proof that all accumulation points of $(x_i)_{i \in \N}$ have the same image by $f$ is based on the argument given in the proof of \cite[Theorem~65]{Polak1971}. Assume that $(x_{i_k})_{k \in \N}$ and $(x_{j_k})_{k \in \N}$ converge respectively to $\ushort{x}$ and $\oshort{x}$. Being bounded, the sequence $(x_i)_{i \in \N}$ is contained in a compact set. By Propositions~\ref{prop:NonmonotonePGD} and~\ref{prop:NonmonotonePGDAverage}, the sequence $(f(x_i))_{i \in \N}$ converges; Proposition~\ref{prop:NonmonotonePGD} applies because a continuous real-valued function is bounded from below and uniformly continuous on every compact set \cite[Propositions~1.3.3 and 1.3.5]{Willem}. Therefore, $f(\ushort{x}) = \lim_{k \to \infty} f(x_{i_k}) = \lim_{i \to \infty} f(x_i) = \lim_{k \to \infty} f(x_{j_k}) = f(\oshort{x})$.

Let us establish \eqref{eq:PGDboundedSequencePstationarityMeasure}. Assume, for the sake of contradiction, that \eqref{eq:PGDboundedSequencePstationarityMeasure} does not hold. Then, there exist $\varepsilon \in (0, \infty)$ and a subsequence $(x_{i_j})_{j \in \N}$ such that $i_0 \ge 1$ and $\dist(-\nabla f(x_{i_j}), \proxnorcone{C}{x_{i_j}}) > \varepsilon$ for all $j \in \N$. Since $(x_{i_j-1})_{j \in \N}$ is bounded, it contains a subsequence $(x_{i_{j_k}-1})_{k \in \N}$ that converges to a point $\ushort{x} \in C$. Therefore, by \eqref{eq:PGDsubsequencePstationarityMeasure},
\begin{align*}
\lim_{k \to \infty} \dist(-\nabla f(x_{i_{j_k}}), \proxnorcone{C}{x_{i_{j_k}}}) = 0,
\end{align*}
a contradiction.
\end{proof}

\section{Examples of feasible sets on which PGD can be practically implemented}
\label{sec:ExamplesFeasibleSets}
Examples of a set $C$ on which $\pgd$ can be practically implemented include:
\begin{enumerate}
\item the closed cone $\sparse{n}{s}$ of $s$-sparse vectors of $\R^n$, namely those having at most $s$ nonzero components, $n$ and $s$ being positive integers such that $s < n$;
\item the closed cone $\sparse{n}{s} \cap \R_+^n$ of nonnegative $s$-sparse vectors of $\R^n$;
\item the determinantal variety \cite[Lecture~9]{Harris}
\begin{equation*}
\R_{\le r}^{m \times n} \coloneq \{X \in \R^{m \times n} \mid \rank X \le r\},
\end{equation*}
$m$, $n$, and $r$ being positive integers such that $r < \min\{m,n\}$;
\item the closed cone
\begin{equation*}
\mathrm{S}_{\le r}^+(n) \coloneq \{X \in \R_{\le r}^{n \times n} \mid X^\tp = X,\, X \succeq 0\}
\end{equation*}
of order-$n$ real symmetric positive-semidefinite matrices of rank at most $r$, $n$ and $r$ being positive integers such that $r < n$.
\end{enumerate}
Indeed, for every set in this list, the projection map, the tangent cone, the regular normal cone, and the normal cone are explicitly known; see \cite[\S\S 6 and 7.4]{OlikierGallivanAbsil2023} and references therein. In particular, it is known that these sets are not Clarke regular at infinitely many points. In this section, we prove that, for these sets, regular normals are proximal normals.

As detailed in \cite{OlikierGallivanAbsil2023}, if $C$ is a set in this list, then there exist a positive integer $p$ and disjoint nonempty smooth submanifolds $S_0, \dots, S_p$ of $\mathcal{E}$ such that $\overline{S_p} = C$ and, for all $i \in \{0, \dots, p\}$, $\overline{S_i} = \bigcup_{j=0}^i S_j$. This implies that $\{S_0, \dots, S_p\}$ is a \emph{stratification} of $C$ satisfying the \emph{condition of the frontier} \cite[\S 5]{Mather}. Thus, $C$ is called a \emph{stratified set} and $S_0, \dots, S_p$ are called the \emph{strata} of $\{S_0, \dots, S_p\}$.

\begin{proposition}
\label{prop:ProximalRegularLowSparsityOrRank}
Let $C$ be a set in the list. For all $x \in C$,
\begin{equation*}
\proxnorcone{C}{x} = \regnorcone{C}{x}
\end{equation*}
and, if $x \notin S_p$, then
\begin{equation*}
\regnorcone{C}{x} \subsetneq \norcone{C}{x}.
\end{equation*}
\end{proposition}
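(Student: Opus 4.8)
The plan is to prove the two assertions separately, using the explicit descriptions of $\proj{C}{\cdot}$, $\tancone{C}{x}$, $\regnorcone{C}{x}$, and $\norcone{C}{x}$ for these sets recalled from \cite{OlikierGallivanAbsil2023}, together with the definition of proximal normals: $v \in \proxnorcone{C}{x}$ if and only if there is $\oshort{\alpha} \in (0, \infty)$ with $x \in \proj{C}{x + \oshort{\alpha} v}$, equivalently $\ip{v}{y-x} \le \frac{1}{2\oshort{\alpha}} \norm{y-x}^2$ for all $y \in C$. Throughout, write $S_p$ for the top stratum (maximal sparsity or rank); a point $x$ belongs to $S_p$ exactly when, locally, $C$ coincides with a single smooth submanifold (the coordinate subspace $\R^{\supp(x)}$, the manifold of rank-$r$ matrices $\mathcal{M}_r$, or its symmetric positive-semidefinite analogue).

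For the equality $\proxnorcone{C}{x} = \regnorcone{C}{x}$, the inclusion $\proxnorcone{C}{x} \subseteq \regnorcone{C}{x}$ is already granted by~\eqref{eq:NestedNormalCones}, so only $\regnorcone{C}{x} \subseteq \proxnorcone{C}{x}$ needs proof. I would fix $v \in \regnorcone{C}{x}$ and exhibit a step $\oshort{\alpha}$ for which $x \in \proj{C}{x + \oshort{\alpha} v}$, using the explicit projection (hard thresholding for the sparse cones, truncated SVD for $\R_{\le r}^{m\times n}$, truncated eigendecomposition for $\mathrm{S}_{\le r}^+(n)$). The mechanism is uniform: the explicit form of a regular normal places $v$ in the subspace complementary to the active support or range of $x$, and with the correct sign (nonpositive off-support entries for $\sparse{n}{s}\cap\R_+^n$, a negative-semidefinite complementary block for $\mathrm{S}_{\le r}^+(n)$, a rank constraint forcing $v=0$ for the two-sided cones $\sparse{n}{s}$ and $\R_{\le r}^{m\times n}$). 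Hence adding $\oshort{\alpha}v$ perturbs only these complementary directions, and for $\oshort{\alpha}$ small enough the components of $x + \oshort{\alpha}v$ that survive the thresholding or truncation are exactly those of $x$, so $x \in \proj{C}{x + \oshort{\alpha} v}$. Two shortcuts trim the case analysis: when $x \in S_p$, $C$ is locally a smooth submanifold, so $x$ has positive reach and every $v$ in its (linear) normal space is proximal for all small $\oshort{\alpha}$; and when $x \notin S_p$ in the one-sided cases the added directions are always removed by the projection (a negative coordinate is clipped to $0$, a nonpositive eigenvalue is dropped), so in fact $x \in \proj{C}{x + \oshort{\alpha}v}$ for \emph{every} $\oshort{\alpha} \in (0,\infty)$.

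I expect this inclusion $\regnorcone{C}{x} \subseteq \proxnorcone{C}{x}$ to be the main obstacle, since it is the only place where the detailed structure of each set and the precise threshold on $\oshort{\alpha}$ must be verified; one must confirm, case by case, that the dominant part of $x + \oshort{\alpha}v$ is exactly $x$ and that the perturbation is either subdominant (top stratum, two-sided lower strata) or annihilated by thresholding (one-sided lower strata).

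For the strict inclusion $\regnorcone{C}{x} \subsetneq \norcone{C}{x}$ when $x \notin S_p$, I would approach $x$ from the top stratum. Since $\overline{S_p} = C$, there is a sequence $(x_k)_{k\in\N}$ in $S_p$ converging to $x$ (turn on the missing support coordinates, or add a small complementary rank-$(r-\ell)$ block, where $\ell$ is the sparsity or rank of $x$). At each $x_k$ the set is locally the smooth top manifold, so $\regnorcone{C}{x_k} = (\tancone{C}{x_k})^\circ$ is its full normal space, a linear subspace of fixed dimension; choosing the approach appropriately, these subspaces converge to a nonzero linear subspace $L$. By definition of the general normal cone, $L \subseteq \norcone{C}{x}$. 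On the other hand, at a point $x \notin S_p$ the explicit forms show that $\tancone{C}{x}$ spans $\mathcal{E}$, so its polar $\regnorcone{C}{x}$ is pointed, i.e., contains no line. A nonzero linear subspace cannot be contained in a pointed cone, hence $L \not\subseteq \regnorcone{C}{x}$, and any $v \in L \setminus \regnorcone{C}{x}$ witnesses $\regnorcone{C}{x} \subsetneq \norcone{C}{x}$, which is precisely the failure of Clarke regularity at $x$.
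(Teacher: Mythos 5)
Your proof of the equality $\proxnorcone{C}{x} = \regnorcone{C}{x}$ follows essentially the same route as the paper's (positive reach of the smooth top stratum, $\regnorcone{C}{x} = \{0\}$ at lower strata of the two-sided sets, annihilation of the perturbation by the projection at lower strata of the one-sided sets), and your argument for the strict inclusion is genuinely different from the paper's, which simply cites explicit computations of $\regnorcone{C}{x}$ and $\norcone{C}{x}$ from the literature for each of the four sets. However, your strictness argument has a genuine gap for $C = \mathrm{S}_{\le r}^+(n)$. There the ambient space is $\mathcal{E} = \R^{n \times n}$, and $C$ lies in the proper subspace $\mathrm{S}(n)$ of symmetric matrices. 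Hence $\tancone{C}{x} \subseteq \mathrm{S}(n)$ never spans $\mathcal{E}$, and the regular normal cone, $\regnorcone{\mathrm{S}_{\le r}^+(n)}{X} = \mathrm{S}(n)^\perp + \{Z \in \mathrm{S}^-(n) \mid XZ = 0\}$, contains the nonzero linear subspace $\mathrm{S}(n)^\perp$ of skew-symmetric matrices, so it is \emph{not} pointed. Your chain ``$\tancone{C}{x}$ spans $\mathcal{E}$, hence $\regnorcone{C}{x}$ is pointed, hence it cannot contain the nonzero subspace $L$'' therefore breaks at its first link; worse, the limit $L$ of the normal spaces along your approaching sequence itself contains $\mathrm{S}(n)^\perp \subseteq \regnorcone{C}{x}$, so no contradiction can be extracted from $L$ merely being a nonzero linear subspace.

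The gap is repairable, but pointedness must be replaced by a witness that separates the symmetric parts. For $X$ of rank $\ushort{r} < r$, pick $Z' \succeq 0$ of rank $r - \ushort{r}$ with $\im Z' \subseteq \ker X$ and set $X_k \coloneq X + \frac{1}{k} Z' \in S_p$, so that $X_k \to X$; then pick a nonzero $W \succeq 0$ with $\im W \subseteq \ker X \cap \ker Z' = \ker X_k$ (possible since $r < n$). Locally around $X_k$ the set is the smooth manifold of rank-$r$ positive-semidefinite matrices, so $W$ lies in the normal space $\regnorcone{C}{X_k}$ for every $k$, hence $W \in \norcone{C}{X}$; but $W \notin \regnorcone{C}{X}$, because its symmetric part is $W$ itself, which is positive semidefinite and nonzero, hence not in $\mathrm{S}^-(n)$. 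The same ambient-space oversight also makes your equality argument for $\mathrm{S}_{\le r}^+(n)$ incomplete, though not fatally so: a regular normal there is not just ``a negative-semidefinite complementary block'' but carries an arbitrary skew-symmetric component, and one must additionally argue (as the paper does via a result of Dax) that adding a skew-symmetric matrix does not change the projection onto a set of symmetric matrices before the eigenvalue-truncation argument applies.
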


Since the proof of Proposition~\ref{prop:ProximalRegularLowSparsityOrRank} relies on significantly different concepts than those previously used, we present it in Appendix~\ref{app:examples_proof}.

\section{Comparison of PGD and P$^\mathbf{2}$GD on a simple example}
\label{sec:PGDvsP2GD}
$\ppgd$, which is short for projected-projected gradient descent, was introduced in \cite[Algorithm~3]{SchneiderUschmajew2015} for $C \coloneq \R_{\le r}^{m \times n}$ and extended to an arbitrary set $C$ in \cite[Algorithm~5.1]{OlikierGallivanAbsil2023}. It works like $\pgd$ except that it involves an additional projection: given $x \in C$ as input, the $\ppgd$ map \cite[Algorithm~5.1]{OlikierGallivanAbsil2023} performs a backtracking projected line search along an arbitrary projection $g$ of $-\nabla f(x)$ onto $\tancone{C}{x}$: it computes an arbitrary projection $y$ of $x+\alpha g$ onto $C$ for decreasing values of the step size $\alpha \in (0, \infty)$ until $y$ satisfies an Armijo condition.

As pointed out in \cite[\S 3.2]{Scholtes2004}, the convergence of optimization algorithms that use descent directions in the tangent cone, such as $\ppgd$, often suffers from the discontinuity of the tangent cone. In \cite[\S 2.2]{LevinKileelBoumal2023}, on an instance of~\eqref{eq:MinDiffFunctionClosedSet} where $\mathcal{E} \coloneq \R^{3 \times 3}$ and $C \coloneq \R_{\le 2}^{3 \times 3}$, $\ppgd$ is proven to generate a sequence converging to a point of rank one that is M-stationary but not B-stationary. Several methods are compared numerically on this instance in \cite[\S 8.2]{OlikierGallivanAbsil2024}.

In this section, monotone $\pgd$ and $\ppgd$ are compared analytically on the instance of~\eqref{eq:MinDiffFunctionClosedSet} where $\mathcal{E} \coloneq \R^2$, $C \coloneq \sparse{2}{1}$, $f(x) \coloneq \frac{1}{2} \norm{x-x_*}^2$ for all $x \in \R^2$, $x_* \coloneq (a, 0)$, and $a \in \R \setminus \{0\}$. For all $x \in \R^2$, $\nabla f(x) = x-x_*$. Thus, the global Lipschitz constant of $\nabla f$ is $1$; in particular, $f$ satisfies~\ref{it:Lipschitz_diff}. Both algorithms are used with $\ushort{\alpha} \coloneq \oshort{\alpha} \coloneq \alpha \in (0, 2)$ and an arbitrary $\beta \in (0, 1)$. The initial iterate is $(0, b)$ for some $b \in \R \setminus \{0\}$.

We recall from \cite[Proposition~7.13]{OlikierGallivanAbsil2023} that $\tancone{\sparse{2}{1}}{0, 0} = \sparse{2}{1}$ and, for all $t \in \R \setminus \{0\}$,
\begin{align*}
\tancone{\sparse{2}{1}}{0, t} = \{0\} \times \R,&&
\tancone{\sparse{2}{1}}{t, 0} = \R \times \{0\},
\end{align*}
from \cite[Propositions~7.16 and 7.17]{OlikierGallivanAbsil2023} that
\begin{equation*}
\regnorcone{\sparse{2}{1}}{0, 0} = \{(0, 0)\} \subsetneq \sparse{2}{1} = \norcone{\sparse{2}{1}}{0, 0}
\end{equation*}
and, for all $t \in \R \setminus \{0\}$,
\begin{align*}
\regnorcone{\sparse{2}{1}}{0, t} = \R \times \{0\},&&
\regnorcone{\sparse{2}{1}}{t, 0} = \{0\} \times \R,
\end{align*}
and from Proposition~\ref{prop:ProximalRegularLowSparsityOrRank} that $\proxnorcone{\sparse{2}{1}}{x} = \regnorcone{\sparse{2}{1}}{x}$ for all $x \in \sparse{2}{1}$.

Proposition~\ref{prop:PGDvsP2GD} explicitly describes the sequences generated by $\pgd$ and $\ppgd$ for $c$ small enough to ensure that, at every iteration, the monotone Armijo condition is satisfied with the initial step size and thus no backtracking occurs. We omit its proof, which consists in elementary computations.

\begin{proposition}
\label{prop:PGDvsP2GD}
If $\alpha = 1$ and $c \in (0, \frac{1}{2}]$, then $\pgd$ and $\ppgd$ generate the finite sequences $((0, b), (a, 0))$ and $((0, b), (0, 0), (a, 0))$, respectively.
If $\alpha \ne 1$, then both algorithms generate infinite sequences.
\begin{itemize}
\item For every $c \in (0, \frac{2-\alpha}{2}]$, $\ppgd$ generates the sequence $((0, (1-\alpha)^ib))_{i \in \N}$, which converges to $(0, 0)$.
\item For every $c \in (0, \frac{2-\alpha}{4})$:
\begin{itemize}
\item if $\alpha|a|/|b| > |1-\alpha|$, then $\pgd$ generates the sequence
\begin{equation*}
((0, b), ((a(1-(1-\alpha)^{i+1}), 0))_{i \in \N});
\end{equation*}
\item if $\alpha|a|/|b| \le |1-\alpha|$, then $i_* \coloneq \left\lfloor\frac{\ln(\alpha|a|/|b|)}{\ln(|1-\alpha|)}\right\rfloor \in \N \setminus \{0\}$ and $\pgd$ generates the sequence
\begin{equation*}
(((0, (1-\alpha)^ib))_{i=0}^{i=i_*}, ((a(1-(1-\alpha)^{i+1}), 0))_{i \in \N})
\end{equation*}
if $\frac{\ln(\alpha|a|/|b|)}{\ln(|1-\alpha|)} \notin \N$ and
\begin{align*}
& (((0, (1-\alpha)^ib))_{i=0}^{i=i_*}, ((a(1-(1-\alpha)^{i+1}), 0))_{i \in \N})\\
\text{or } & (((0, (1-\alpha)^ib))_{i=0}^{i=i_*+1}, ((a(1-(1-\alpha)^{i+1}), 0))_{i \in \N})
\end{align*}
if $\frac{\ln(\alpha|a|/|b|)}{\ln(|1-\alpha|)} \in \N$.
\end{itemize}
Thus, every sequence generated by $\pgd$ converges to $(a, 0)$.
\end{itemize}
\end{proposition}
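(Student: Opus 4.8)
The plan is to compute everything explicitly, exploiting the trivial geometry of $C=\sparse{2}{1}$, which is the union of the two coordinate axes. First I would record the projection: a point $(u,v)$ lies at distance $|v|$ from the horizontal axis and $|u|$ from the vertical axis, so $\proj{C}{u,v}=\{(u,0)\}$ when $|v|<|u|$, $\{(0,v)\}$ when $|u|<|v|$, and $\{(u,0),(0,v)\}$ when $|u|=|v|$. Since $\nabla f(x)=x-x_*$, the inner step of $\pgd$ produces the candidate $x-\alpha\nabla f(x)=(1-\alpha)x+\alpha x_*$. From a point $(0,\tau)$ on the vertical axis this candidate equals $(\alpha a,(1-\alpha)\tau)$, so the next iterate stays on the vertical axis, namely $(0,(1-\alpha)\tau)$, precisely when $\alpha|a|<|1-\alpha|\,|\tau|$, and jumps to $(\alpha a,0)$ on the horizontal axis precisely when $\alpha|a|>|1-\alpha|\,|\tau|$; from a point $(s,0)$ the candidate $((1-\alpha)s+\alpha a,0)$ already lies in $C$, so the iterate stays on the horizontal axis and satisfies the scalar recursion $s_{i+1}=(1-\alpha)s_i+\alpha a$.

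Next I would check that, for the stated ranges of $c$, the monotone Armijo test passes at the prescribed step size $\alpha$, so the while loop of Algorithm~\ref{algo:PGDmap} never backtracks and the iterates are exactly the projections just described. As $f$ is quadratic with identity Hessian, $f(y)=f(x)+\ip{\nabla f(x)}{y-x}+\frac12\norm{y-x}^2$ holds with equality, so the Armijo inequality reduces to $(1-c)\ip{\nabla f(x)}{y-x}+\frac12\norm{y-x}^2\le 0$, which I can evaluate in closed form for each move type. A short computation shows that a pure vertical step and a pure horizontal step both require $c\le\frac{2-\alpha}{2}$, whereas the transition step from $(0,\tau)$ to $(\alpha a,0)$ requires $a^2\alpha\big(c-\frac{2-\alpha}{2}\big)\le\tau^2\big(\frac12-c\big)$; the worst configuration, a transition following at least one genuine vertical step so that $\tau^2$ is as small as $\alpha^2a^2$, yields $c\le\frac{1}{1+\alpha}$. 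Because $(2-\alpha)(1+\alpha)=2+\alpha-\alpha^2\le\frac94<4$ on $(0,2)$, the single clean bound $c<\frac{2-\alpha}{4}$ lies below both thresholds (and below $\frac12$), guaranteeing no backtracking. The $\ppgd$ iteration makes only vertical steps, which is why its admissible range is the larger $c\le\frac{2-\alpha}{2}$.

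With no backtracking secured, I would solve the recursions. On the vertical axis the second coordinate obeys $\tau_i=(1-\alpha)^ib$, and the jump occurs at the first index $i$ with $\alpha|a|>|1-\alpha|^{i+1}|b|$; taking logarithms and using $|1-\alpha|<1$ (so $\ln|1-\alpha|<0$) turns this into the threshold governed by $i_*=\lfloor\ln(\alpha|a|/|b|)/\ln|1-\alpha|\rfloor$, while the borderline case $\ln(\alpha|a|/|b|)/\ln|1-\alpha|\in\N$ is exactly the tie $\alpha|a|=|1-\alpha|^{i+1}|b|$, in which Algorithm~\ref{algo:PGDmap} may legitimately choose either projection, producing the two admissible sequences listed. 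Once on the horizontal axis, $s_{i+1}=(1-\alpha)s_i+\alpha a$ has closed form $s_i=a\big(1-(1-\alpha)^i\big)$ after reindexing from the entry value $\alpha a$, and this converges to $a$; assembling the two phases yields the displayed formulas and the convergence of every $\pgd$ sequence to $(a,0)$. The degenerate case $\alpha=1$ I would treat directly: one step sends $(0,b)$ to $(a,0)$ for $\pgd$, while $\ppgd$ first reaches $(0,0)$ and then $(a,0)$, and the single-step Armijo computation gives $c\in(0,\frac12]$.

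For $\ppgd$ I would use the tangent cones recalled before the statement: at $(0,\tau)$ with $\tau\ne0$ one has $\tancone{C}{0,\tau}=\{0\}\times\R$, so projecting $-\nabla f(0,\tau)=(a,-\tau)$ onto it annihilates the first component and the search direction is $(0,-\tau)$; the line search then moves along the vertical axis and its projection onto $C$ stays there, giving $\tau_i=(1-\alpha)^ib\to 0$ and convergence to $(0,0)$. Finally I would observe that $-\nabla f(0,0)=(a,0)\in\norcone{C}{0,0}\setminus\regnorcone{C}{0,0}$, so the $\ppgd$ limit is M-stationary but not B-stationary, in sharp contrast with the $\pgd$ limit $(a,0)$. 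The parts I expect to be most delicate are not analytic but organizational: aligning the concatenated index ranges (in particular $i_*$ and the reindexing on the horizontal axis) and the tie case. The only genuinely quantitative subtlety is pinning down the transition-step Armijo bound and verifying that $\frac{2-\alpha}{4}$ uniformly undercuts both $\frac{2-\alpha}{2}$ and $\frac{1}{1+\alpha}$, which is the reason the $\pgd$ range is a factor of two smaller than the $\ppgd$ range.
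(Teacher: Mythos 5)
The paper offers no proof to compare against: it states only that the proof ``consists in elementary computations'' and omits it. Your plan is exactly that intended computation, and most of it is sound: the projection formula for $\sparse{2}{1}$, the exact expansion $f(y)=f(x)+\ip{\nabla f(x)}{y-x}+\frac{1}{2}\norm{y-x}^2$ that turns the Armijo test into $(1-c)\ip{\nabla f(x)}{y-x}+\frac{1}{2}\norm{y-x}^2\le 0$, the threshold $c\le\frac{2-\alpha}{2}$ for pure vertical and horizontal steps, the transition inequality $\alpha a^2\bigl(c-\frac{2-\alpha}{2}\bigr)\le\tau^2\bigl(\frac{1}{2}-c\bigr)$, and the conclusion that $c<\frac{2-\alpha}{4}$ rules out backtracking (indeed, $c<\frac{2-\alpha}{4}$ makes both terms of the transition inequality nonpositive, so your detour through the bound $\frac{1}{1+\alpha}$ is unnecessary, though harmless). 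The recursions, the $\alpha=1$ cases, and the $\ppgd$ analysis are also correct.

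The genuine gap is the tie case, which you explicitly defer (``producing the two admissible sequences listed'') instead of verifying, and which does not survive verification. By your own characterization, the tie $\alpha|a|=|1-\alpha|^{i+1}|b|$ occurs at the iteration index $i$ satisfying $i+1=\ln(\alpha|a|/|b|)/\ln|1-\alpha|=i_*$, i.e., at index $i_*-1$, not $i_*$. Consequently the two sequences the algorithm can generate have vertical prefixes $((0,(1-\alpha)^ib))_{i=0}^{i_*-1}$ (tie resolved to the horizontal projection) and $((0,(1-\alpha)^ib))_{i=0}^{i_*}$ (tie resolved to the vertical projection, after which the transition is forced, since $|\tau_{i_*}|=\alpha|a|$ gives $|1-\alpha|\,|\tau_{i_*}|<\alpha|a|$), rather than the prefixes up to $i_*$ and $i_*+1$ displayed in the statement. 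Concretely, take $\alpha=\frac{1}{2}$, $a=b=1$, so $i_*=1$: from $(0,1)$ the candidate $(\frac12,\frac12)$ admits both projections, and if $(0,\frac12)$ is chosen, the next candidate $(\frac12,\frac14)$ has the \emph{unique} projection $(\frac12,0)$; the vertical prefix therefore ends at index $0$ or $1$, never at index $2$, so the second displayed sequence is unreachable while the admissible sequence $((0,1),(\tfrac12,0),(\tfrac34,0),\dots)$ is not listed. In other words, carrying out your own plan at the step you yourself flagged as the delicate one refutes, rather than proves, the tie sub-case as displayed (the indices there need to be shifted down by one); your proof as written papers over exactly this point.
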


In conclusion, if $\alpha \ne 1$, then $\ppgd$ converges to $(0, 0)$, which is M-stationary but not B-stationary, while $\pgd$ converges to $(a, 0)$, which is P-stationary and even a global minimizer of $f|_{\sparse{2}{1}}$ (and $f$). This is illustrated in Figure~\ref{fig:PGDvsP2GD} for some choice of $a$, $b$, and $\alpha$.

By Proposition~\ref{prop:PGDvsP2GD}, for every sequence $(x_i)_{i \in \N}$ generated by $\ppgd$, it holds that
\begin{equation*}
\lim_{i \to \infty} \dist(-\nabla f(x_i), \regnorcone{\sparse{2}{1}}{x_i}) = 0.
\end{equation*}
Thus, the measure of B-stationarity $\sparse{2}{1} \to \R : x \mapsto \dist(-\nabla f(x), \regnorcone{\sparse{2}{1}}{x})$ is not lower semicontinuous at $(0, 0)$, and the convergence to an M-stationary point that is not B-stationary cannot be suspected based on the mere observation of this limit. In the terminology of \cite{LevinKileelBoumal2023}, $((0, 0), (x_i)_{i \in \N}, f)$ is an apocalypse.

\begin{figure}[h]
\begin{center}
\begin{tikzpicture}[scale=2]
\def\a{1}
\def\b{1}
\pgfmathsetmacro\alph{0.45}
\draw [->] (-0.3, 0) -- ({\a+0.5}, 0) node [right] {$\R \times \{0\}$};
\draw [->] (0, -0.3) -- (0, {\b+0.5}) node [above] {$\{0\} \times \R$};
\draw (\a, 0) node {{\tiny $\bullet$}} node [below] {$(a, 0)$};
\draw (0, \b) node {{\tiny $\bullet$}} node [left] {$(0, b)$};
\foreach \i in {0, 1, ..., 4}
{
\draw (0, {\b*(1-\alph)^\i}) node {{\tiny $\bullet$}};
\draw [->] (0, {\b*(1-\alph)^\i}) -- ({\alph*\a}, {\b*(1-\alph)^(\i+1)});
}
\end{tikzpicture}
\begin{tikzpicture}[scale=2]
\def\a{1}
\def\b{1}
\pgfmathsetmacro\alph{0.45}
\pgfmathsetmacro\istar{floor(ln(\alph*abs(\a)/abs(\b))/ln(abs(1-\alph)))}
\draw [->] (-0.3, 0) -- ({\a+0.5}, 0) node [right] {$\R \times \{0\}$};
\draw [->] (0, -0.3) -- (0, {\b+0.5}) node [above] {$\{0\} \times \R$};
\draw (\a, 0) node {{\tiny $\bullet$}} node [below] {$(a, 0)$};
\draw (0, \b) node {{\tiny $\bullet$}} node [left] {$(0, b)$};
\foreach \i in {0, 1, ..., \istar}
{
\draw (0, {\b*(1-\alph)^\i}) node {{\tiny $\bullet$}};
\draw [->] (0, {\b*(1-\alph)^\i}) -- ({\alph*\a}, {\b*(1-\alph)^(\i+1)});
}
\foreach \i in {1, 2, ..., 4}
{
\draw ({\a*(1-(1-\alph)^\i)}, 0) node {{\tiny $\bullet$}};
}
\end{tikzpicture}
\end{center}
\caption{First few iterates generated by $\pgd$ (right) and $\ppgd$ (left) on the instance of~\eqref{eq:MinDiffFunctionClosedSet} studied in section~\ref{sec:PGDvsP2GD} with $a \coloneq b \coloneq 1$ and $\alpha \coloneq 0.45$. The arrows represent $x_i-\alpha \nabla f(x_i)$. The point $(a, 0)$, which is the unique global minimizer, is also represented. It is already visible from the first few iterates that $\ppgd$ converges to the M-stationary point $(0, 0)$ while $\pgd$ converges to the global minimizer.}
\label{fig:PGDvsP2GD}
\end{figure}

\section{Conclusion}
\label{sec:Conclusion}
The main contribution of this paper is the proof of Theorem~\ref{thm:StationarityAccumulationPointsPGD}. This theorem ensures that $\pgd$ (Algorithm~\ref{algo:PGD}) enjoys the strongest stationarity properties that can be expected for problem~\eqref{eq:MinDiffFunctionClosedSet} under the considered assumptions.

A sufficient condition for the convergence of a sequence generated by $\pgd$ is provided in Theorem~\ref{thm:PGDaccumulatesAtBStationaryPoints}. However, if satisfied, this condition does not offer a characterization of the rate of convergence. This important matter is addressed in \cite{JiaKanzowMehlitz} for monotone $\pgd$ under the assumption that $f$ satisfies \ref{it:Lipschitz_diff} and a Kurdyka--{\L}ojasiewicz property.

Two possible extensions of this work are left for future research.
First, can Theorem~\ref{thm:StationarityAccumulationPointsPGD} be extended to an algorithm that uses other descent directions than the negative gradient? For example, a search direction at a point $x \in C$ that is not B-stationary for~\eqref{eq:MinDiffFunctionClosedSet} could be a vector $v \notin \proxnorcone{C}{x}$ that satisfies \cite[(2) and (3)]{GrippoLamparielloLucidi}, i.e., $\ip{\nabla f(x)}{v} \le - c_1 \norm{\nabla f(x)}^2$ and $\norm{v} \le c_2 \norm{\nabla f(x)}$ with $c_1, c_2 \in (0, \infty)$.

Second, can Theorem~\ref{thm:StationarityAccumulationPointsPGD} be extended to the proximal gradient algorithm as defined in \cite[Algorithm~4.1]{KanzowMehlitz} or \cite[Algorithm~3.1]{DeMarchi}? The first step toward such an extension would be defining suitable stationarity notions for the corresponding problem whose objective function is not differentiable. Furthermore, significant adaptations would be needed, e.g., because inequality~\eqref{eq:ProjectedTranslationDistance}, which plays an instrumental role in our analysis, does not seem to admit a straightforward extension.

\appendix

\section{Proof of Proposition~\ref{prop:ProximalRegularLowSparsityOrRank}}
\label{app:examples_proof}
The strict inclusion follows from \cite[Proposition~7.16]{OlikierGallivanAbsil2023} and \cite[Theorem~3.9]{BauschkeLukePhanWang2014} if $C = \sparse{n}{s}$, from \cite[Proposition~6.7]{OlikierGallivanAbsil2023} and \cite[Theorem~3.4]{Tam2017} if $C = \sparse{n}{s} \cap \R_+^n$, from \cite[Corollary~2.3 and Theorem~3.1]{HosseiniLukeUschmajew2019} if $C = \R_{\le r}^{m \times n}$, and from \cite[Propositions~6.28 and 6.29]{OlikierGallivanAbsil2023} if $C = \mathrm{S}_{\le r}^+(n)$. By \eqref{eq:NestedNormalCones}, it remains to prove that, for all $x \in C$, $\proxnorcone{C}{x} \supseteq \regnorcone{C}{x}$. This follows from \cite[Lemma~4]{AbsilMalick} if $x \in S_p$. Let $x \in C \setminus S_p$. If $C$ is $\sparse{n}{s}$ or $\R_{\le r}^{m \times n}$, then, by \cite[Proposition~7.16]{OlikierGallivanAbsil2023} and \cite[Corollary~2.3]{HosseiniLukeUschmajew2019}, $\regnorcone{C}{x} = \{0\}$ and the result follows. If $C$ is $\sparse{n}{s} \cap \R_+^n$ or $\mathrm{S}_{\le r}^+(n)$, then the result follows from \cite[Proposition~6.7]{OlikierGallivanAbsil2023} and \cite[Proposition~3.2]{Tam2017} or \cite[Proposition~6.28]{OlikierGallivanAbsil2023} and \cite[Corollary~17]{Dax2014}; details are given below for completeness.

Assume that $C$ is $\sparse{n}{s} \cap \R_+^n$. Let $\supp(x) \coloneq \{i \in \{1, \dots, n\} \mid x_i \ne 0\}$. By \cite[Proposition~6.7]{OlikierGallivanAbsil2023},
\begin{equation*}
\regnorcone{\sparse{n}{s} \cap \R_+^n}{x} = \{v \in \R_-^n \mid \supp(v) \subseteq \{1, \dots, n\} \setminus \supp(x)\}.
\end{equation*}
Thus, by \cite[Proposition~3.2]{Tam2017}, for every $v \in \regnorcone{\sparse{n}{s} \cap \R_+^n}{x}$, $\proj{\sparse{n}{s} \cap \R_+^n}{x+v} = \{x\}$.

Assume now that $C$ is $\mathrm{S}_{\le r}^+(n)$. Let $I_n$ denote the identity matrix in $\R^{n \times n}$ and $\mathrm{O}(n) \coloneq \{U \in \R^{n \times n} \mid UU^\tp = I_n\}$ be an orthogonal group. By \cite[Proposition~6.28]{OlikierGallivanAbsil2023},
\begin{equation*}
\regnorcone{\mathrm{S}_{\le r}^+(n)}{X} = \mathrm{S}(n)^\perp + \{Z \in \mathrm{S}^-(n) \mid XZ = 0_{n \times n}\}
\end{equation*}
with $\mathrm{S}(n) \coloneq \{X \in \R^{n \times n} \mid X^\tp = X\}$, $\mathrm{S}(n)^\perp = \{X \in \R^{n \times n} \mid X^\tp = -X\}$, and $\mathrm{S}^-(n) \coloneq \{X \in \mathrm{S}(n) \mid X \preceq 0\}$. Let $Z \in \regnorcone{\mathrm{S}_{\le r}^+(n)}{X}$ and $Z_\mathrm{sym} \coloneq \frac{1}{2}(Z+Z^\tp)$. Then, by \cite[Corollary~17]{Dax2014}, $\proj{\mathrm{S}_{\le r}^+(n)}{X+Z} = \proj{\mathrm{S}_{\le r}^+(n)}{X+Z_\mathrm{sym}}$. Let $\ushort{r} \coloneq \rank X$ and $\tilde{r} \coloneq \rank Z_\mathrm{sym}$. Since $\im Z_\mathrm{sym} \subseteq \ker X$, $\tilde{r} \le n-\ushort{r}$ and there is $U \in \mathrm{O}(n)$ such that
\vspace{-1mm}
\begin{equation*}
X = U \diag(\lambda_1(X), \dots, \lambda_{\ushort{r}}(X), 0_{n-\ushort{r}}) U^\tp
\end{equation*}
\vspace{-1mm}
and
\vspace{-1mm}
\begin{equation*}
Z_\mathrm{sym} = U \diag(0_{n-\tilde{r}}, \lambda_{n-\tilde{r}+1}(Z_\mathrm{sym}), \dots, \lambda_n(Z_\mathrm{sym})) U^\tp
\end{equation*}
are eigenvalue decompositions. Hence,
\begin{equation*}
X+Z_\mathrm{sym} = U \diag(\lambda_1(X), \dots, \lambda_{\ushort{r}}(X), 0_{n-\ushort{r}-\tilde{r}}, \lambda_{n-\tilde{r}+1}(Z_\mathrm{sym}), \dots, \lambda_n(Z_\mathrm{sym})) U^\tp
\end{equation*}
is an eigenvalue decomposition. Thus, by \cite[Corollary~17]{Dax2014}, $\proj{\mathrm{S}_{\le r}^+(n)}{X+Z_\mathrm{sym}} = \{X\}$.

\section*{Acknowledgment}
The authors thank two anonymous referees for several helpful comments that improved the quality of the paper.

\bibliographystyle{siamplain}
\bibliography{PGD_bib}

\begin{thebibliography}{10}

\bibitem{AbsilMalick}
{\sc P.-A. Absil and J.~Malick}, {\em Projection-like retractions on matrix
  manifolds}, SIAM J. Optim., 22 (2012), pp.~135--158,
  \url{https://doi.org/10.1137/100802529}.

\bibitem{AhmadiZhang}
{\sc A.~A. Ahmadi and J.~Zhang}, {\em On the complexity of finding a local
  minimizer of a quadratic function over a polytope}, Math. Program., 195
  (2022), pp.~783--792, \url{https://doi.org/10.1007/s10107-021-01714-2}.

\bibitem{BalashovPolyakTremba}
{\sc M.~V. Balashov, B.~T. Polyak, and A.~A. Tremba}, {\em Gradient projection
  and conditional gradient methods for constrained nonconvex minimization},
  Numer. Funct. Anal. Optim., 41 (2020), pp.~822--849,
  \url{https://doi.org/10.1080/01630563.2019.1704780}.

\bibitem{BauschkeLukePhanWang2014}
{\sc H.~H. Bauschke, D.~R. Luke, H.~M. Phan, and X.~Wang}, {\em Restricted
  normal cones and sparsity optimization with affine constraints}, Found.
  Comput. Math., 14 (2014), pp.~63--83,
  \url{https://doi.org/10.1007/s10208-013-9161-0}.

\bibitem{BeckEldar}
{\sc A.~Beck and Y.~C. Eldar}, {\em Sparsity constrained nonlinear
  optimization: Optimality conditions and algorithms}, SIAM J. Optim., 23
  (2013), pp.~1480--1509, \url{https://doi.org/10.1137/120869778}.

\bibitem{BeckTeboulle}
{\sc A.~Beck and M.~Teboulle}, {\em A linearly convergent algorithm for solving
  a class of nonconvex/affine feasibility problems}, in Fixed-Point Algorithms
  for Inverse Problems in Science and Engineering, H.~H. Bauschke, R.~S.
  Burachik, P.~L. Combettes, V.~Elser, D.~R. Luke, and H.~Wolkowicz, eds.,
  vol.~49 of Springer Optim. Appl., Springer New York, 2011, pp.~33--48,
  \url{https://doi.org/10.1007/978-1-4419-9569-8_3}.

\bibitem{BenkoGfrerer2017}
{\sc M.~Benko and H.~Gfrerer}, {\em On estimating the regular normal cone to
  constraint systems and stationarity conditions}, Optimization, 66 (2017),
  pp.~61--92, \url{https://doi.org/10.1080/02331934.2016.1252915}.

\bibitem{BenkoGfrerer2018}
{\sc M.~Benko and H.~Gfrerer}, {\em New verifiable stationarity concepts for a
  class of mathematical programs with disjunctive constraints}, Optimization,
  67 (2018), pp.~1--23, \url{https://doi.org/10.1080/02331934.2017.1387547}.

\bibitem{CuiPang}
{\sc Y.~Cui and J.-S. Pang}, {\em Modern Nonconvex Nondifferentiable
  Optimization}, vol.~29 of MOS-SIAM Ser. Optim., SIAM, Philadelphia, PA, 2021,
  \url{https://doi.org/10.1137/1.9781611976748}.

\bibitem{Dax2014}
{\sc A.~Dax}, {\em Low-rank positive approximants of symmetric matrices}, Adv.
  Linear Algebra Matrix Theory, 4 (2014), pp.~172--185,
  \url{https://doi.org/10.4236/alamt.2014.43015}.

\bibitem{DeMarchi}
{\sc A.~{De Marchi}}, {\em Proximal gradient methods beyond monotony}, J.
  Nonsmooth Anal. Optim., 4 (2023),
  \url{https://doi.org/10.46298/jnsao-2023-10290}.

\bibitem{DoleckiGreco2007}
{\sc S.~Dolecki and G.~H. Greco}, {\em Towards historical roots of necessary
  conditions of optimality: Regula of {Peano}}, Control Cybern., 36 (2007),
  pp.~491--518, \url{https://eudml.org/doc/209500}.

\bibitem{DoleckiGreco2011}
{\sc S.~Dolecki and G.~H. Greco}, {\em Tangency vis-à-vis differentiability by
  {Peano}, {Severi} and {Guareschi}}, J. Convex Anal., 18 (2011), p.~301–339,
  \url{https://www.heldermann.de/JCA/JCA18/JCA182/jca18017.htm}.

\bibitem{FlegelKanzow2005JOTA}
{\sc M.~L. Flegel and C.~Kanzow}, {\em Abadie-type constraint qualification for
  mathematical programs with equilibrium constraints}, J. Optim. Theory Appl.,
  124 (2005), pp.~595--614, \url{https://doi.org/10.1007/s10957-004-1176-x}.

\bibitem{FlegelKanzow2005Optimization}
{\sc M.~L. Flegel and C.~Kanzow}, {\em On the {G}uignard constraint
  qualification for mathematical programs with equilibrium constraints},
  Optimization, 54 (2005), pp.~517--534,
  \url{https://doi.org/10.1080/02331930500342591}.

\bibitem{FlegelKanzowOutrata}
{\sc M.~L. Flegel, C.~Kanzow, and J.~V. Outrata}, {\em Optimality conditions
  for disjunctive programs with application to mathematical programs with
  equilibrium constraints}, Set-Valued Var. Anal., 15 (2007), pp.~139--162,
  \url{https://doi.org/10.1007/s11228-006-0033-5}.

\bibitem{FukushimaLin}
{\sc M.~Fukushima and G.-H. Lin}, {\em Smoothing methods for mathematical
  programs with equilibrium constraints}, in International Conference on
  Informatics Research for Development of Knowledge Society Infrastructure,
  2004 (ICKS 2004), Kyoto, Japan, 2004, IEEE, pp.~206--213,
  \url{https://doi.org/10.1109/ICKS.2004.1313426}.

\bibitem{FukushimaPang}
{\sc M.~Fukushima and J.-S. Pang}, {\em Convergence of a smoothing continuation
  method for mathematical progams with complementarity constraints}, in
  Ill-posed Variational Problems and Regularization Techniques, M.~Th\'{e}ra
  and R.~Tichatschke, eds., vol.~477 of Lecture Notes in Econom. and Math.
  Systems, Springer, Berlin, Heidelberg, 1999, pp.~99--110,
  \url{https://doi.org/10.1007/978-3-642-45780-7_7}.

\bibitem{FukushimaTseng}
{\sc M.~Fukushima and P.~Tseng}, {\em An implementable active-set algorithm for
  computing a {B}-stationary point of a mathematical program with linear
  complementarity constraints}, SIAM J. Optim., 12 (2002), pp.~724--739,
  \url{https://doi.org/10.1137/S1052623499363232}.

\bibitem{GaoPengYuan}
{\sc B.~Gao, R.~Peng, and Y.-x. Yuan}, {\em Low-rank optimization on {Tucker}
  tensor varieties}, Math. Program., to appear (2025),
  \url{https://doi.org/10.1007/s10107-024-02186-w}.

\bibitem{Gfrerer}
{\sc H.~Gfrerer}, {\em Optimality conditions for disjunctive programs based on
  generalized differentiation with application to mathematical programs with
  equilibrium constraints}, SIAM J. Optim., 24 (2014), pp.~898--931,
  \url{https://doi.org/10.1137/130914449}.

\bibitem{GrippoLamparielloLucidi}
{\sc L.~Grippo, F.~Lampariello, and S.~Lucidi}, {\em A nonmonotone line search
  technique for {N}ewton's method}, SIAM J. Numer. Anal., 23 (1986),
  pp.~707--716, \url{https://doi.org/10.1137/0723046}.

\bibitem{Guignard}
{\sc M.~Guignard}, {\em Generalized {Kuhn}--{Tucker} conditions for
  mathematical programming problems in a {Banach} space}, SIAM J. Control, 7
  (1969), pp.~232--241, \url{https://doi.org/10.1137/0307016}.

\bibitem{GuoLin}
{\sc L.~Guo and G.-H. Lin}, {\em Notes on some constraint qualifications for
  mathematical programs with equilibrium constraints}, J. Optim. Theory Appl.,
  156 (2013), pp.~600--616, \url{https://doi.org/10.1007/s10957-012-0084-8}.

\bibitem{HaLiuBarber2020}
{\sc W.~Ha, H.~Liu, and R.~F. Barber}, {\em An equivalence between critical
  points for rank constraints versus low-rank factorizations}, SIAM J. Optim.,
  30 (2020), pp.~2927--2955, \url{https://doi.org/10.1137/18M1231675}.

\bibitem{Harris}
{\sc J.~Harris}, {\em Algebraic Geometry: A First Course}, vol.~133 of Grad.
  Texts in Math., Springer-Verlag New York, 1992,
  \url{https://doi.org/10.1007/978-1-4757-2189-8}.

\bibitem{HosseiniLukeUschmajew2019}
{\sc S.~Hosseini, D.~R. Luke, and A.~Uschmajew}, {\em Tangent and normal cones
  for low-rank matrices}, in Nonsmooth Optimization and Its Applications,
  S.~Hosseini, B.~Mordukhovich, and A.~Uschmajew, eds., vol.~170 of Internat.
  Ser. Numer. Math., Birkh\"{a}user Cham, 2019, pp.~45--53,
  \url{https://doi.org/10.1007/978-3-030-11370-4_3}.

\bibitem{HuRalph}
{\sc X.~M. Hu and D.~Ralph}, {\em Convergence of a penalty method for
  mathematical programming with complementarity constraints}, J. Optim. Theory
  Appl., 123 (2004), pp.~365--390,
  \url{https://doi.org/10.1007/s10957-004-5154-0}.

\bibitem{JiaKanzowMehlitz}
{\sc X.~Jia, C.~Kanzow, and P.~Mehlitz}, {\em Convergence analysis of the
  proximal gradient method in the presence of the {K}urdyka--{{\L}}ojasiewicz
  property without global {L}ipschitz assumptions}, SIAM J. Optim., 33 (2023),
  pp.~3038--3056, \url{https://doi.org/10.1137/23M1548293}.

\bibitem{JiaEtAl}
{\sc X.~Jia, C.~Kanzow, P.~Mehlitz, and G.~Wachsmuth}, {\em An augmented
  {L}agrangian method for optimization problems with structured geometric
  constraints}, Math. Program., 199 (2023), pp.~1365--1415,
  \url{https://doi.org/10.1007/s10107-022-01870-z}.

\bibitem{KanzowMehlitz}
{\sc C.~Kanzow and P.~Mehlitz}, {\em Convergence properties of monotone and
  nonmonotone proximal gradient methods revisited}, J. Optim. Theory Appl., 195
  (2022), pp.~624--646, \url{https://doi.org/10.1007/s10957-022-02101-3}.

\bibitem{LevinKileelBoumal2023}
{\sc E.~Levin, J.~Kileel, and N.~Boumal}, {\em Finding stationary points on
  bounded-rank matrices: a geometric hurdle and a smooth remedy}, Math.
  Program., 199 (2023), pp.~831--864,
  \url{https://doi.org/10.1007/s10107-022-01851-2}.

\bibitem{LevinKileelBoumal2025}
{\sc E.~Levin, J.~Kileel, and N.~Boumal}, {\em The effect of smooth
  parametrizations on nonconvex optimization landscapes}, Math. Program., 209
  (2025), pp.~63--111, \url{https://doi.org/10.1007/s10107-024-02058-3}.

\bibitem{LiLuo2023}
{\sc X.~Li and Z.~Luo}, {\em Normal cones intersection rule and optimality
  analysis for low-rank matrix optimization with affine manifolds}, SIAM J.
  Optim., 33 (2023), pp.~1333--1360, \url{https://doi.org/10.1137/22M147863X}.

\bibitem{LiSongXiu2019}
{\sc X.~Li, W.~Song, and N.~Xiu}, {\em Optimality conditions for
  rank-constrained matrix optimization}, J. Oper. Res. Soc. China, 7 (2019),
  pp.~285--301, \url{https://doi.org/10.1007/s40305-019-00245-0}.

\bibitem{LiXiuZhou2020}
{\sc X.~Li, N.~Xiu, and S.~Zhou}, {\em Matrix optimization over low-rank
  spectral sets: Stationary points and local and global minimizers}, J. Optim.
  Theory Appl., 184 (2020), pp.~895--930,
  \url{https://doi.org/10.1007/s10957-019-01606-8}.

\bibitem{LuoQi2023}
{\sc Z.~Luo and L.~Qi}, {\em Optimality conditions for {Tucker} low-rank tensor
  optimization}, Comput. Optim. Appl., 86 (2023), pp.~1275--1298,
  \url{https://doi.org/10.1007/s10589-023-00465-4}.

\bibitem{LuoPangRalphWu}
{\sc Z.-Q. Luo, J.-S. Pang, D.~Ralph, and S.-Q. Wu}, {\em Exact penalization
  and stationarity conditions of mathematical programs with equilibrium
  constraints}, Math. Program., 75 (1996), pp.~19--76,
  \url{https://doi.org/10.1007/BF02592205}.

\bibitem{Mather}
{\sc J.~Mather}, {\em Notes on topological stability}, Bull. Amer. Math. Soc.,
  49 (2012), pp.~475--506,
  \url{https://doi.org/10.1090/S0273-0979-2012-01383-6}.

\bibitem{Mehlitz}
{\sc P.~Mehlitz}, {\em Stationarity conditions and constraint qualifications
  for mathematical programs with switching constraints}, Math. Program., 181
  (2020), pp.~149--186, \url{https://doi.org/10.1007/s10107-019-01380-5}.

\bibitem{Mordukhovich1}
{\sc B.~S. Mordukhovich}, {\em Variational Analysis and Generalized
  Differentiation I: Basic Theory}, vol.~330 of Grundlehren Math. Wiss.,
  Springer-Verlag Berlin Heidelberg, 2006,
  \url{https://doi.org/10.1007/3-540-31247-1}.

\bibitem{Mordukhovich2018}
{\sc B.~S. Mordukhovich}, {\em Variational Analysis and Applications}, Springer
  Monogr. Math., Springer Cham, 2018,
  \url{https://doi.org/10.1007/978-3-319-92775-6}.

\bibitem{MoreSorensen}
{\sc J.~J. Moré and D.~C. Sorensen}, {\em Computing a trust region step}, SIAM
  J. Sci. Stat. Comput., 4 (1983), pp.~553--572,
  \url{https://doi.org/10.1137/0904038}.

\bibitem{Nesterov2018}
{\sc Y.~Nesterov}, {\em Lectures on Convex Optimization}, vol.~137 of Springer
  Optim. Appl., Springer, Cham, 2nd~ed., 2018,
  \url{https://doi.org/10.1007/978-3-319-91578-4}.

\bibitem{OlikierGallivanAbsil2023}
{\sc G.~Olikier, K.~A. Gallivan, and P.-A. Absil}, {\em First-order
  optimization on stratified sets}.
\newblock Preprint, March 2023, \url{https://arxiv.org/abs/2303.16040v1}.

\bibitem{OlikierGallivanAbsil2024}
{\sc G.~Olikier, K.~A. Gallivan, and P.-A. Absil}, {\em Low-rank optimization
  methods based on projected-projected gradient descent that accumulate at
  {B}ouligand stationary points}.
\newblock Preprint, June 2024, \url{https://arxiv.org/abs/2201.03962v2}.

\bibitem{Pang}
{\sc J.-S. Pang}, {\em Partially {B}-regular optimization and equilibrium
  problems}, Math. Oper. Res., 32 (2007), pp.~687--699,
  \url{https://doi.org/10.1287/moor.1070.0262}.

\bibitem{PangFukushima}
{\sc J.-S. Pang and M.~Fukushima}, {\em Complementarity constraint
  qualifications and simplified {B}-stationarity conditions for mathematical
  programs with equilibrium constraints}, Comput. Optim. Appl., 13 (1999),
  pp.~111--136, \url{https://doi.org/10.1023/A:1008656806889}.

\bibitem{PangRazaviyaynAlvarado}
{\sc J.-S. Pang, M.~Razaviyayn, and A.~Alvarado}, {\em Computing {B}-stationary
  points of nonsmooth {DC} programs}, Math. Oper. Res., 42 (2017), pp.~95--118,
  \url{https://doi.org/10.1287/moor.2016.0795}.

\bibitem{Pauwels}
{\sc E.~Pauwels}, {\em A note on stationarity in constrained optimization}.
\newblock Preprint, February 2024, \url{https://arxiv.org/abs/2402.09831}.

\bibitem{Polak1971}
{\sc E.~Polak}, {\em Computational Methods in Optimization: A Unified
  Approach}, vol.~77 of Math. Sci. Eng., Academic Press, 1971.

\bibitem{Robinson1987}
{\sc S.~M. Robinson}, {\em Local structure of feasible sets in nonlinear
  programming, part {III}: Stability and sensitivity}, in Nonlinear Analysis
  and Optimization, B.~Cornet, V.~H. Nguyen, and J.~P. Vial, eds., vol.~30 of
  Mathematical Programming Studies, Springer Berlin Heidelberg, 1987,
  pp.~45--66, \url{https://doi.org/10.1007/BFb0121154}.

\bibitem{RockafellarWets}
{\sc R.~T. Rockafellar and R.~J.-B. Wets}, {\em Variational Analysis}, vol.~317
  of Grundlehren Math. Wiss., Springer-Verlag Berlin Heidelberg, 1998,
  \url{https://doi.org/10.1007/978-3-642-02431-3}.
\newblock Corrected 3rd printing 2009.

\bibitem{ScheelScholtes}
{\sc H.~Scheel and S.~Scholtes}, {\em Mathematical programs with
  complementarity constraints: Stationarity, optimality, and sensitivity},
  Math. Oper. Res., 25 (2000), pp.~1--22,
  \url{https://doi.org/10.1287/moor.25.1.1.15213}.

\bibitem{SchneiderUschmajew2015}
{\sc R.~Schneider and A.~Uschmajew}, {\em Convergence results for projected
  line-search methods on varieties of low-rank matrices via {{\L}}ojasiewicz
  inequality}, SIAM J. Optim., 25 (2015), pp.~622--646,
  \url{https://doi.org/10.1137/140957822}.

\bibitem{Scholtes}
{\sc S.~Scholtes}, {\em Convergence properties of a regularization scheme for
  mathematical programs with complementarity constraints}, SIAM J. Optim., 11
  (2001), pp.~918--936, \url{https://doi.org/10.1137/S1052623499361233}.

\bibitem{Scholtes2004}
{\sc S.~Scholtes}, {\em Nonconvex structures in nonlinear programming}, Oper.
  Res., 52 (2004), pp.~368--383, \url{https://doi.org/10.1287/opre.1030.0102}.

\bibitem{ScholtesStohr}
{\sc S.~Scholtes and M.~St\"{o}hr}, {\em Exact penalization of mathematical
  programs with equilibrium constraints}, SIAM J. Optim., 37 (1999),
  pp.~617--652, \url{https://doi.org/10.1137/S0363012996306121}.

\bibitem{SteffensenUlbrich}
{\sc S.~Steffensen and M.~Ulbrich}, {\em A new relaxation scheme for
  mathematical programs with equilibrium constraints}, SIAM J. Optim., 20
  (2010), pp.~2504--2539, \url{https://doi.org/10.1137/090748883}.

\bibitem{Tam2017}
{\sc M.~K. Tam}, {\em Regularity properties of non-negative sparsity sets}, J.
  Math. Anal. Appl., 447 (2017), pp.~758--777,
  \url{https://doi.org/10.1016/j.jmaa.2016.10.040}.

\bibitem{ThemelisStellaPatrinos}
{\sc A.~Themelis, L.~Stella, and P.~Patrinos}, {\em Forward-backward envelope
  for the sum of two nonconvex functions: Further properties and nonmonotone
  linesearch algorithms}, SIAM J. Optim., 28 (2018), pp.~2274--2303,
  \url{https://doi.org/10.1137/16M1080240}.

\bibitem{Varaiya}
{\sc P.~P. Varaiya}, {\em Nonlinear programming in {Banach} space}, SIAM J.
  Appl. Math., 15 (1967), pp.~284--293, \url{https://doi.org/10.1137/0115028}.

\bibitem{Willem}
{\sc M.~Willem}, {\em Functional Analysis: Fundamentals and Applications},
  Cornerstones, Birkh\"{a}user New York, 2013,
  \url{https://doi.org/10.1007/978-1-4614-7004-5}.

\bibitem{WuZhangZhang}
{\sc J.~Wu, L.~Zhang, and Y.~Zhang}, {\em Mathematical programs with
  semidefinite cone complementarity constraints: Constraint qualifications and
  optimality conditions}, Set-Valued Var. Anal., 22 (2014), pp.~155--187,
  \url{https://doi.org/10.1007/s11228-013-0242-7}.

\bibitem{Ye2005}
{\sc J.~J. Ye}, {\em Necessary and sufficient optimality conditions for
  mathematical programs with equilibrium constraints}, J. Math. Anal. Appl.,
  307 (2005), pp.~350--369, \url{https://doi.org/10.1016/j.jmaa.2004.10.032}.

\end{thebibliography}
\end{document}